\documentclass[12pt]{article}

\relpenalty=10000
\binoppenalty=10000










\usepackage{hyperref}
\usepackage{graphics}
\usepackage{graphicx}
\usepackage{amssymb, amsmath, amsfonts, amsthm, epsfig}
\usepackage{epsfig,latexsym}
\usepackage{multicol}
\usepackage{pgf,tikz}
\usetikzlibrary{arrows}
\textheight 8.5in \topmargin -0.25in \evensidemargin 0.25in \oddsidemargin
0.25in \textwidth 6in


\begin{document}
\sloppy

\newfont{\blb}{msbm10 scaled\magstep1}

\makeatletter
\newtheorem*{rep@theorem}{\rep@title}
\newcommand{\newreptheorem}[2]{%
\newenvironment{rep#1}[1]{%
 \def\rep@title{#2 \ref{##1}}%
 \begin{rep@theorem}}%
 {\end{rep@theorem}}}
\makeatother

\makeatletter
\newtheorem*{rep@prob}{\rep@title}
\newcommand{\newrepprob}[2]{%
\newenvironment{rep#1}[1]{%
 \def\rep@title{#2 \ref{##1}}%
 \begin{rep@prob}}%
 {\end{rep@prob}}}
\makeatother

\makeatletter
\newtheorem*{rep@conj}{\rep@title}
\newcommand{\newrepconj}[2]{%
\newenvironment{rep#1}[1]{%
 \def\rep@title{#2 \ref{##1}}%
 \begin{rep@conj}}%
 {\end{rep@conj}}}
\makeatother

\makeatletter
\newtheorem*{rep@cor}{\rep@title}
\newcommand{\newrepcor}[2]{%
\newenvironment{rep#1}[1]{%
 \def\rep@title{#2 \ref{##1}}%
 \begin{rep@cor}}%
 {\end{rep@cor}}}
\makeatother

\makeatletter
\newtheorem*{rep@prop}{\rep@title}
\newcommand{\newrepprop}[2]{%
\newenvironment{rep#1}[1]{%
 \def\rep@title{#2 \ref{##1}}%
 \begin{rep@prop}}%
 {\end{rep@prop}}}
\makeatother

\theoremstyle{plain}
\newtheorem{thm}{Theorem}[section]
\newtheorem{ques}[thm]{Question}
\newtheorem{lem}[thm]{Lemma}
\newtheorem{prop}[thm]{Proposition}
\newtheorem{cor}[thm]{Corollary}
\newtheorem{conj}[thm]{Conjecture}
\newtheorem{defn}[thm]{Definition}
\newtheorem{prob}[thm]{Problem}
\newtheorem*{rem}{Remark}
\newtheorem*{note}{Note}
\newreptheorem{theorem}{Theorem}
\newrepprob{prob}{Problem}
\newrepconj{conj}{Conjecture}
\newrepconj{cor}{Corollary}
\newrepprop{prop}{Proposition}

\newtheorem*{123}{1-2-3 Conjecture}
\newtheorem*{12}{1-2 Conjecture}
\newtheorem*{list123}{List 1-2-3 Conjecture}
\newtheorem*{list12}{List 1-2 Conjecture}
\newtheorem*{ACconj}{Additive Colouring Conjecture}
\newtheorem*{AClistconj}{Additive List Colouring Conjecture}
\newtheorem*{weakAClistconj}{Weak Additive List Colouring Conjecture}
\newtheorem*{TCC}{Total Colouring Conjecture}
\newtheorem*{LCC}{List Colouring Conjecture}
\newtheorem*{LLL}{Lov\'asz Local Lemma}
\newtheorem*{SLL}{Symmetric Local Lemma}
\newtheorem*{MLL}{Modified Local Lemma}
\newtheorem*{CN}{Combinatorial Nullstellensatz}
\newtheorem*{RyserFormula}{Ryser's Formula}

\newcommand{\lmulti}{\{\hspace{-0.035in}\{}
\newcommand{\rmulti}{\}\hspace{-0.035in}\}}
\newcommand{\per}{\textup{per}\,}
\newcommand{\mind}{\textup{mind}}
\newcommand{\tmind}{\textup{tmind}}
\newcommand{\pind}{\textup{pind}}
\newcommand{\ch}{\textup{ch}}
\newcommand{\M}{\mathbb{M}}
\newcommand{\N}{\mathbb{N}}
\newcommand{\Z}{\mathbb{Z}}
\newcommand{\R}{\mathbb{R}}
\newcommand{\Q}{\mathbb{Q}}
\newcommand{\C}{\mathbb{C}}
\newcommand{\F}{\mathbb{F}}
\newcommand{\pr}{\mathbf{P}}
\newcommand{\Ls}{\mathcal{L}}
\newcommand{\PP}{\mathcal{P}}
\newcommand{\D}{\Delta}
\newcommand{\tw}{\textup{\rm tw}}

\newcommand{\Se}{\chi_\Sigma^e}
\newcommand{\Pe}{\chi_\Pi^e}
\newcommand{\me}{\chi_m^e}
\newcommand{\se}{\chi_s^e}
\newcommand{\Sv}{\chi_\Sigma^v}
\newcommand{\Pv}{\chi_\Pi^v}
\newcommand{\mv}{\chi_m^v}
\newcommand{\sv}{\chi_s^v}
\newcommand{\St}{\chi_\Sigma^t}
\newcommand{\Pt}{\chi_\Pi^t}
\newcommand{\mt}{\chi_m^t}
\newcommand{\st}{\chi_s^t}
\newcommand{\chSe}{\ch_\Sigma^e}
\newcommand{\chPe}{\ch_ \Pi ^e}
\newcommand{\chme}{\ch_m^e}
\newcommand{\chse}{\ch_s^e}
\newcommand{\chSv}{\ch_\Sigma^v}
\newcommand{\chPv}{\ch_ \Pi ^v}
\newcommand{\chmv}{\ch_m^v}
\newcommand{\chsv}{\ch_s^v}
\newcommand{\chSt}{\ch_\Sigma^t}
\newcommand{\chPt}{\ch_ \Pi ^t}
\newcommand{\chmt}{\ch_m^t}
\newcommand{\chst}{\ch_s^t}

\newcommand{\eSe}{{\chi'}_\Sigma^e}
\newcommand{\ePe}{{\chi'}_\Pi^e}
\newcommand{\eme}{{\chi'}_m^e}
\newcommand{\ese}{{\chi'}_s^e}
\newcommand{\eSv}{{\chi'}_\Sigma^v}
\newcommand{\ePv}{{\chi'}_\Pi^v}
\newcommand{\emv}{{\chi'}_m^v}
\newcommand{\esv}{{\chi'}_s^v}
\newcommand{\eSt}{{\chi'}_\Sigma^t}
\newcommand{\ePt}{{\chi'}_\Pi^t}
\newcommand{\emt}{{\chi'}_m^t}
\newcommand{\est}{{\chi'}_s^t}
\newcommand{\echSe}{{\ch'}_\Sigma^e}
\newcommand{\echPe}{{\ch'}_ \Pi ^e}
\newcommand{\echme}{{\ch'}_m^e}
\newcommand{\echse}{{\ch'}_s^e}
\newcommand{\echSv}{{\ch'}_\Sigma^v}
\newcommand{\echPv}{{\ch'}_ \Pi ^v}
\newcommand{\echmv}{{\ch'}_m^v}
\newcommand{\echsv}{{\ch'}_s^v}
\newcommand{\echSt}{{\ch'}_\Sigma^t}
\newcommand{\echPt}{{\ch'}_ \Pi ^t}
\newcommand{\echmt}{{\ch'}_m^t}
\newcommand{\echst}{{\ch'}_s^t}

\newcommand{\tSe}{{\chi''}_\Sigma^e}
\newcommand{\tPe}{{\chi''}_\Pi^e}
\newcommand{\tme}{{\chi''}_m^e}
\newcommand{\tse}{{\chi''}_s^e}
\newcommand{\tSv}{{\chi''}_\Sigma^v}
\newcommand{\tPv}{{\chi''}_\Pi^v}
\newcommand{\tmv}{{\chi''}_m^v}
\newcommand{\tsv}{{\chi''}_s^v}
\newcommand{\tSt}{{\chi''}_\Sigma^t}
\newcommand{\tPt}{{\chi''}_\Pi^t}
\newcommand{\tmt}{{\chi''}_m^t}
\newcommand{\tst}{{\chi''}_s^t}
\newcommand{\tchSe}{{\ch''}_\Sigma^e}
\newcommand{\tchPe}{{\ch''}_\Pi ^e}
\newcommand{\tchme}{{\ch''}_m^e}
\newcommand{\tchse}{{\ch''}_s^e}
\newcommand{\tchSv}{{\ch''}_\Sigma^v}
\newcommand{\tchPv}{{\ch''}_\Pi ^v}
\newcommand{\tchmv}{{\ch''}_m^v}
\newcommand{\tchsv}{{\ch''}_s^v}
\newcommand{\tchSt}{{\ch''}_\Sigma^t}
\newcommand{\tchPt}{{\ch''}_\Pi ^t}
\newcommand{\tchmt}{{\ch''}_m^t}
\newcommand{\tchst}{{\ch''}_s^t}

\newcommand{\we}{\chi_{\sigma^*}^e}
\newcommand{\We}{\chi_\sigma^e}
\newcommand{\wt}{\chi_{\sigma^*}^t}
\newcommand{\Wt}{\chi_\sigma^t}
\newcommand{\chwe}{\ch_{\sigma^*}^e}
\newcommand{\chWe}{\ch_\sigma^e}
\newcommand{\chwt}{\ch_{\sigma^*}^t}
\newcommand{\chWt}{\ch_\sigma^t}

\newcommand{\Ie}{\chi_{\cap}^e}
\newcommand{\It}{\chi_{\cap}^t}
\newcommand{\chIe}{\chi_{\cap}^e}
\newcommand{\chIt}{\chi_{\cap}^t}

\newcommand{\KLT}{\textrm{Karo{\'n}ski-{\L}uczak-Thomason}}

\newcommand{\spc}{\hspace{0.08in}}

\renewcommand{\thepage}{\arabic{page}}


\title{The 1-2-3 Conjecture and related problems: a survey}

\author{Ben Seamone}

\maketitle

\begin{abstract}
The 1-2-3 Conjecture, posed in 2004 by Karo\'nski, {\L}uczak, and Thomason, states that one may weight the edges of any connected graph on at least 3 vertices from the set $\{1,2,3\}$ (call the weight function $w$) so that the function $f(v) = \sum_{u \in N(v)} w(uv)$ is a proper vertex colouring.  This paper presents the current state of research on the 1-2-3 Conjecture and the many variants that have been proposed in its short but active history.
\end{abstract}

\section{Introduction}\label{ch:intro:KLTintro}

Unless otherwise stated, a graph $G = (V,E)$ is simple, finite, and undirected.  Standard graph theory notation (\cite{BM08}, \cite{Diestel}) is used throughout.

In 2004, Karo{\'n}ski, {\L}uczak, and Thomason \cite{KLT04} made the following conjecture:

\begin{123}[Karo{\'n}ski, {\L}uczak, Thomason \cite{KLT04}]
If $G$ is a graph with no component isomorphic to $K_2$, then the edges of $G$ may be assigned weights from the set $\{1,2,3\}$ so that, for any adjacent vertices $u,v \in V(G)$, the sum of weights of edges incident to $u$ differs from the sum of weights of edges incident to $v$.
\end{123}

The motivation for the 1-2-3 Conjecture comes from the study of graph irregularity strength.  An edge weighting of a graph $G$ is an {\bf irregular assignment} if, for any pair of vertices $u, v \in V(G)$, the sum of weights of edges incident to $u$ differs from the sum of weights of edges incident to $v$.  The {\bf irregularity strength} of a graph $G$ is the smallest value of $k$ such that $G$ has an irregular assignment from $[k]$.

With a few simple definitions, we may state the 1-2-3 Conjecture more succinctly.  An {\bf edge $k$-weighting} is a function $w:E(G) \to [k] := \{1,2, \ldots, k\}$.  An edge $k$-weighting $w$ is a {\bf proper vertex colouring by sums} if $\sum_{e \ni u}w(e) \neq \sum_{e \ni v}w(e)$ for every $uv \in E(G)$.  Denote by $\Se(G)$ the smallest value of $k$ such that a graph $G$ has a edge $k$-weighting which is a proper vertex colouring by sums.  
A graph $G$ is {\bf nice} if no connected component is isomorphic to $K_2$.  The 1-2-3 Conjecture may now be stated as follows:

\begin{123}[Karo{\'n}ski, {\L}uczak, Thomason \cite{KLT04}]
If $G$ is nice, then $\Se(G) \leq 3$.
\end{123}

This paper surveys what is currently known about the 1-2-3 Conjecture, as well as the interesting questions (and answers) that have arisen during the conjecture's brief but active history.
Since the introduction of the 1-2-3 Conjecture, a variety of variations have been considered.  For instance, one may vary the set to be weighted or coloured (edges, vertices, or both), the operation by which one obtains a colour, or the set from which one weights the objects (a variation of interest is to study weightings from arbitrary lists rather than from a fixed set).  Due to the number of different weighting-colouring parameters one could conceivably study (and since there is no standard notation that is consistent across the literature on the subject), we have introduced the notation above, which is a modification of notation proposed by Gy{\H o}ri and Palmer \cite{GP09}.  Note the three components of $\Se(G)$ -- ``$\chi$'' indicates that we desire a proper {\em vertex} colouring, the superscript ``$e$'' indicates that the weighting of interest is of the {\em edges}, and the subscript ``$\Sigma$'' indicates that vertex colours are obtained by {\em summing} edge weights.  As new parameters are defined for different combinations of weightings and colourings, we will develop notation consistent with this format.

\section{Progress on the 1-2-3 Conjecture}

Early approaches to the 1-2-3 Conjecture focused on relating $\Se(G)$ to $\chi(G)$.  One of the first such results appears in the paper which introduces the 1-2-3 Conjecture:

\begin{thm}[Karo{\'n}ski, {\L}uczak, Thomason \cite{KLT04}]\label{odd}
If $(\Gamma,+)$ is a finite abelian group of odd order and $G$ is a nice $|\Gamma|$-colourable graph, then there is a weighting of the edges of $G$ with the elements of $\Gamma$ such that the vertex colouring by sums is a proper vertex colouring.
\end{thm}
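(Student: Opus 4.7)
The plan is to reduce the problem to a question of when a prescribed function $f \colon V(G) \to \Gamma$ arises as the vertex-sum of some edge weighting. Write $k = |\Gamma|$ and introduce the incidence operator $B \colon \Gamma^{E(G)} \to \Gamma^{V(G)}$ defined by $(Bw)(v) = \sum_{e \ni v} w(e)$. Finding the desired weighting $w$ is equivalent to exhibiting a proper $\Gamma$-coloring $f$ of $G$ that lies in $\mathrm{Im}(B)$, since then $f = Bw$ for some $w$ and $f$ is the vertex colouring by sums. Because $G$ is nice, I may treat each connected component separately, so assume $G$ is connected with $|V(G)| \geq 3$; the case $k=1$ is trivial, so assume $k \geq 3$.

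\textbf{Step 1: identify $\mathrm{Im}(B)$.} This is where the odd-order hypothesis does the real work. I would analyze $\ker(B^T) = \{x \in \Gamma^{V(G)} : x_u + x_v = 0 \text{ for every edge } uv\}$. If $G$ contains an odd cycle, walking around it forces $2x_v = 0$; because $|\Gamma|$ is odd, $2$ is invertible in each cyclic factor of $\Gamma$, so $x_v = 0$, and connectivity then propagates this to $x \equiv 0$. Hence if $G$ is non-bipartite, $B$ is surjective: $\mathrm{Im}(B) = \Gamma^{V(G)}$. If $G$ is bipartite with parts $A,B$, then $\ker(B^T)$ is generated by $\chi_A - \chi_B$, and a Smith-normal-form calculation carried out factor-by-factor over the primary decomposition of $\Gamma$ yields $\mathrm{Im}(B) = \{f \in \Gamma^{V(G)} : \sum_{v\in A} f(v) = \sum_{v\in B} f(v)\}$.

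\textbf{Step 2: choose a proper coloring $f$ in $\mathrm{Im}(B)$.} If $G$ is non-bipartite, any proper $k$-coloring $f$ of $G$ (which exists by the $|\Gamma|$-colorability hypothesis) already lies in $\mathrm{Im}(B)$. If $G$ is bipartite with parts $A,B$, then since $G \neq K_2$ I may assume $|A| \geq 2$. I would pick any $\beta \in \Gamma$, set $f \equiv \beta$ on $B$, and assign $f(u) \in \Gamma \setminus \{\beta\}$ to each $u \in A$ subject to $\sum_{u \in A} f(u) = |B|\beta$. This is possible: fixing $f$ on $|A|-2$ vertices of $A$ arbitrarily in $\Gamma \setminus \{\beta\}$ reduces the constraint to a two-variable equation $f(u_1) + f(u_2) = c$, and for any $c \in \Gamma$ there exist $a,b \in \Gamma \setminus \{\beta\}$ with $a+b=c$ whenever $k \geq 3$. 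The resulting $f$ is proper because $G$ is bipartite and $f(u) \neq \beta = f(v)$ for every edge $uv$ with $u \in A, v \in B$.

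The main obstacle is Step 1: making a rigorous description of $\mathrm{Im}(B)$ over a general finite abelian group rather than a field. The odd-order hypothesis is precisely what kills the $\Z/2\Z$ factor that otherwise appears as the last invariant factor of the Smith normal form of a non-bipartite incidence matrix; without it, the surjectivity step collapses. Step 2 is then comparatively elementary, but its bipartite subcase makes essential use of $G$ not being $K_2$, which explains why the ``nice'' hypothesis enters.
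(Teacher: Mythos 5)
Your proposal is correct, and it proves the theorem by a genuinely different (more algebraic) route than the original argument of Karo\'nski, {\L}uczak and Thomason, which this survey cites but does not reprove. You characterize the realizable sum-vectors as the image of the incidence map $B:\Gamma^{E}\to\Gamma^{V}$ and then pick a proper colouring inside that image; the key facts you assert are right: for a connected non-bipartite graph the Smith normal form of the unoriented incidence matrix has invariant factors $1,\dots,1,2$, so tensoring with an odd-order $\Gamma$ makes $B$ surjective, while for a connected bipartite graph the image is exactly $\{f:\sum_{A}f=\sum_{B}f\}$ (this part needs no oddness), and your counting argument producing a proper colouring meeting the balance condition when $|A|\ge 2$ and $|\Gamma|\ge 3$ is sound -- this is precisely where niceness is used, since $K_2$ is the only connected bipartite graph with an edge in which both parts are singletons. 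The one step to phrase carefully is the passage from $\ker(B^{T})=0$ to surjectivity of $B$: over a group that is not a field this is not formal linear algebra, but it does follow either from the Smith-normal-form computation you indicate (right-exactness of $-\otimes_{\Z}\Gamma$ applied to $\operatorname{coker}B$) or from Pontryagin duality, so the gap you flag is real but routine to close. By contrast, KLT establish the same two realizability facts by an elementary constructive argument (adjusting weights along walks, using a closed odd walk and the invertibility of $2$ to fix the last vertex in the non-bipartite case, and the balance obstruction in the bipartite case), which avoids any citation of integer matrix theory; your version buys a cleaner conceptual explanation of exactly where the odd-order hypothesis enters (it kills the $\Z/2\Z$ cokernel), at the cost of importing the SNF of incidence matrices or a duality argument.
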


In particular, if $G$ is nice and $k$-colourable for $k$ odd, then $\Se(G) \leq k$.  This theorem was then extended to the following:

\begin{thm}
If $G$ is 2-connected and $\chi(G) \geq 3$, then $\Se(G) \leq \chi(G)$.  In particular, for any integer $k \geq 3$ and nice graph $G$, the following hold:
	\begin{enumerate}
	\item \textup{(Karo{\'n}ski, {\L}uczak, Thomason \cite{KLT04})} if $G$ is $k$-colourable for $k$ odd, then $\Se(G) \leq k$;
	\item \textup{(Duan, Lu, Yu \cite{DLY})} if $G$ is $k$-colourable for $k \equiv 0 \pmod 4$, then $\Se(G) \leq k$;
	\item \textup{(Duan, Lu, Yu \cite{DLY})} if $\delta(G) \leq k-2$, then $\Se(G) \leq k$;
	\item \textup{(Lu, Yang, Yu \cite{LYY})} if $G$ is $2$-connected, $k$-colourable, and has $\delta(G) \geq k+1$ for $k \equiv 2 \pmod 4$, then $\Se(G) \leq k$.
	\end{enumerate}
\end{thm}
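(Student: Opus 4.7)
The plan is to prove each of the four items separately and then assemble them into the main inequality $\Se(G) \leq \chi(G)$. Item 1 is immediate from Theorem \ref{odd}: apply it with $\Gamma = \Z_k$ for $k$ odd, obtain a weighting valued in $\Z_k$, and lift to $[k]$ by replacing the residue $0$ with the representative $k$. The substantive work is in items 2 through 4, which handle the even case that Theorem \ref{odd} misses because $\Z_k$ then contains a non-trivial element of order $2$.

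For item 2, with $k \equiv 0 \pmod 4$, I would fix a proper $k$-colouring $c$ of $G$ and aim to build a weighting whose vertex sums agree with $c$ modulo $k$. Because $4 \mid k$, the order-$2$ subgroup of $\Z_k$ is generated by an element with a convenient integer representative, which allows us to correct the parity of vertex sums by rerouting weight along a short cycle while staying in $[k]$; an initial weighting matching $c$ modulo $k$ can be produced by an orientation argument, or alternatively via the Combinatorial Nullstellensatz. For item 3, I would pick a vertex $v$ of degree at most $k-2$, weight $G-v$ recursively, and then assign weights to the edges at $v$: each of the $d(v) \leq k-2$ neighbours imposes one forbidden value on $\sum_{e \ni v} w(e)$, and varying a single chosen edge at $v$ provides $k$ candidates for this sum, so an admissible choice exists. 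Care is needed because shifting a weight also affects a neighbour's sum, but this can be arranged in advance by committing the other incident edges first so that the neighbours are already properly coloured between themselves, reserving one edge purely to tune $v$.

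The hardest case is item 4, where $k \equiv 2 \pmod 4$ and $\delta(G) \geq k+1$: neither the group-theoretic trick of odd $k$, the $4 \mid k$ flexibility of item 2, nor the small-degree reduction of item 3 is available. The plan is to exploit 2-connectivity through an ear decomposition, handling ears of odd and even length asymmetrically so as to control the order-$2$ subgroup of $\Z_k$, while the density assumption $\delta(G) \geq k+1$ guarantees enough incident edges at every vertex to absorb any residual parity correction locally. Finally, to deduce the umbrella statement $\Se(G) \leq \chi(G)$ for 2-connected $G$ with $\chi(G) \geq 3$, set $k = \chi(G)$ and dispatch to item 1 for odd $k$, item 2 for $k \equiv 0 \pmod 4$, and items 3 or 4 for $k \equiv 2 \pmod 4$; the narrow intermediate range $\delta(G) \in \{k-1,k\}$ left uncovered by items 3 and 4 must be settled by a direct argument using the 2-connectivity hypothesis, which I expect to be the main sticking point of the write-up.
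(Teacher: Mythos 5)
There is no in-paper proof to compare against here: this theorem is a survey aggregation of results proved in \cite{KLT04}, \cite{DLY}, and \cite{LYY}, and the paper only cites those sources. Judged on its own, your item~1 is fine and is the standard derivation: apply Theorem~\ref{odd} with $\Gamma=\Z_k$ and choose integer representatives in $[k]$ (sending $0\mapsto k$), so that distinct sums in $\Z_k$ force distinct integer sums. Everything after that, however, is a plan rather than a proof, and where it is concrete it does not work.

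For item~3, the induction is not well-founded: $\delta(G)\le k-2$ guarantees only one low-degree vertex, and $G-v$ may have minimum degree $k-1$ or larger (and may fail to be nice), so ``weight $G-v$ recursively'' has no valid induction hypothesis. Moreover, re-attaching the $d(v)$ edges at $v$ shifts \emph{every} neighbour's sum, so conflicts can arise between a neighbour $u$ and $u$'s other neighbours in $G-v$; reserving one edge ``to tune $v$'' cannot repair those, and committing the other edges first does not make them go away. Note also that your sketch never uses $k$-colourability; if an argument of this kind worked, its $k=3$ instance would already settle the 1-2-3 Conjecture for all graphs with a degree-$1$ vertex, which is a strong hint that something is missing. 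Items~2 and~4 are only statements of intent (``rerouting weight along a short cycle'', ``ear decomposition handling parities''); these are exactly the technical cores of \cite{DLY} and \cite{LYY}. The real obstacle they overcome is the parity obstruction over $\Z_k$ for even $k$: since $\sum_{v}\sum_{e\ni v}w(e)=2\sum_e w(e)$, not every target residue assignment from a proper $k$-colouring is realizable, and circumventing the order-$2$ element of $\Z_k$ is precisely where the hypotheses $k\equiv 0\pmod 4$, $\delta(G)\le k-2$, or ($2$-connectivity plus $\delta(G)\ge k+1$) are used; your sketch does not engage with this. Finally, you concede that the range $\delta(G)\in\{k-1,k\}$ with $k\equiv 2\pmod 4$ is left open, so the headline inequality $\Se(G)\le\chi(G)$ for $2$-connected $G$ is not established by your write-up; closing that case is part of what the cited work actually accomplishes, and deferring it means the main statement remains unproved.
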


It is also shown in \cite{DLY} that $\Se(G) \leq \chi(G)$ for a connected graph $G$ if $|V(G)|$ is odd or if there is a proper $\chi(G)$-colouring where one colour class has even size.  

The most significant progress toward solving the 1-2-3 Conjecture is the establishment and improvement of constant bounds on $\Se(G)$ for every nice graph $G$.  The best known bound to date is as follows:

\begin{thm}[Kalkowski, Karo{\'n}ski, Pfender \cite{KKP1}]\label{sum5}
If $G$ is a nice graph, then $\Se(G) \leq 5$.
\end{thm}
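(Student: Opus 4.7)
The plan is to use a greedy, iterative construction that processes vertices one at a time while carrying a flexibility invariant. First I would fix an arbitrary vertex ordering $v_1, v_2, \ldots, v_n$ and attempt to assign each edge a weight in $\{1,2,3,4,5\}$ in stages, so that at the end the weighting is a proper vertex colouring by sums. The key design choice is to allow weights from a $5$-element set rather than a $3$-element set precisely in order to have a central value around which each edge can later be tweaked by $\pm 1$.

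The invariant I would maintain is that after $v_i$ has been processed, its final sum $s(v_i)$ is not yet fully determined but is pinned to a pair of consecutive integers $\{S_i,S_i+1\}$, and that the pinned pairs of adjacent processed vertices are mutually disjoint (so that however the pins resolve, adjacent sums will differ). The reason for pinning to two values instead of one is that later, when processing a forward neighbour $v_j$ of $v_i$, I want to keep the option of shifting the weight of edge $v_iv_j$ by one unit; such a shift moves $s(v_i)$ by $\pm 1$, and the two-value pin absorbs this movement without breaking any colouring constraint for $v_i$.

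Processing $v_i$ itself goes as follows. All edges $v_kv_i$ with $k<i$ already have provisional weights in a central range (say $\{2,3,4\}$) assigned at earlier steps; their $\pm 1$ tweak freedom belongs to $v_i$'s step and is used up now. On each edge $v_iv_j$ with $j>i$ I assign a provisional central weight, reserving a $\pm 1$ tweak for the step at which $v_j$ will be processed. Varying the still-available choices on $v_i$'s edges yields an interval of roughly $2d(v_i)+1$ candidate values for $s(v_i)$; among these I must find a consecutive pair $\{S_i,S_i+1\}$ that avoids the pinned pairs of the at most $d^-(v_i)$ already-processed neighbours. A counting argument of pigeonhole flavour — each forbidden pair blocks at most a constant-sized window of candidates — should show such a pair always exists.

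The main obstacle is getting this counting step to go through uniformly, particularly for the very last vertex $v_n$ (whose pin must collapse to a single value, since there are no later steps to absorb a tweak) and for vertices of small forward degree (where the achievable range is short). I expect to handle these edge cases by choosing the ordering carefully — for instance, ordering so that each vertex has at least one forward neighbour except $v_n$, and choosing $v_n$ to be a vertex whose neighbourhood leaves room for its pin to collapse — and by a slightly more careful accounting on the first and last steps. Verifying that every weight used throughout stays in $\{1,2,3,4,5\}$, and that the pins resolve consistently at the end to give a proper sum-colouring, then completes the argument and yields $\Se(G)\le 5$ for every nice $G$.
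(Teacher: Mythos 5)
First, a point of comparison: the survey does not actually prove this theorem. It only cites Kalkowski, Karo\'nski and Pfender and describes their argument in a single sentence (vertices processed in a linear order, with weights on edges to previously processed vertices adjusted to avoid conflicts). Your plan is in exactly that spirit, so the outline is the right one; the question is whether your specific invariant can be pushed through, and there it has a genuine gap at its central step.

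The problem is that the pinning invariant you propose makes the counting fail for many vertices, and no choice of ordering can repair it. If a processed vertex $v_j$ is pinned to $\{S_j,S_j+1\}$, then a later tweak of the edge $v_jv_i$ by one unit is admissible only in the single direction that keeps $s(v_j)$ inside its pin (a tweak of free sign would leave it, and two later neighbours tweaking the same way certainly would), so at the step for $v_i$ each back-edge contributes a binary, one-directional unit shift and each forward edge a provisional value from a three-element central range. The achievable current sums for $v_i$ therefore form an interval of only about $d^-(v_i)+2d^+(v_i)+1$ values, while keeping $v_i$'s pin disjoint from the pins of its $d^-(v_i)$ processed neighbours forbids up to $3d^-(v_i)$ placements. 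Your pigeonhole thus needs roughly $d^+(v_i)\ge d^-(v_i)$ at every step, which is impossible: the back-degrees and forward-degrees have equal totals over any ordering, and the last vertex has $d^+=0$ with $d^-$ equal to its full degree. So the edge cases you defer to ``choosing the ordering carefully'' are not edge cases at all; the invariant itself must be refined. This is precisely where the published proof does its real work: it confines each processed vertex to a two-element set of admissible sums spaced two apart and organizes the admissible later moves on back-edges (this is what the two weights beyond a central three-element range are really for) so that each previously processed neighbour rules out only a bounded number of the candidates that are genuinely available, uniformly including vertices with no forward neighbours. Until you specify an invariant with that property and verify the count at every vertex, including the last one, what you have is a plausible outline of the Kalkowski-style algorithm rather than a proof of the bound $\chi_\Sigma^e(G)\le 5$.
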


The proof of this bound involves an algorithmic argument; the graph's vertices are processed in a linear order and, at each step, some weights of edges incident to a vertex are adjusted so that there are no colouring conflicts with previously considered vertices.  
Addario-Berry, Dalal, McDiarmid, Reed, and Thomason \cite{ADMRT07} had previously shown that $\Se(G) \leq 30$ for any nice graph $G$, a bound which was then improved to $\Se(G) \leq 16$ by Addario-Berry, Dalal, and Reed \cite{ADR08}, then to $\Se(G) \leq 13$ by Wang and Yu \cite{WY08}, and then to $\Se(G) \leq 6$ by Kalkowski, Karo{\'n}ski, and Pfender \cite{KKP2}.

It is easily seen that there exist nice graphs for which two edge weights do not suffice to colour the vertices by sums (e.g. $K_3$, $C_6$), and hence the best possible constant bound for all nice graphs is the conjectured value of 3.  However, it is known that if $G$ is a random graph chosen from $\mathcal{G}_{n,p}$ for any constant $p \in (0, 1)$, 
then asymptotically almost surely $\Se(G) \leq 2$ \cite{ADR08}.  It is also known that $2$ edge weights suffice to properly colour vertices by sums for any digraph (in fact, a stronger result is known, which is stated in Theorem \ref{BGNdigraph}).

Clearly, $\Se(G) = 1$ if and only if adjacent vertices of $G$ always have different degrees.  
A classification of graphs for which $\Se(G) = 2$ does not yet exist, though some partial results are known.  Chang, Lu, Wu, and Yu \cite{CLWY} showed that $\Se(G) \leq 2$ if $G$ is bipartite and $d$-regular for $d \geq 3$.  Lu, Yu, and Zhang \cite{LYZ} proved that if $G$ is a nice graph which is either $3$-connected and bipartite or has minimum degree $\delta(G) \geq 8\chi(G)$, then $\Se(G) \leq 2$.
Davoodi and Omooni \cite{DO} claim to have recently proven that, for any two bipartite graphs $G$ and $H$, one has that $\Se(G \,\Box\, H) \leq 2$ if $G \,\Box\, H \neq K_2$.  Their manuscript also states that, for any two graphs $G$ and $H$, $\Se(G \,\Box\, H) \leq \max\{\Se(G), \Se(H)\}$ (though not stated in their paper, one should assume that both $G$ and $H$ are nice, or adopt the convention that $\Se(K_2) := \infty$).
Khatirinejad et al \cite{Ben1} show that $\Se(G) \leq 2$ if all cycles of $G$ have length divisible by $4$.  A {\bf generalized theta graph}, denoted $\Theta{_{(m_1,\ldots m_d)}}$ ($d \geq 3$) is a graph constructed from $d$ internally 
disjoint paths between distinct vertices, where the $i^{\rm th}$ path has of length $m_i$.
Khatirinejad et al \cite{Ben1} and Lu, Yang, and Zhang \cite{LYZ} show that $\Se(\Theta{_{(m_1,\ldots m_d)}}) = 2$ if and only if $\{m_1,m_2, m_3, \ldots m_d)\} \neq \{1, 4k_2 + 1, 4k_3+1, \ldots, 4k_d + 1\}$ for some set of positive integers $\{k_i \geq 1 \mid i=2,\ldots,d\}$.  
In fact, the {\em only} bipartite graphs for which it is known that $\Se(G) > 2$ are cycles $C_{4k+2}$ ($k \geq 1$), generalized theta graphs $\Theta{_{(1, 4k_2 + 1,4k_3+1 \ldots, 4k_d + 1)}}$, and an infinite subfamily of a class of bipartite graphs called polygon trees (Davoodi and Omoomi \cite{DO}).  The following problem remains open:

\begin{prob}
Characterize the set of graphs $\{G \mid \Se(G) \leq 2, \textrm{$G$ bipartite}\}$.
\end{prob}

Recalling Theorem \ref{odd}, one may consider the minimum $s$ such that, for any abelian group $(\Gamma,+)$ of order $s$, a graph $G$ has an edge weighting from $\Gamma$ which properly colours $V(G)$ by sums.  This parameter, called the {\bf group sum chromatic number} and denoted $\chi^{\Sigma}_g(G)$, was introduced by Anholcer and Cichacz \cite{AC1}, who prove the following theorem:

\begin{thm}[Anholcer, Cichacz \cite{AC1}]\label{groupsum}
If $G$ is a graph with no component having fewer than 3 vertices, then $\chi(G) \leq \chi^{\Sigma}_g(G) \leq \chi(G)+2$.
\end{thm}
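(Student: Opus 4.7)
The lower bound $\chi(G)\le\chi^{\Sigma}_g(G)$ is immediate: if $\Gamma$ is any abelian group of order $\chi^{\Sigma}_g(G)$ and $w:E(G)\to\Gamma$ realizes a sum-proper weighting, then the induced vertex sum function is a proper colouring of $G$ with at most $|\Gamma|=\chi^{\Sigma}_g(G)$ colours. For the upper bound, fix $k=\chi(G)$, let $\Gamma$ be an arbitrary abelian group of order $k+2$, and reduce (by treating components independently) to the case where $G$ is connected with $|V(G)|\ge 3$. If $k+2$ is odd then $G$ is $(k+2)$-colourable and Theorem \ref{odd} applied to $\Gamma$ produces the desired weighting immediately.

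The real work lies in the case when $k+2$ is even. The obstacle is that the doubling map $x\mapsto 2x$ on $\Gamma$ is no longer surjective, so one cannot invert $2$ as in the proof of Theorem \ref{odd}. My plan is to establish the following analogue: if $\Gamma$ has even order $s$ and $G$ is nice and $(s-2)$-colourable, then $G$ admits a sum-proper weighting from $\Gamma$. I would begin from a proper $(s-2)$-colouring $c_0:V(G)\to A$ with $A\subset\Gamma$, $|A|=s-2$, and let $\Gamma\setminus A=\{b_1,b_2\}$ denote the two free elements. I would then seek a target assignment $c:V(G)\to\Gamma$, agreeing with $c_0$ except at a few vertices reassigned to $\{b_1,b_2\}$, and construct a weighting $w$ such that $\sum_{e\ni v}w(e)=c(v)$ for every $v$. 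On a connected graph, such a $w$ exists precisely when $c$ lies in the image of the vertex-edge incidence map over $\Gamma$, which is governed by the condition $\sum_{v\in V(G)} c(v)\in 2\Gamma$ (together with one additional parity constraint when $G$ is bipartite); the two free elements $b_1,b_2$ are exactly what is needed to steer $\sum_v c(v)$ into $2\Gamma$.

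The main obstacle is this adjustment step: one must show that a vertex (or pair of vertices) always exists at which $c_0$ can be reassigned to $\{b_1,b_2\}$ without conflicting with any neighbour, and such that the induced change in $\sum_v c(v)$ corrects the parity obstruction. This is precisely where the hypothesis that each component has at least three vertices is used, since a component with only two vertices has just one edge and therefore no slack to redistribute; the bipartite case requires a short extra argument to handle the additional parity constraint across the two parts.
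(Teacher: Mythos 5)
Your lower bound is fine, but the even-order case of your plan rests on a claim that is false, and so is the weaker statement your strategy actually needs, namely that \emph{every} abelian group of order exactly $\chi(G)+2$ admits a sum-proper weighting. Take $G=K_6$ (nice, $\chi(G)=6$, so $(s-2)$-colourable for $s=8$) and $\Gamma=\Z_2\times\Z_2\times\Z_2$. For any edge weighting $w$ the vertex sums satisfy $\sum_{v}c(v)=2\sum_{e}w(e)=0$ in $\Gamma$, since every element has order at most $2$. But a proper colouring of $K_6$ assigns six pairwise distinct elements of $\Gamma$, and their sum equals $b_1+b_2$, where $b_1\neq b_2$ are the two unused elements (the sum of all eight elements of $\Gamma$ is $0$); in this group $b_1+b_2\neq 0$. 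So the necessary condition $\sum_v c(v)\in 2\Gamma$ that you correctly identified can never be met: no amount of cleverness in "steering'' with the two spare colours can help, because \emph{no} proper colouring of $K_6$ over this group is realizable. The same obstruction occurs for $K_{2^t-2}$ with $\Gamma=\Z_2^t$, $t\geq 3$. Hence the adjustment step you flag as the main obstacle is not just unproven — it is impossible in general, and the route "prove it for an arbitrary group of order $\chi(G)+2$'' cannot establish the theorem.

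The repair is to use the definition more carefully: $\chi^{\Sigma}_g(G)$ is the least $s$ such that every abelian group of that particular order $s$ works, so you are free to choose $s$ and need not take $s=\chi(G)+2$. By Theorem \ref{odd}, every abelian group of \emph{odd} order $s\geq\chi(G)$ admits the desired weighting of a nice graph $G$ (as $G$ is $s$-colourable); taking $s=\chi(G)$ when $\chi(G)$ is odd and $s=\chi(G)+1$ when $\chi(G)$ is even yields $\chi^{\Sigma}_g(G)\leq\chi(G)+1\leq\chi(G)+2$, which, together with your lower-bound argument, proves the statement as given in this survey (the cited paper's additional content is the finer analysis of even-order groups, which is exactly the territory where your lemma fails). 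Two smaller points: the hypothesis that each component has at least three vertices is not what makes your recolouring step work — a vertex moved to a spare colour never conflicts with neighbours, since spare colours are unused elsewhere — it is needed because a $K_2$ component admits no sum-proper weighting over any group; and in the bipartite case the realizability condition is $\sum_{x\in X}c(x)=\sum_{y\in Y}c(y)$, which is genuinely stronger than a single parity constraint.
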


Furthermore, the authors give a complete characterization of which graphs have group sum chromatic number $\chi(G)$, $\chi(G)+1$, and $\chi(G) + 2$.

We conclude this introduction to the 1-2-3 Conjecture with two related open problems.

\begin{prob}[Khatirinejad, Naserasr, Newman, Seamone, Stevens \cite{Ben1}]\label{unique3}
Does there exist a graph $G$ which has a unique edge $3$-weighting (up to isomorphism) which properly colours $V(G)$ by sums?
\end{prob}

We say that $G$ is {\bf $S$-weight colourable} if $G$ has an edge weighting from $S$ which properly colours $V(G)$ by sums, and $G$ is {\bf $k$-weight colourable} if $G$ is $S$-weight colourable for every set $S \subset \R$ of order $k$.
In \cite{Ben1}, gadget graphs are constructed which are uniquely $S$-weight colourable for any set $S$ of order $2$.  A positive answer to Question \ref{unique3} may similarly provide a class of gadget graphs useful for disproving the 1-2-3 Conjecture (and, hence, a negative answer would provide further evidence {\em for} the conjecture).

In \cite{Ben1}, it was also noted that it is unknown how difficult it is to decide if a given graph $G$ admits an edge $2$-weighting which properly colours $V(G)$ by sums (and, more generally, an edge $\{a,b\}$-weighting for general $a,b \in \R$).  

\begin{prob}[Khatirinejad, Naserasr, Newman, Seamone, Stevens \cite{Ben1}]
Is it NP-complete to decide whether or not a given graph $G$ is $2$-weight colourable?
\end{prob}

Havet, Paramaguru, and Sampathkumar \cite{HPS} have shown that it is NP-complete to decide if two edge weights suffice for a cubic graph $G$ if one requires distinct {\em multisets} of weights at adjacent vertices (more on such colouring variations can be found in the next section).  Dudek and Wajc \cite{DW} have recently proven that it is NP-complete to determine whether or not a graph $G$ is $S$-weight colourable for $S = \{1, 2\}$ and $S = \{0, 1\}$.  The general problem remains open, though they suggest that their methods may work for any set of two rational numbers.

\section{Variation I:  Colouring by products, multisets, sets, and sequences}\label{ch:intro:var1}

As mentioned in the introduction, there are a variety of ways in which one may modify the way in which vertex colours are obtained from an edge weighting of a graph.  The first variations we consider are those where addition of edge weights as the colouring method is replaced by another operation; in particular, we consider variations where colours are obtained by taking a product, multiset, set, or sequence of weights from edges incident to $v$ for each $v \in V(G)$.  
If such a colouring is proper, then the edge $k$-weighting of $G$ is a {\bf proper vertex colouring by products, multisets, sets, or sequences}, respectively.  Note that a colouring by sequences relies on a well-defined method for ordering the weights from edges incident to a vertex.  The natural way to do this is to first order the elements of $E(G)$, and then order the edges incident to a vertex according to the global ordering of $E(G)$. The smallest $k$ for which such colourings exist for a graph $G$ are denoted $\Pe(G)$ (products), $\me(G)$ (multisets), $\se(G)$ (sets), $\we(G)$ (sequences, where one can choose any ordering of $E(G)$), and $\We(G)$ (sequences, where a weighting must exist for any ordering of $E(G)$).  

Let us first consider colouring by multisets.  Clearly, two multisets are distinct if the sums of their elements are distinct, and hence colouring by multisets is a natural relaxation of the problem of colouring by sums.  The following simple observation is used frequently:

\begin{prop}\label{multibound}
If $\min\{\Se(G),\Pe(G),\se(G)\} \leq k$, then $\me(G) \leq k$.
\end{prop}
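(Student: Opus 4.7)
The plan is to show that any edge $k$-weighting which witnesses one of $\Se(G) \leq k$, $\Pe(G) \leq k$, or $\se(G) \leq k$ automatically witnesses $\me(G) \leq k$. This rests on the elementary observation that the multiset of weights at a vertex $v$ determines each of the three other colouring statistics: if $M_v$ denotes the multiset $\{\!\!\{w(e) : e \ni v\}\!\!\}$, then the sum $\sum_{e \ni v} w(e)$, the product $\prod_{e \ni v} w(e)$, and the underlying set of $M_v$ are all functions of $M_v$ alone.

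Accordingly, I would fix an edge $k$-weighting $w : E(G) \to [k]$ that realises whichever of $\Se(G), \Pe(G), \se(G)$ is at most $k$, and let $uv \in E(G)$ be an arbitrary edge. Writing things contrapositively, suppose $M_u = M_v$ as multisets; then immediately the sums at $u$ and $v$ agree, the products at $u$ and $v$ agree, and the supports of $M_u$ and $M_v$ agree. This contradicts whichever properness hypothesis we assumed on $w$, so in fact $M_u \neq M_v$ for every edge $uv$. Hence $w$ is a proper vertex colouring by multisets using weights from $[k]$, and $\me(G) \leq k$.

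There is no real obstacle here; the proposition is essentially a direct consequence of the fact that the multiset is the finest of these four invariants of the edge weights incident to a vertex, so a weighting that separates coarser invariants across any edge must separate multisets as well. The only thing to note in the write-up is that the three cases are handled by the same one-line argument, with only the invariant being read off from $M_v$ changing from case to case.
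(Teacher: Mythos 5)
Your argument is correct and is exactly the intended one: the paper states Proposition~\ref{multibound} as a simple observation without proof, and the implicit justification is precisely your point that the multiset of incident weights determines the sum, product, and set, so distinguishing any of the coarser invariants across an edge forces distinct multisets. Nothing is missing; the contrapositive write-up handles all three cases uniformly, as you note.
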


It follows from the 1-2-3 Conjecture and Proposition \ref{multibound} that the expected upper bound for $\me(G)$ is 3; whether or not this upper bound holds remains open.  The first bound on $\me(G)$ was established in \cite{KLT04}, where Karo{\'n}ski, {\L}uczak, and Thomason used a probabilistic argument to show that if $G$ is a nice graph, then $\me(G) \leq 183$.  By using the following vertex partitioning lemma, an improved bound of $\me(G) \leq 4$ was proven in \cite{AADR05}:

\begin{lem}[Addario-Berry, Aldred, Dalal, Reed \cite{AADR05}]\label{vpart}
Let $G$ be a connected graph which is not $3$-colourable.  There exists a partition of $V(G)$ into sets $V_0, V_1, V_2$ such that there exists a weighting $w:E(G) \rightarrow \{c_0, c_1, c_2, c^*\}$ with the following properties
	\begin{enumerate}
	\item for each $i \in \{0,1,2\}$, every $v \in V_i$ is incident to at least one edge weighted $c_i$;
	\item for each $i \in \{0,1,2\}$, the vertices in $V_i$ are incident only to edges weighted $c_i, c_{i-1 \pmod 3}, c^*$, and
	\item for each $i \in \{0,1,2\}$, if $u, v \in V_i$ are adjacent, then the number of edges incident to $u$ with weight $c_i$ is different from the number of edges incident to $v$ with weight $c_i$.
	\end{enumerate}
\end{lem}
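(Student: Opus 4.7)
My plan is to select a suitable 3-partition of $V(G)$ by an extremal argument and then construct the weighting around it. The key observation from condition~2 is that an edge between $V_i$ and $V_{i+1 \bmod 3}$ can be weighted only $c_i$ or $c^*$, while an edge inside $V_i$ can be weighted $c_i$, $c_{i-1}$, or $c^*$. Consequently the $c_i$-edges incident to a vertex $v \in V_i$ come only from edges inside $V_i$ and from $V_i$ to $V_{i+1}$. Write $H_i$ for this auxiliary subgraph; condition~1 demands that each $v \in V_i$ have positive degree in the chosen $c_i$-subset of $H_i$, and condition~3 demands that this subset induce distinct $c_i$-degrees on adjacent vertices of $G[V_i]$.

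I would choose $(V_0, V_1, V_2)$ so as to minimize the number of ``backward'' edges under the cyclic order, i.e., edges $uv$ with $u \in V_i$ and $v \in V_{i-1 \bmod 3}$. A local swap argument---move a single vertex to a different class and compare backward-edge counts---then forces each $v \in V_i$ to have at least as many neighbours in $V_i \cup V_{i+1}$ as in $V_{i-1}$. Since $G$ is connected no vertex is isolated, so each $v \in V_i$ has a neighbour in $V_i \cup V_{i+1}$, providing the slack needed to make $v$'s $c_i$-degree positive. The hypothesis $\chi(G) \geq 4$ enters because no 3-partition of $V(G)$ can be a proper colouring, so some class must contain internal edges; this ensures that the within-class structure is rich enough to allow degree distinctions.

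With the partition fixed, I would build the $c_i$-subset $F_i \subseteq E(H_i)$ class-by-class. Process the vertices of $V_i$ in a BFS order relative to $G[V_i]$; at each step, toggle the $F_i$-membership of a cross edge from the current vertex $v$ to $V_{i+1}$---such a toggle affects only $v$'s $c_i$-degree and not that of any already-processed neighbour inside $V_i$. A vertex $v$ with $d$ already-processed $V_i$-neighbours must avoid at most $d$ forbidden degree values while keeping degree at least $1$, so the cross-degree to $V_{i+1}$ suffices provided it is at least $d$. The main obstacle---and in my view the technical heart of the lemma---is a vertex whose cross-degree slack is insufficient relative to its internal degree in $G[V_i]$. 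Such a vertex must be handled either by locally re-weighting internal $c_i$-edges (updating affected neighbours' degrees consistently) or by re-choosing the partition to move $v$ elsewhere without worsening the extremal quantity. Proving that one of these recoveries is always available, using the extremal partition together with $\chi(G) \geq 4$, is where the real work of the proof lies.
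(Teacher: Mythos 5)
The survey does not prove this lemma --- it is stated with a citation to \cite{AADR05} and then used as a black box in the proof of Theorem \ref{me4} --- so there is no in-paper argument to compare routes with; your proposal has to stand on its own, and it does not. You say yourself that the case of a vertex whose cross-degree to $V_{i+1}$ is too small relative to its degree inside $G[V_i]$ ``is where the real work of the proof lies'' and that one must show some recovery (re-weighting internal $c_i$-edges or re-choosing the partition) ``is always available.'' That unresolved case is exactly the content of the lemma: conditions 1 and 3 force every vertex of $V_i$ to realize a positive $c_i$-degree that differs from all of its neighbours inside $V_i$, and it is precisely the vertices with many internal neighbours and little external slack that make this nontrivial. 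A greedy BFS toggle of cross edges (which is fine as far as it goes, since such toggles only affect $v$'s own $c_i$-count) simply does not reach these vertices, so what you have is a plan with the decisive step missing.

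There are also two concrete defects in the setup. First, the extremal quantity is degenerate: with three classes taken cyclically, every edge between two distinct classes $V_a$ and $V_b$ is a ``backward'' edge from the point of view of one of its endpoints, so the set you propose to minimize is just the set of all cross edges, and the minimum is attained by the trivial partition $V_0 = V(G)$. That choice makes your swap conclusion (``at least as many neighbours in $V_i \cup V_{i+1}$ as in $V_{i-1}$'') vacuously true while destroying the cross edges to $V_{i+1}$ on which your toggling step depends, so the extremal argument as stated does not produce a usable partition; you would need a genuinely asymmetric potential (or a different structural choice of $V_0,V_1,V_2$ altogether), together with a proof of how it feeds the degree-distinguishing step. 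Second, the role of $\chi(G) \geq 4$ is inverted in your sketch: internal edges of $G[V_i]$ are the \emph{constraint} (they are what forces adjacent vertices to receive distinct $c_i$-degrees), not a resource that ``allows degree distinctions,'' and nothing in your argument actually uses non-3-colourability in a load-bearing way --- note that the 3-colourable case is handled separately in the paper via Theorem \ref{odd}, so the hypothesis must be doing real structural work here (e.g., guaranteeing enough degree or cyclic adjacency structure), which your proposal never identifies.
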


\begin{thm}[Addario-Berry, Aldred, Dalal, Reed \cite{AADR05}]\label{me4}
If $G$ is a nice graph, then $\me(G) \leq 4$.
\end{thm}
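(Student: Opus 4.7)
The plan is to split on whether the nice graph $G$ is $3$-colourable, handling the easy case with earlier results and the hard case with Lemma \ref{vpart}. Since $\me(\cdot)$ can be computed componentwise, I may assume $G$ is connected and not $K_2$.

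If $G$ is $3$-colourable, then Theorem \ref{odd} applied with the cyclic group $\Z_3$ of order $3$ yields $\Se(G) \leq 3$, and Proposition \ref{multibound} gives $\me(G) \leq \Se(G) \leq 3 \leq 4$. That case is done.

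Otherwise $G$ is connected and not $3$-colourable, so Lemma \ref{vpart} applies. I take the resulting weighting $w : E(G) \to \{c_0, c_1, c_2, c^*\}$ and its associated partition $V(G) = V_0 \cup V_1 \cup V_2$, and verify that $w$ is a proper vertex colouring by multisets. Fix an edge $uv$ with $u \in V_i$ and $v \in V_j$. If $i = j$, property (3) of the lemma already says that $u$ and $v$ have a different number of incident edges of weight $c_i$, so their multisets of incident weights differ. If $i \neq j$, I use properties (1) and (2) together: by (2) the allowed weights at a vertex of $V_\ell$ are $\{c_\ell, c_{\ell-1 \bmod 3}, c^*\}$, so the three class-specific weights $c_0, c_1, c_2$ are each forbidden at exactly one of $V_0, V_1, V_2$. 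A short case check on the three unordered pairs $\{i,j\}$ shows that in each case there is an index $k \in \{i,j\}$ such that $c_k$ is an allowed weight in $V_k$ but forbidden in the opposite class. By property (1), the vertex of $\{u,v\}$ lying in $V_k$ is incident to at least one $c_k$-edge, while the other vertex is incident to none. Thus one multiset contains $c_k$ with multiplicity $\geq 1$ and the other with multiplicity $0$, so the two multisets differ.

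The substantive content is all packaged inside Lemma \ref{vpart}; the hard part has been done by Addario-Berry, Aldred, Dalal, and Reed. The only delicate point in the above argument is making sure the ``forbidden weight'' bookkeeping from property (2) is set up correctly, so that properties (1) and (3) then close the remaining cases without any further work.
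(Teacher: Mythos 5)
Your proof is correct and follows essentially the same route as the paper: split on $3$-colourability, use Theorem \ref{odd} with Proposition \ref{multibound} in the $3$-colourable case, and apply Lemma \ref{vpart} otherwise. The only difference is that you spell out the cross-class multiset check and the reduction to connected components, both of which the paper leaves implicit, and your bookkeeping there is accurate.
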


\begin{proof}
If $G$ is $3$-colourable, then $\me(G) \leq \Se(G) \leq 3$ by Theorem \ref{odd} and Proposition \ref{multibound}.  If $G$ is not $3$-colourable, then the edge-weighting guaranteed by Lemma \ref{vpart} gives the desired vertex colouring by multisets.
\end{proof}

Addario-Berry et al. \cite{AADR05} also use Lemma \ref{vpart} to prove that $\me(G) \leq 3$ if $G$ is nice and $\Delta(G) \geq 1000$.  It is also shown in \cite{HPS} that $\me(G) \leq 2$ if $G$ is cubic and bipartite.

Colouring vertices by sequences of edge weights is a natural further relaxation of the problem of colouring by multisets.  Given an ordering of $E(G)$ and a weighting of $E(G)$, let each vertex be coloured by the sequence obtained by taking the multiset of weights from incident edges and ordering these weights according to the order in which their corresponding edges appear.  Recall that $\we(G)$ (respectively, $\We(G)$) is the smallest $k$ so that, for some (resp., any) ordering of $E(G)$, an edge $k$-weighting exists which properly colours $V(G)$ by sequences in this way.  Clearly, $\we(G) \leq \We(G) \leq \me(G)$, and so $4$ edge weights suffice for both colouring by sequences variations.  The following are the best known bounds for $\we(G)$ and $\We(G)$:

\begin{thm}[Seamone, Stevens \cite{Ben-LLL}]\label{sequencebounds}
If $G$ is a nice graph, then
\begin{enumerate}
\item $\we(G) \leq 2$, and
\item $\We(G) \leq 3$ if $\delta(G) \in \Omega\left(\log{\Delta(G)}\right)$.
\end{enumerate}
\end{thm}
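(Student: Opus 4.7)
The two statements are naturally treated separately, because Part~(2) permits the ordering to be chosen adversarially while Part~(1) grants us the added freedom of choosing it ourselves.

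For Part~(2) I would run a direct application of the Lov\'asz Local Lemma. Fix any ordering $\pi$ of $E(G)$ and weight each edge independently and uniformly at random from $\{1,2,3\}$. For every edge $uv$ with $d(u)=d(v)=d$, let $A_{uv}$ be the bad event $\sigma_u=\sigma_v$; when $d(u)\neq d(v)$ the sequences have different lengths so no bad event arises. The event $A_{uv}$ amounts to $d$ coordinate-wise equalities $w(e_k^u)=w(e_k^v)$ among the $2d-1$ distinct edges in $E(u)\cup E(v)$ (only $uv$ is shared). Viewing these equalities as an auxiliary graph on $E(u)\cup E(v)$ and counting satisfying weightings by connected components gives $\Pr[A_{uv}]\le 3^{1-d}$, with the worst case occurring when $uv$ lies in the same position in both local lists. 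Since $A_{uv}$ is determined by the weights on $E(u)\cup E(v)$, it is mutually independent of all $A_{xy}$ with $\{x,y\}\cap (N[u]\cup N[v])=\emptyset$, so the dependency degree is at most $2\Delta^2$. The symmetric LLL condition $4\cdot 3^{1-d}\cdot 2\Delta^2\le 1$ reduces to $d\ge c\log_3\Delta$ for a universal constant $c$, which is exactly the hypothesis $\delta(G)\in\Omega(\log\Delta(G))$, finishing Part~(2).

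For Part~(1) the same random weighting from $\{1,2\}$ gives $\Pr[A_{uv}]\le 2^{1-d}$ and the same $O(\Delta^2)$ dependency, so LLL alone can only handle the regime $d\ge 2\log_2\Delta+O(1)$; we must therefore actively use the freedom to select $\pi$. My plan is to choose $\pi$ induced by an orientation of $G$ so that, for every edge $uv$ with $d(u)=d(v)$, the edge $uv$ occupies \emph{different} positions in the two local lists (this already drops $\Pr[A_{uv}]$ by a factor of $2$) and, more importantly, so that a structurally simple weighting --- for instance, assigning $1$ to every out-edge of a well-chosen orientation and $2$ to every in-edge --- renders $\sigma_u$ and $\sigma_v$ distinct automatically whenever the orientation separates $u$ and $v$ by some local invariant such as out-degree. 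The residual pairs, where even this construction fails, form a subgraph of bounded local complexity which one then handles either by a local perturbation of the weights (swapping a few $1$'s and $2$'s inside a single neighbourhood) or by a short induction that peels off a vertex of bounded degree, recursively colours the rest, and reinserts the vertex's incident edges at the appropriate positions of $\pi$.

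The main obstacle is exactly this low-degree portion of Part~(1): probabilistic arguments give no room when $d$ is bounded, and one has to argue in a fully combinatorial way that the orientation-induced ordering plus any surviving weight corrections never leaves two adjacent sequences coinciding. The analysis will have to enumerate the local configurations that can make a pair $(u,v)$ with $d(u)=d(v)$ still sequence-equivalent after the orientation step --- essentially short symmetric substructures around the edge $uv$ --- and exhibit a local modification of either $\pi$ or $w$ that destroys each such coincidence without creating a new one elsewhere. Once this local repair lemma is in place, stitching it together with the random/LLL part on the remainder of the graph yields $\we(G)\le 2$ for every nice $G$.
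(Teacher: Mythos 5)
Your Part~(2) is essentially the right argument and coincides with how the cited source proceeds: weight edges independently and uniformly from a $3$-element set, note that only edges $uv$ with $d(u)=d(v)=d$ give rise to bad events, bound $\Pr[A_{uv}]\le 3^{1-d}$ by counting components of the coordinate-equality constraint graph on the $2d-1$ edges of $E(u)\cup E(v)$ (worst case when $uv$ occupies the same position in both local lists), observe that $A_{uv}$ depends on at most $O(\Delta(G)^2)$ other events, and apply the symmetric Local Lemma; this is exactly the mechanism behind the explicit hypothesis $\delta(G)>\log_3(2\Delta(G)^2-2\Delta(G)+1)+2$ in the stronger list version, Theorem~\ref{listsequencebounds}, and your derivation of $\delta(G)\in\Omega(\log\Delta(G))$ is correct up to the unimportant constants.

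Part~(1), however, is where the actual content of the theorem lies, and your proposal does not prove it. The bound $\we(G)\le 2$ carries no degree hypothesis, so it must hold for graphs in which every conflicting pair has small equal degree (cycles already show that neither a union bound nor the Local Lemma can work with two weights: for $d=2$ the coincidence probability is $1/4$ and the dependency condition fails), and for this regime your plan consists of an unspecified orientation-induced ordering, a tentative weighting ($1$ on out-edges, $2$ on in-edges), and an unproven ``local repair lemma.'' None of these steps is carried out: you do not say which orientation or which local invariant separates the problematic pairs, and you do not show that the proposed local perturbations can be made consistently --- changing the weight or position of one edge alters the sequences of up to $2\Delta(G)$ vertices and can recreate coincidences at pairs already handled, and stitching such repairs together with a Local Lemma argument on the high-degree pairs is itself problematic because the two parts share edge variables, so the claimed independence structure is disturbed. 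In short, you have a correct proof of Part~(2) and a programme, not a proof, for Part~(1); the cited paper proves the stronger list statement $\chwe(G)\le 2$ for all nice graphs by actually exhibiting a suitable edge ordering and weighting scheme valid with no degree assumption, which is precisely the piece missing here.
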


In fact, stronger statements than those of Theorem \ref{sequencebounds} are proven in \cite{Ben-LLL}; see Theorem \ref{listsequencebounds}.  The constant suppressed by the $\Omega$ notation in the bound on $\delta(G)$ above can be improved if one imposes a girth condition on $G$.  Similar results for multigraphs are also proven in \cite{Ben-LLL}.

Turning our attention to colouring by products, we note that a constant bound on $\me(G)$ implies a constant bound on $\Pe(G)$.  The following corollary to the work of Addario-Berry et al. was first noted in \cite{SK08}:

\begin{cor}
If $G$ is a nice graph, then $\Pe(G) \leq 5$.
\end{cor}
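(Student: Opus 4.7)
The plan is to revisit the proof of Theorem~\ref{me4} and choose the four weights more carefully, so that products (not merely multisets) are distinguished at adjacent vertices. As in the proof of Theorem~\ref{me4}, I would split into two cases depending on whether $G$ is $3$-colourable.

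If $G$ is $3$-colourable, Theorem~\ref{odd} gives an edge weighting from $\{1,2,3\}$ that is a proper vertex colouring by sums; distinct sums force distinct multisets. I would then relabel via the bijection $1 \mapsto 2,\ 2 \mapsto 3,\ 3 \mapsto 5$, so the weights lie in $\{2,3,5\} \subset [5]$. Since $2,3,5$ are distinct primes, unique factorisation recovers the multiset of incident weights from its product, so distinct multisets at adjacent vertices give distinct products.

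If $G$ is not $3$-colourable, I would invoke Lemma~\ref{vpart} with the specific choice $c_0 = 2,\ c_1 = 3,\ c_2 = 5,\ c^* = 1$, so all edge weights lie in $\{1,2,3,5\} \subset [5]$. For adjacent $u,v \in V_i$, property~(2) makes each product of the form $c_i^{a} c_{i-1 \pmod 3}^{b}$; property~(3) forces the $c_i$-exponents to differ, so unique factorisation gives $P(u) \neq P(v)$. For adjacent $u \in V_i$ and $v \in V_j$ with $i \neq j$, property~(2) confines the prime divisors of $P(u)$ to $\{c_i, c_{i-1 \pmod 3}\}$ and those of $P(v)$ to $\{c_j, c_{j-1 \pmod 3}\}$, while property~(1) forces $c_i \mid P(u)$ and $c_j \mid P(v)$. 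A short case check on the three unordered pairs $\{i,j\}$ shows that the ``signature'' prime of one side is never available to the other, ruling out $P(u) = P(v)$.

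The only delicate point is the cross-part analysis: one must verify that the cyclic shift $c_{i-1 \pmod 3}$ in property~(2), combined with the guaranteed appearance of $c_i$ from property~(1), always leaves the prime $c_i$ (resp.\ $c_j$) missing from the allowed prime support on the other side. Once this is checked, every weight used lies in $\{1,2,3,5\} \subset [5]$, yielding $\Pe(G) \le 5$.
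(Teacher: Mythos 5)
Your proposal is correct and follows essentially the same route as the paper's proof: the same split on $3$-colourability, using Theorem~\ref{odd} together with the multiset observation and the prime relabelling $\{2,3,5\}$ in the first case, and Lemma~\ref{vpart} with $c_0=2$, $c_1=3$, $c_2=5$, $c^*=1$ in the second. The cross-part case check you flag does go through (for $i \neq j$ one of $c_i$, $c_j$ always lies outside the other vertex's allowed prime support), and the paper simply asserts this verification rather than spelling it out as you do.
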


\begin{proof}
If $G$ is $3$-colourable, then $\me(G) \leq 3$ by Theorem \ref{odd} and Proposition \ref{multibound}, and edge weights $\{2,3,5\}$ suffice.  If $G$ is not $3$-colourable, then let $w:E(G) \to \{c_0, c_1, c_2, c^*\}$ be the edge weighting guaranteed by Theorem \ref{vpart}, and let $c_0 = 2$, $c_1 = 3$, $c_2 = 5$ and $c^* = 1$; this edge $5$-weighting properly colours $V(G)$ by products.
\end{proof}

While constant bounds exist for $\Se(G)$, $\Pe(G)$ and $\me(G)$, this is not possible for the parameter $\se(G)$.  To see this, note that an edge $k$-weighting allows at most $2^k - 1$ possible vertex colours by sets and so the complete graph on $2^k$ vertices must have $\se(G) > k$. In the study of colouring by sets, Gy{\H o}ri and Palmer establish a link between $\se(G)$ and a hypergraph induced by $G$.  A hypergraph is said to have {\bf Property B} if there exists a colouring $c:V(H) \rightarrow \{1,2\}$ such that every hyperedge contains vertices of both colours; this concept has its roots in set theory and is due to Bernstein \cite{Bern}.

\begin{thm}[Gy{\H o}ri, Palmer \cite{GP09}]
Let $G$ be a bipartite graph with bipartition of the vertices $V(G) = X \cup Y$.  If $H$ is the hypergraph with $V(H) = Y$ and $E(H) = \{N_G(x) \,:\, x \in X\}$, then $\se(G) = 2$ if and only if $H$ has Property B.
\end{thm}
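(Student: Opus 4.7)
The statement is an iff, so I would handle the two directions separately. The ``if'' direction is a direct construction from the Property B witness; the ``only if'' direction requires a structural analysis of the witnessing weighting and is where I expect most of the work.

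For the ``if'' direction, let $c:Y \to \{1,2\}$ witness that $H$ has Property B. Since $G$ is bipartite, every edge has exactly one endpoint in each of $X$ and $Y$, so I may define $w:E(G) \to \{1,2\}$ by $w(xy) = c(y)$. For $x \in X$, the set of weights incident to $x$ is $\{c(y) : y \in N_G(x)\} = \{1,2\}$, because the hyperedge $N_G(x)$ of $H$ is non-monochromatic in $c$. For $y \in Y$, the set of weights incident to $y$ is $\{c(y)\}$, a singleton. Since every edge joins $X$ and $Y$, adjacent vertices always have distinct weight-sets, so $w$ is a proper vertex colouring by sets. This gives $\se(G) \leq 2$; combined with $\se(G) \geq 2$ (any constant weighting fails on a graph with edges), we conclude $\se(G) = 2$.

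For the ``only if'' direction, fix $w:E(G) \to \{1,2\}$ realizing $\se(G) = 2$, and for each vertex $v$ let $S(v)$ denote the set of weights on edges incident to $v$. Partition $Y$ into $Y_1, Y_2, Y_{12}$ according to whether $S(y) = \{1\}, \{2\}$, or $\{1,2\}$, and analogously $X = X_1 \cup X_2 \cup X_{12}$. A short case analysis using $w(xy) \in S(x) \cap S(y)$ together with $S(x) \neq S(y)$ forces a rigid structure: edges out of $X_i$ ($i \in \{1,2\}$) land only in $Y_{12}$; edges out of $X_{12}$ land only in $Y_1 \cup Y_2$; and every $x \in X_{12}$ has neighbours in both $Y_1$ and $Y_2$. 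Define $c(y) = i$ for $y \in Y_i$ ($i = 1,2$), and leave $c$ on $Y_{12}$ to be chosen. The structure already forces $N_G(x)$ to be non-monochromatic under $c$ whenever $x \in X_{12}$.

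The remaining task is to 2-colour $Y_{12}$ so that $N_G(x)$ is non-monochromatic for every $x \in X_1 \cup X_2$; equivalently, the sub-hypergraph $H'' = (Y_{12}, \{N_G(x) : x \in X_1 \cup X_2\})$ must have Property B. This is the main obstacle. On the bipartite subgraph induced by $X_1 \cup X_2 \cup Y_{12}$, every edge out of $X_1$ has weight $1$, every edge out of $X_2$ has weight $2$, and each $y \in Y_{12}$ is incident to edges of both weights; I would exploit this rigid edge-weight partition to construct the desired colouring of $Y_{12}$, either by picking for each $y \in Y_{12}$ a canonical ``witness neighbour'' via a fixed linear ordering of $X$ and setting $c(y)$ to be the weight of the corresponding edge, or by induction on $|Y_{12}|$, each step modifying $w$ to move a vertex out of $Y_{12}$ without destroying properness of the set colouring. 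Establishing Property B for $H''$ is the technical crux.
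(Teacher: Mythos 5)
Your ``if'' direction is correct and essentially the standard construction. The problem is the ``only if'' direction, and the gap you flag there is not merely technical: the intermediate claim you reduce to --- that the hypergraph $H''=(Y_{12},\{N_G(x):x\in X_1\cup X_2\})$ has Property B --- is false in general, so neither the ``canonical witness neighbour'' idea nor induction on $|Y_{12}|$ can close it. Already a degree-one vertex of $X$ kills it: for $P_3$ with $X$ the two leaves and $Y$ the centre, weighting the two edges $1$ and $2$ is a proper colouring by sets, yet $H$ consists of singleton hyperedges and admits no Property B colouring. The failure is not confined to degenerate degrees either: take $Y=\{a,b,c\}$ and let $X$ consist of three vertices with neighbourhoods $\{a,b\},\{b,c\},\{a,c\}$ together with a fourth vertex adjacent to all of $a,b,c$; weight every edge at the first three vertices $1$ and every edge at the fourth vertex $2$. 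This is a proper colouring by sets (the $X$-side receives $\{1\}$ or $\{2\}$, the $Y$-side receives $\{1,2\}$), so $\se(G)=2$, but a Property B colouring of $H$ would properly $2$-colour a triangle. So with the side $X$ prescribed in advance the implication you are trying to prove simply does not hold; it only becomes true if one is allowed to choose which class of the bipartition plays the role of $X$ (the survey, which quotes the theorem without proof, suppresses this point).

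The way to finish, once the statement is read with that freedom, is already hidden in your structural analysis, which gives more than you used: edges out of $X_1\cup X_2$ end in $Y_{12}$, edges out of $X_{12}$ end in $Y_1\cup Y_2$, and symmetrically edges out of $Y_1\cup Y_2$ end in $X_{12}$ while edges out of $Y_{12}$ end in $X_1\cup X_2$. Hence there is \emph{no} edge between $X_{12}\cup Y_1\cup Y_2$ and $X_1\cup X_2\cup Y_{12}$, so on each connected component exactly one regime occurs: either every $X$-vertex has set $\{1,2\}$ and every $Y$-vertex a singleton, or vice versa. In the first regime, assigning to each $y\in Y$ the common weight $c(y)$ of its incident edges is precisely a Property B colouring of $\{N_G(x):x\in X\}$, since $w(xy)=c(y)$ and $S(x)=\{1,2\}$ says $N_G(x)$ meets both colours; in the second regime it is the hypergraph built on the other side that acquires Property B --- exactly what the counterexamples above exhibit. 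So rather than trying to establish Property B for your $H''$ head-on (a dead end), the argument should record the componentwise dichotomy and attach the hypergraph to the side on which the singleton sets occur.
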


In \cite{GHPW08}, Gy{\H o}ri, Hor\u{n}\'ak, Palmer, and Wo\'zniak show that if $G$ is nice, then $\se(G) \leq 2\lceil{\log_2{\chi(G)}}\rceil + 1$, a bound which was subsequently refined.

\begin{thm}[Gy{\H o}ri, Palmer \cite{GP09}]\label{setbound}
If $G$ is nice, then $\se(G)  = \lceil{\log_2{\chi(G)}}\rceil + 1$.
\end{thm}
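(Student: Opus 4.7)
\noindent\textit{Proof plan.} The plan is to establish the upper bound $\se(G) \leq \lceil \log_2 \chi(G) \rceil + 1$ and the matching lower bound separately.

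For the lower bound I would argue as follows. Suppose $\se(G) \leq \ell$, witnessed by an edge $\ell$-weighting $w$ whose induced weight-sets $S_v \subseteq [\ell]$ properly colour $V(G)$. The assignment $v \mapsto S_v$ is a graph homomorphism from $G$ into the auxiliary graph $H_\ell$ with vertex set $2^{[\ell]} \setminus \{\emptyset\}$ in which distinct subsets $A, B$ are adjacent whenever $A \cap B \neq \emptyset$: adjacent $u, v$ have $S_u \neq S_v$ (by properness) and $w(uv) \in S_u \cap S_v$, so the intersection is nonempty. Hence $\chi(G) \leq \chi(H_\ell)$, and I would then compute $\chi(H_\ell) = 2^{\ell - 1}$. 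The clique formed by all subsets of $[\ell]$ containing any fixed element has size $2^{\ell - 1}$, so $\chi(H_\ell) \geq 2^{\ell - 1}$; conversely, pairing each proper nonempty subset $A \subsetneq [\ell]$ with its complement $[\ell] \setminus A$ gives $2^{\ell - 1} - 1$ independent pairs which, together with the singleton class $\{[\ell]\}$, partition $V(H_\ell)$ into $2^{\ell - 1}$ independent sets, so $\chi(H_\ell) \leq 2^{\ell - 1}$. Combining, $\chi(G) \leq 2^{\ell - 1}$, equivalently $\ell \geq \lceil \log_2 \chi(G) \rceil + 1$.

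For the upper bound, set $k = \lceil \log_2 \chi(G) \rceil + 1$ so that $2^{k-1} \geq \chi(G)$, and fix a proper $\chi(G)$-colouring $c$. Because the subsets of $[k]$ containing a fixed element form a clique of size $2^{k-1}$ in $H_k$, I can inject $[\chi(G)]$ into this clique, choosing a distinct nonempty $A_j \subseteq [k]$ with $1 \in A_j$ for each colour $j \in [\chi(G)]$. Setting $S_v := A_{c(v)}$ then already produces a valid assignment of ``target'' colour-sets in which adjacent vertices receive distinct subsets sharing the element $1$. What remains is to realise these targets via an edge $k$-weighting $w: E(G) \to [k]$: for each edge $uv$ the weight $w(uv)$ must lie in $A_{c(u)} \cap A_{c(v)}$ (which is nonempty), and one must additionally ensure that every element of $A_{c(v)}$ appears on some edge at $v$, so that the realised set at $v$ equals the target.

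The realisability step is the main obstacle: a uniform choice such as weighting every edge $1$ collapses every realised set to $\{1\}$, so more care is needed. My plan is to allow different vertices within the same colour class of $c$ to be mapped to different subsets from the clique, preserving the homomorphism property while using local degree and neighbourhood information to ensure each chosen target is actually realisable. A vertex-by-vertex inductive construction, processing $G$ in (say) a BFS ordering and maintaining at each step the invariant that the target at the current vertex equals the set of weights already assigned to its incident processed edges, should accomplish this. The genuinely delicate sub-cases arise in sparse low-degree configurations, where the size of a natural target may exceed a vertex's degree; handling these requires explicit local constructions and concentrates the technical work of the proof.
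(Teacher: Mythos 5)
This survey states the theorem without proof (it is quoted from \cite{GP09}), so your proposal can only be measured against the original argument and on its own merits. Your lower bound is correct and complete: the map $v \mapsto S_v$ is indeed a homomorphism into the graph $H_\ell$ on nonempty subsets of $[\ell]$ with adjacency given by nonempty intersection, and your evaluation $\chi(H_\ell)=2^{\ell-1}$ (the clique of sets through a fixed element, versus the partition into complementary pairs plus $\{[\ell]\}$) is right, yielding $\se(G)\geq \lceil\log_2\chi(G)\rceil+1$ for any graph with an edge. This is essentially the standard lower-bound argument.

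The upper bound, however, is where the entire content of Gy{\H o}ri and Palmer's theorem lies, and you have deferred exactly that step: choosing target sets per vertex is easy, but \emph{realizing} them by an edge weighting (so that every element of the target actually appears at the vertex, despite low-degree vertices whose degree may be smaller than the natural target size, and without creating equal sets at adjacent vertices when you start varying targets within a colour class) is the technical heart of \cite{GP09}, not a routine BFS induction; as written this is a genuine gap. Moreover, the gap cannot be closed in the generality you aim for, because the equality as stated fails for $\chi(G)=2$: by the Property B characterization quoted earlier in this survey, a bipartite graph has $\se(G)=2$ only when the associated neighbourhood hypergraph is $2$-colourable, and for instance the Heawood graph (the incidence graph of the Fano plane) has $\se(G)=3$ while the formula predicts $2$. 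Your own scheme exhibits the obstruction: with $k=2$ the only admissible targets are $\{1\}$, $\{2\}$, $\{1,2\}$, and realizing them is precisely the Property B problem, which is not always solvable. The actual result of \cite{GP09} asserts the equality for $\chi(G)\geq 3$ (bipartite graphs satisfy $\se(G)\in\{2,3\}$), so a correct write-up must treat the bipartite case separately and must supply the explicit realizability construction for $\chi(G)\geq 3$ that your plan postpones.
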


The parameter $\se(G)$ is denoted $\textup{gndi}(G)$ in \cite{GHPW08} and called the general neighbourhood distinguishing index.  The motivation for the parameter comes from the study of the {\bf neighbourhood distinguishing index}\label{ndidef} of a graph $G$, denoted $\textup{ndi}(G)$, which is the smallest integer $k$ such that $G$ has a proper edge $k$-colouring such that adjacent vertices have distinct sets of colours on their incident edges.  This parameter is also known as the {\bf adjacent vertex distinguishing chromatic index}, and the type of colouring is called an {\bf adjacent strong edge-colouring} or {\bf 1-strong edge-colouring}.  The neighbourhood distinguishing index was first introduced by Liu, Wang and Zhang \cite{LWZ}, who propose the following conjecture:

\begin{conj}[Liu, Wang, Zhang \cite{LWZ}]\label{ndi}
If $G \notin \{K_2, C_5\}$ and $G$ is connected, then $\Delta(G) \leq \textup{ndi}(G) \leq \Delta(G) + 2$.
\end{conj}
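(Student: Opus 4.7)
The lower bound is immediate: every adjacent vertex distinguishing proper edge colouring is in particular a proper edge colouring, so $\textup{ndi}(G) \geq \chi'(G) \geq \Delta(G)$. The heart of the conjecture is the upper bound $\textup{ndi}(G) \leq \Delta(G)+2$, and the exclusion of $C_5$ is necessary since a direct check gives $\textup{ndi}(C_5)=5=\Delta(C_5)+3$.

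My first attempt would be a Vizing-plus-recolouring strategy. By Vizing's theorem, $G$ admits a proper edge colouring $\phi$ with $\Delta(G)+1$ colours; working in the palette $[\Delta(G)+2]$ keeps one colour in reserve. For each adjacent pair $u,v$ with $S_\phi(u)=S_\phi(v)$ (writing $S_\phi(x)$ for the set of colours appearing at $x$), the plan is to perform a Kempe-type swap along an alternating two-coloured path, chosen so that one of the two colours lies in $S_\phi(u)\setminus S_\phi(v)$ after modification. Processing the conflict pairs in a carefully chosen order, say by a BFS order rooted at a persistent conflict, one would try to maintain an invariant (for example, that the graph of unresolved conflicts is a forest) and show by a monovariant argument that finitely many swaps resolve every conflict without reintroducing a previously fixed one. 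To gain traction on small cases, I would split off low-degree vertices via an induction on $|V(G)|$, deleting a vertex of minimum degree and extending the colouring from the reduced graph.

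The main obstacle, and surely the reason the conjecture has resisted proof, is the coupling between local modifications: a Kempe exchange designed to break the coincidence at $\{u,v\}$ can alter $S_\phi$ at several other vertices, possibly creating fresh conflicts at their neighbours. When $\Delta(G)$ is small and $G$ is close to regular, $|S_\phi(x)|$ is forced to be $\Delta(G)$ at every $x$, so the degrees of freedom for re-routing colours are scarce and a clean potential-function argument seems out of reach. A more promising route in the large-$\Delta$ regime is probabilistic: a Lov\'asz Local Lemma or entropy-compression argument in the spirit of Hatami gives bounds of the form $\Delta+O(1)$, and the dependency analysis is tractable because the event that two adjacent vertices share a colour set depends only on edges within distance two. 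I would expect the hard step to be squeezing the constant down to exactly $2$; this almost certainly requires combining the probabilistic bulk argument with a separate structural or discharging treatment of the tight configurations---short cycles, small cliques, and bipartite regular graphs---where the bound $\Delta+2$ is actually attained.
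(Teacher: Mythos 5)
The statement you were asked about is a \emph{conjecture} (Conjecture \ref{ndi}), and the paper does not prove it: it only observes that the lower bound $\Delta(G) \leq \textup{ndi}(G)$ is trivial and then surveys partial results --- Balister, Gy{\H o}ri, Lehel, and Schelp settle the bipartite and $\Delta(G)\leq 3$ cases and prove $\textup{ndi}(G) \leq \Delta(G) + O(\log\chi(G))$ in general, while Hatami obtains $\Delta(G)+300$ for $\Delta(G) > 10^{20}$. Your proposal does not close this gap, and you essentially say so yourself. What you actually establish is only the trivial part: $\textup{ndi}(G) \geq \chi'(G) \geq \Delta(G)$, plus the (correct) observation that $C_5$ must be excluded since $\textup{ndi}(C_5)=5$.

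For the upper bound, both of your routes stop short of a proof. The Vizing-plus-Kempe-swap plan hinges on ``maintaining an invariant'' and ``a monovariant argument'' that you never supply; the precise difficulty is the one you name --- a Kempe exchange that separates $S_\phi(u)$ from $S_\phi(v)$ changes the colour sets at every vertex along the alternating path, so conflicts can cascade, and no ordering of the conflict pairs or forest-type invariant is known that tames this, which is exactly why the conjecture is open. The probabilistic route is also not new ground: a Local Lemma / semi-random analysis in the spirit of Hatami is precisely how the $\Delta(G)+300$ bound in the paper is obtained, and there is no known way to compress the additive constant from a large absolute constant down to $2$; your closing sentence acknowledges that this ``hard step'' is missing. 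So the proposal is a reasonable research programme but not a proof, and it should not be presented as resolving the statement --- at best it re-derives the lower bound and recapitulates the known partial approaches cited in the survey.
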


While the lower bound is trivially true, the upper bound appears to be difficult to prove.  Here are a few known bounds for ${\rm ndi}(G)$:

\begin{thm}[Balister, Gy{\H o}ri, Lehel, Schelp \cite{BGLS}]
Conjecture \ref{ndi} holds if $G$ is bipartite or $\Delta(G) \leq 3$.
\end{thm}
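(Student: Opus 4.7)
The plan is to establish the two bounds in Conjecture \ref{ndi} separately in each case. The lower bound $\Delta(G) \leq \textup{ndi}(G)$ is automatic: any proper edge colouring requires at least $\Delta(G)$ colours. So the real work lies in showing the upper bound $\textup{ndi}(G) \leq \Delta(G) + 2$ for $G$ bipartite and for $G$ with $\Delta(G) \leq 3$.

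For the bipartite case, I would exploit K\"onig's edge colouring theorem: since $G$ is bipartite, $\chi'(G) = \Delta(G) =: \Delta$, so $E(G)$ decomposes into $\Delta$ matchings, yielding a proper edge $\Delta$-colouring $c$. With two extra colours held in reserve, the only adjacent pairs $u,v$ whose colour sets under $c$ coincide are those of equal degree using exactly the same subset of colours on their incident edges; the most problematic case is when both endpoints have degree $\Delta$, since then both colour sets equal $[\Delta]$. I would process such conflicts iteratively: for each conflicting pair, locate an edge incident to one of the endpoints to recolour using one of the two reserved colours, choosing the edge via an alternating-path or Kempe-chain argument that respects the bipartition so as not to re-introduce an old conflict.

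For the subcubic case, I would begin from a proper edge $(\Delta+1)$-colouring guaranteed by Vizing's theorem, using at most four colours, and keep a fifth colour in reserve. The advantage of small $\Delta$ is structural: every vertex has a colour set that is a subset of $\{1,\ldots,5\}$ of size at most three, so only a bounded number of local configurations can arise at a conflict. I would then argue by induction on $|E(G)|$, identifying a short list of reducible configurations (e.g., a vertex of small degree, or a pair of adjacent low-degree vertices), removing such a configuration, applying the inductive hypothesis to the smaller graph, and extending the colouring to the deleted edges using the reserve colour to break any residual conflict. The exceptions $K_2$ and $C_5$ would appear precisely as the base-case obstructions that cannot be reduced.

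The main obstacle in both cases is controlling cascades: when I recolour an edge $uv$ to break a conflict involving $u$, I may create a new conflict between $u$ and a different neighbour, or between $v$ and one of its own neighbours. Avoiding this requires either a careful global ordering of the recolouring operations, maintaining an invariant that already-fixed vertices remain distinguished, or a structural lemma guaranteeing that one of several possible recolouring moves always succeeds. I expect the bipartite case to admit a fairly clean treatment through the matching decomposition and alternating paths, whereas the subcubic case will hinge on a delicate but finite case analysis of the local structure around each reducible configuration.
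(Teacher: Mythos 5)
Your submission is a strategy outline rather than a proof, and the step it defers is exactly where the content of this theorem lives. (Note also that the survey itself gives no proof of this statement -- it is cited from Balister, Gy\H{o}ri, Lehel, and Schelp -- so the proposal has to stand on its own.) In the bipartite case, starting from a K\"onig proper $\Delta$-edge-colouring and ``fixing conflicts iteratively'' with two reserved colours is not an argument: in a $\Delta$-regular bipartite graph \emph{every} adjacent pair is initially in conflict, since every vertex sees the full palette $[\Delta]$, so the repair process must change the palette at essentially every vertex of the graph, not at a sparse set of trouble spots. You acknowledge the cascade problem (recolouring an edge to separate $u$ from one neighbour can merge the palettes of $u$ or $v$ with another neighbour) but offer no invariant, no ordering, and no termination or success guarantee; ``an alternating-path or Kempe-chain argument that respects the bipartition'' is a hope, not a lemma. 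The published proof of the bipartite bound $\textup{ndi}(G)\leq\Delta(G)+2$ does not work by locally patching a K\"onig colouring; the regular bipartite case in particular requires a genuinely global construction, and nothing in your sketch substitutes for it.

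The subcubic case has the same character of gap, only more so. ``Identifying a short list of reducible configurations'' and ``a delicate but finite case analysis'' is a description of what a proof would contain, not the proof: you name no configurations, give no extension argument showing that a colouring of the reduced graph can always be completed with five colours while keeping adjacent palettes distinct, and do not explain why the reserve colour suffices when the deleted edge's endpoints have degree-$3$ neighbours whose palettes are already forced. The actual argument for $\Delta(G)\leq 3$ in the cited paper is a long structural case analysis, and its difficulty is precisely in verifying that every local configuration admits a completion -- the part your plan leaves blank. One further small inaccuracy: $C_5$ is not an obstruction that ``appears in the base case'' of a subcubic induction in any automatic way; it is excluded because $\textup{ndi}(C_5)=\Delta(C_5)+3$, and your induction scheme would need to show explicitly that no other graph forces a sixth colour, which again is the whole theorem.
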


\begin{thm}[Balister, Gy{\H o}ri, Lehel, Schelp \cite{BGLS}]
If a graph $G$ is nice, then $\textup{ndi}(G) \leq \Delta(G) + O(\log{\chi(G)})$.
\end{thm}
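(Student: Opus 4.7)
The plan is to prove the bound by augmenting a Vizing edge colouring with a small ``tag'' palette that encodes a binary representation of a proper vertex colouring. First, apply Vizing's theorem to obtain a proper edge colouring $c_0 : E(G) \to \{1,\ldots,\Delta(G)+1\}$. Fix a proper vertex colouring $\phi : V(G) \to \{0,1,\ldots,\chi(G)-1\}$ and write $\phi(v)$ in binary as $(b_1(v),\ldots,b_k(v))$ with $k = \lceil \log_2 \chi(G)\rceil$. The idea is that, for each bit position $i$, a constant-sized fresh palette of ``bit-$i$ tag'' colours is used to recolour a small set of edges so that a vertex is incident to a bit-$i$ tag if and only if $b_i(v) = 1$.

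Concretely, for each $i \in \{1,\ldots,k\}$ I would select an edge set $F_i \subseteq E(G)$ that covers exactly $A_i := \{v \in V(G) : b_i(v) = 1\}$, and replace the $c_0$-colour of each $e \in F_i$ by a colour from the bit-$i$ tag palette chosen so that the restriction of the new colouring to $F_i$ is proper. Since the tag palettes are pairwise disjoint and disjoint from $\{1,\ldots,\Delta(G)+1\}$, the global colouring remains proper. For adjacent $u,v \in V(G)$, $\phi(u)\neq \phi(v)$ implies some bit $b_i$ differs, and then by construction exactly one of $u,v$ is incident to a bit-$i$ tag, so their incident colour sets differ. The total number of colours used is $\Delta(G)+1 + O(k) = \Delta(G) + O(\log \chi(G))$, as required.

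The main obstacle will be the construction of the edge covers $F_i$: for only $O(1)$ tag colours per bit to suffice, each $F_i$ must have its chromatic index bounded by an absolute constant, that is, $\Delta(F_i) = O(1)$. A naive pointwise choice --- pick for each $v \in A_i$ any incident edge --- can produce an $F_i$ with unbounded maximum degree at a ``popular'' neighbour of $A_i$. Overcoming this is the technical heart of the argument and likely requires a structural or probabilistic step (for instance, a Hall-type selection of covering edges at $A_i$ combined with a Vizing colouring of the resulting bounded-degree subgraph, or a Lov\'asz-Local-Lemma-based distribution of incidences). With such a construction, each $F_i$ is properly edge coloured with $O(1)$ fresh colours and the stated bound follows.
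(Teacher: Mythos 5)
There is a genuine gap, and it is more basic than the bounded-degree obstacle you flag at the end. Your scheme needs, for each bit $i$, an edge set $F_i$ such that the vertices incident to bit-$i$ tag colours are \emph{exactly} the vertices of $A_i=\{v : b_i(v)=1\}$; since every recoloured edge tags both of its endpoints, this forces $F_i\subseteq E(G[A_i])$. But $A_i$ is a union of colour classes of $\phi$, and nothing prevents a vertex of $A_i$ from having all of its neighbours in $V(G)\setminus A_i$; in the extreme case $\chi(G)=2$ the set $A_1$ is one side of the bipartition, $G[A_1]$ has no edges at all, and no admissible $F_1$ exists. Even when $G[A_i]$ has no isolated vertices the plan fails: if $G[A_i]$ is a star $K_{1,n}$, each leaf's only admissible covering edge goes to the centre, so all $n$ edges must lie in $F_i$, and they are pairwise adjacent, so no constant-size tag palette can recolour $F_i$ properly. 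Hence the ``incident to a bit-$i$ tag if and only if $b_i(v)=1$'' encoding cannot in general be realized by recolouring edges, and no Hall-type selection or Local Lemma step will rescue that exact-coverage requirement; the approach needs to be changed, not just tightened.

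For what it is worth, the survey states this theorem without proof, so the comparison is with the original argument of Balister, Gy\H{o}ri, Lehel and Schelp. There the logarithmic term also arises from a binary/halving idea, but it is implemented differently: one first establishes the bipartite case ($\textup{ndi}(G)\le\Delta(G)+2$), and then recursively splits the colour classes of a proper $\chi(G)$-colouring into two groups, spending a fresh constant-size block of colours at each of the $O(\log\chi(G))$ levels on the bipartite graph of edges crossing the split. The information distinguishing adjacent vertices is carried by which blocks of colours appear on crossing edges at each level of the recursion, not by tag edges inside the classes $A_i$ themselves, and this is precisely what avoids the obstruction described above. Reworking your argument along those lines (prove the bipartite case first, then recurse on the colour classes) is the natural way to repair it.
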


\begin{thm}[Hatami \cite{H05}]
If a graph $G$ is nice and $\Delta(G) > 10^{20}$, then $\textup{ndi}(G) \leq \Delta(G) + 300$.
\end{thm}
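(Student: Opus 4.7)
The plan is to use a probabilistic argument based on the Lov\'asz Local Lemma, starting from a near-optimal proper edge coloring. First I would take a proper edge coloring of $G$ with a palette of $\Delta(G)+300$ colors---obtained by extending a proper $(\Delta(G)+1)$-coloring given by Vizing's theorem---so that at every vertex $v$ the set $M(v)$ of colors missing at $v$ satisfies $|M(v)|\geq 300$. The goal is to randomly modify this coloring, while preserving properness, so that for every edge $uv$ the colour-sets $S(u)$ and $S(v)$ at the endpoints become distinct.

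A simple observation reduces the work: if $\deg(u)\neq\deg(v)$ then $|S(u)|\neq|S(v)|$, so $S(u)\neq S(v)$ automatically, and only edges between vertices of equal degree must be addressed. For such edges I would employ a localized random recolouring: at each vertex, randomly select a small subset of incident edges and reassign their colours via Kempe-chain swaps with randomly chosen missing colours, a modification that can be performed without violating properness. Declare $A_{uv}$ to be the bad event that after the modification $S(u)=S(v)$. Since the random choices at a vertex $v$ influence only edges within distance two of $v$, the events $A_{uv}$ and $A_{xy}$ are mutually independent whenever $uv$ and $xy$ lie at graph distance at least three, so each $A_{uv}$ depends on at most $O(\Delta^3)$ others.

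To apply the Local Lemma it suffices to show $\Pr(A_{uv})=O(\Delta^{-3})$. Because roughly 300 colours are free at each of $u$ and $v$, and the random swaps at $u$ and at $v$ act essentially independently, the probability that the resulting colour sets coincide decays faster than any fixed polynomial in $\Delta$: concentration inequalities of Chernoff or Talagrand type turn ``many incident edges available for swapping'' into a sharp tail bound on the cardinality $|S(u)\triangle S(v)|$ after the procedure, and a union bound over the few configurations in which the symmetric difference could collapse gives the required estimate. The threshold $\Delta(G)>10^{20}$ is precisely what is needed to make the resulting LLL inequality hold numerically with the constant 300.

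The main obstacle will be balancing two competing demands: achieving a small enough failure probability for each $A_{uv}$ requires sufficient randomness at every vertex, yet enlarging the random support widens the dependency graph and tightens the LLL condition. A separate difficulty is handling vertices of low degree, which have too few incident edges to generate enough randomness on their own; here one would likely reserve a part of the palette exclusively for low-degree vertices, or run a two-stage scheme that first resolves ``heavy--heavy'' edges probabilistically and then cleans up the remainder by a deterministic greedy pass using the still-unused colours. Hatami's large constant 300 together with the bound $\Delta(G)>10^{20}$ reflect exactly the slack consumed in making these two constraints simultaneously satisfiable.
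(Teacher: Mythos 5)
There is a genuine gap here — in fact two. First, your localization claim fails: Kempe chains are global objects. A Kempe-chain swap initiated at a vertex $v$ can run through the graph and recolor edges arbitrarily far from $v$, so it is simply not true that the random choices at $v$ ``influence only edges within distance two of $v$,'' and the asserted dependency bound of $O(\Delta^3)$ for the events $A_{uv}$ collapses. Any Local Lemma argument here must be built on a recoloring procedure whose randomness is genuinely local (Hatami's actual proof does exactly this: it uses a carefully designed local random recoloring with reserved colors and an uncoloring/completion phase, organized by degree classes, rather than Kempe swaps), and designing such a procedure while preserving properness is a large part of the work, not a detail.

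Second, the central estimate $\Pr(A_{uv})=O(\Delta^{-3})$ is asserted rather than proved, and the heuristic you give for it does not hold up. Having about $300$ missing colors at each endpoint does not make the collision probability ``decay faster than any fixed polynomial in $\Delta$'': for adjacent vertices of equal degree close to $\Delta(G)$, the sets $S(u)$ and $S(v)$ are determined by their small complements, the randomness you inject is only over ``a small subset of incident edges'' at each vertex, and the proper-coloring constraint correlates the two endpoints. Turning ``many edges available for swapping'' into a tail bound on $|S(u)\triangle S(v)|$ requires identifying the right random variables, verifying the Lipschitz/certifiability hypotheses for Talagrand-type concentration, and in Hatami's argument an iterated, multi-stage analysis rather than a one-shot Local Lemma; the constants $300$ and $10^{20}$ emerge from that bookkeeping and cannot be declared ``precisely what is needed'' without carrying it out. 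Your degree-reduction observation (only equal-degree adjacent pairs matter) and the overall semi-random framing are in the right spirit, but as written the proposal is a plan whose two load-bearing steps — locality of the randomness and the per-edge failure probability — are both unsupported, and the first is false for the mechanism you chose.
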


Dong and Wang \cite{DW12} have recently introduced the study of {\bf neighbour sum distinguishing colourings}, where a proper edge colouring is used to properly colour vertices by sums.  The smallest $k$ for which such an edge-colouring exists is denoted $\textup{ndi}_\Sigma(G)$; it is shown that $\textup{ndi}_\Sigma(G) \leq \max\{2\Delta(G) + 1, 25\}$ if $G$ is a planar graph and $\textup{ndi}_\Sigma(G) \leq \max\{2\Delta(G), 19\}$ if $G$ is a graph such that $\textup{mad}(G) \leq 5$ (where $\textup{mad}(G)$ is the maximum average degree taken over all subgraphs of $G$).

Finally, we present one further variation of neighbour-distinguishing edge weightings, introduced by Baril and Togni \cite{BT}.  A {\bf proper $k$-tuple edge-colouring} of a graph $G$ assigns a set of $k$ colours to each edge such that adjacent edges have disjoint colour sets.  The vertex version of such a colouring was introduced by Stahl \cite{Stahl}.  Let $S(v)$ denote the union of all sets of colours assigned to edges incident to $v \in V(G)$.  If $S(u) \neq S(v)$ for each $uv \in E(G)$, then the edge colouring is a {\bf $k$-tuple neighbour-distinguishing colouring}.  Baril and Togni determine the smallest $k$ for which various classes of graphs have $k$-tuple neighbour-distinguishing colourings.  They also make the following conjecture, which extends Conjecture \ref{ndi} to multigraphs:

\begin{conj}[Baril, Togni \cite{BT}]\label{multindi}
If $G$ is a connected multigraph, $G \neq C_5$, with edge multiplicity $\mu(G)$ and maximum degree $\Delta(G)$, then ${\rm ndi}(G) \leq \Delta(G) + \mu(G) + 1$.
\end{conj}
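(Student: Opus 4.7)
The plan is to imitate the $\Delta+2$-style recolouring arguments that prove Conjecture \ref{ndi} in the simple-graph cases ($\Delta\le 3$, bipartite), but to start from a proper edge colouring with $\Delta(G)+\mu(G)$ colours (guaranteed by Vizing's theorem for multigraphs) rather than from a $(\Delta+1)$-edge colouring. The extra $(\Delta+\mu+1)$-th colour is then used solely as a \emph{repair} colour: its only job is to destroy the neighbour pairs $u,v\in V(G)$ for which $S(u)=S(v)$. Before the main argument I would dispose of the small cases---components with $\Delta\le 2$ (paths and cycles, where $C_5$ is the known exception), and multigraphs of very small order---by direct verification or an easy adaptation of the simple-graph result.

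For the main step, fix a proper $(\Delta+\mu)$-edge colouring $c$ of $G$ and form the \emph{conflict graph} $H$ on $V(G)$, whose edges are those $uv\in E(G)$ with $S(u)=S(v)$. I would first bound $\Delta(H)$ in terms of $\mu$: $S(u)=S(v)$ is a very rigid condition on the local colour pattern around $\{u,v\}$, and analysing how such conflicts can propagate along a path $u\sim v\sim w$ in $H$ should limit its maximum degree. I would then edge-partition $H$ into a small number of matchings $M_1,\ldots,M_t$ and resolve the conflicts one matching at a time. For each conflict $uv\in M_i$, I would select a single edge $e_{uv}\in E(G)$ incident to exactly one of $u,v$ and recolour it with the repair colour $\Delta+\mu+1$; since this colour does not appear anywhere in the initial colouring, one such recolouring adds $\Delta+\mu+1$ to exactly one of $S(u),S(v)$ and kills the conflict.

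Two things must be checked when executing this step: the new edge colouring must remain proper, and no \emph{new} conflict may be created at $u$ or $v$ with some other neighbour. Propriety is easy when the repaired edges $\{e_{uv}:uv\in M_i\}$ are chosen pairwise non-adjacent, which is possible since $M_i$ is itself a matching. Avoiding new conflicts is a Hall-type selection problem at each vertex, and here the $+\mu$ slack in Vizing's bound is essential, since up to $\mu$ parallel edges to a common neighbour must be individually accounted for. I expect this local-resolution lemma to be the main obstacle, particularly at vertex pairs $\{u,v\}$ of multiplicity exactly $\mu(G)$, where nearly all the colour freedom has already been spent by the parallel edges themselves. Should the direct argument fail on these tight configurations, the natural fallback is a Lov\'asz Local Lemma argument applied to a uniformly random proper $(\Delta+\mu+1)$-edge colouring, in the spirit of Hatami's $\Delta+300$ bound recorded above.
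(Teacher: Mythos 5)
You have set out to prove something that the paper only records as an open conjecture: Conjecture~\ref{multindi} is due to Baril and Togni \cite{BT}, the survey offers no proof of it, and none is known. Note that the case $\mu(G)=1$ is exactly Conjecture~\ref{ndi}, i.e.\ the $\Delta(G)+2$ bound for simple graphs, which is itself open except for bipartite graphs and $\Delta(G)\le 3$ \cite{BGLS}, with only $\Delta(G)+O(\log\chi(G))$ \cite{BGLS} and $\Delta(G)+300$ for $\Delta(G)>10^{20}$ \cite{H05} known in general. So any argument of the kind you sketch would, as a special case, settle a well-known open problem; that alone should make you suspicious that the ``local-resolution lemma'' you defer is not a technicality but the entire difficulty.

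Concretely, the gaps are these. First, the step you flag as ``a Hall-type selection problem'' --- recolouring one edge at $u$ with the repair colour without creating a new coincidence $S(u)=S(w)$ for some other neighbour $w$ of $u$ --- is not carried out, and there is no reason a single spare colour suffices: changing $S(u)$ can create conflicts with any neighbour of $u$, including vertices repaired in earlier rounds, and controlling this cascade is precisely why even the $\Delta+2$ conjecture resists attack. Second, the repair colour is ``fresh'' only in the first round; once it has been used for the conflicts of $M_1$, it is present in the colouring when you process $M_2$, so both propriety and set-distinctness constraints involving colour $\Delta+\mu+1$ reappear, and two adjacent vertices may both end up with $\Delta+\mu+1$ in their sets, recreating a conflict. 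Third, the fact that $M_i$ is a matching in the conflict graph $H$ does not make the chosen repair edges pairwise non-adjacent in $G$: distinct conflict pairs $uv$ and $xy$ may select repair edges sharing their far endpoint, or the repair edge for $uv$ may even be incident to $x$. Finally, the Lov\'asz Local Lemma fallback ``in the spirit of Hatami'' cannot rescue the statement as claimed, since that method yields $\Delta$ plus a large additive constant and only for very large $\Delta$, nowhere near the exact bound $\Delta(G)+\mu(G)+1$. As it stands, your text is a plausible research programme, not a proof.
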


\section{Variation II:  Total and vertex weightings}\label{ch:intro:var2}

\subsection{Total weightings}

A {\bf total weighting} of a graph $G$ is an assignment of a real number weight to each $e \in E(G)$ and each $v \in V(G)$.  The concept of proper colourings induced by total $k$-weightings was introduced by Przyby{\l}o and Wozniak \cite{PW10}.  Given a total $k$-weighting of $G$, we consider vertex colourings obtained by taking either the sum, product, multiset, or set of weights taken from the edges incident to $v$ and from $v$ itself for each $v \in V(G)$.  If such a colouring is proper, then the total $k$-weighting of $G$ is a {\bf proper vertex colouring by sums, products, multisets or sets}, respectively.  The smallest values of $k$ for which a proper colouring of each type exists for a graph $G$ are denoted $\St(G)$, $\Pt(G)$, $\mt(G)$ and $\st(G)$, respectively.  Note that, while one must exclude graphs with connected components containing precisely one edge when considering edge weightings which properly colour vertices by sums, this exclusion is not necessary for total weightings.

We first observe that, for the colouring methods considered so far, any total weighting parameter is bounded above by its edge weighting counterpart:

\begin{prop}\label{totalboundbyedge}
If $G$ is any graph, then
	\begin{enumerate}
	\item \textup{(Przyby{\l}o, Wozniak \cite{PW10})} $\St(G) \leq \Se(G)$,
	\item \textup{(Skowronek-Kazi\'ow, \cite{SK08})} $\Pt(G) \leq \Pe(G)$,
	\item $\mt(G) \leq \me(G)$,
	\item $\st(G) \leq \se(G) =\lceil{\log_2{\chi(G)}}\rceil + 1$.
	\end{enumerate}
\end{prop}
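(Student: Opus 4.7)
The plan is to prove each of the four inequalities by taking an optimal edge $k$-weighting and extending it to a total $k$-weighting via a carefully chosen vertex-weight assignment, so that the colour at each vertex changes in a way that preserves the distinctness required on adjacent pairs. For each of (1)--(4), fix an edge $k$-weighting $w$ of $G$, with $k$ equal to the corresponding edge-parameter, that properly colours $V(G)$ by sums, products, multisets, or sets; I then need to exhibit a vertex-weight assignment $w'\colon V(G) \to [k]$ such that the resulting total $k$-weighting still properly colours $V(G)$ by the corresponding method.

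For (1)--(3) the uniform choice $w'(v) = 1$ for every $v$ does the job. It shifts the sum at each vertex by $+1$ (preserving sum-inequalities), multiplies the product at each vertex by $1$ (preserving product-inequalities), and adjoins the element $1$ to every multiset (preserving multiset-inequalities, since $A \sqcup \{1\} = B \sqcup \{1\}$ implies $A = B$ for multisets $A,B$). Hence the extended total weighting witnesses $\St(G) \leq \Se(G)$, $\Pt(G) \leq \Pe(G)$, and $\mt(G) \leq \me(G)$ respectively.

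The one case that requires a different idea is (4): adjoining a common element to two distinct sets can collapse them, for example $\{2\} \cup \{1\} = \{1,2\} = \{1,2\} \cup \{1\}$, so a uniform $w'$ can destroy set-distinctness. I would instead make a vertex-dependent choice: for each $v$ of positive degree, let $w'(v)$ be any element of $S(v) := \{w(e) : e \ni v\}$ (which is non-empty since $\deg(v) \geq 1$); isolated vertices may be given any weight in $[k]$, as they impose no adjacency constraint. Then the total set at $v$ equals $S(v)$, so distinctness on adjacent pairs is inherited directly from $w$, giving $\st(G) \leq \se(G)$; the equality $\se(G) = \lceil \log_2 \chi(G) \rceil + 1$ is then quoted from Theorem \ref{setbound}. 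There is no real obstacle: parts (1)--(3) are one-line observations, and the only subtlety is the non-uniform choice needed in (4). One minor bookkeeping point is that when $G$ has a $K_2$ component the right-hand sides of (1)--(4) are $\infty$ (since no edge weighting can separate the two endpoints of an isolated edge), so the inequality holds vacuously, consistent with the remark that the ``nice'' hypothesis is not needed for total weightings.
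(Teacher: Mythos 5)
Your proof is correct, and it is the natural argument: the paper explicitly omits the proofs of Proposition \ref{totalboundbyedge} ("the proofs, while not difficult, are omitted"), and extending an optimal edge weighting by constant vertex weights for sums, products, and multisets, with the vertex-dependent choice $w'(v)\in S(v)$ to avoid the set-collapsing issue in part (4), is exactly the intended kind of observation. Your handling of $K_2$ components and isolated vertices is also sound, so nothing is missing.
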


The proofs, while not difficult, are omitted.
In many cases, allowing vertex weights in addition to edge weights can strictly decrease the number of weights needed to obtain a proper vertex colouring of a graph.   The following conjecture motivates the study of total weightings and vertex colouring by sums:

\begin{12}[Przyby{\l}o, Wozniak \cite{PW10}] 
For every graph $G$, $\St(G) \leq 2$.
\end{12}

Hulgan, Lehel, Ozeki, and Yoshimoto \cite{HLOY} studied a variation of $\St(G)$ similar to that of the $S$-weight colourability variation of $\Se(G)$.  They consider weighting vertices and edges from a set of two non-negative real numbers $\{a,b\}$ and colouring vertices by sums; for some general values of $a,b \in \R$, they show that a variety of classes of graphs admit total weightings from $\{a,b\}$ which colour vertices by sums.

More generally, one may consider weightings which assign edges values from one set and vertices values from another.  For example, a {\bf total $(k,l)$-weighting} $w$ if a graph $G$ assigns $w(v) \in [k]$ for each $v \in V(G)$ and $w(e) \in [l]$ for each $e \in E(G)$.  The best known bound on $\St(G)$, to date, comes from the following result on total $(k,l)$-weightings:

\begin{thm}[Kalkowski \cite{Kal}]\label{12bound}
Every graph has a total $(2,3)$-weighting which is a proper vertex colouring by sums.  As a consequence, $\St(G) \leq 3$ for any graph $G$. 
\end{thm}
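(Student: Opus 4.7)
The plan is to adapt the algorithmic approach used to establish Theorem \ref{sum5}. Fix an arbitrary linear ordering $v_1, \ldots, v_n$ of $V(G)$, initialise every edge weight to $1$, and process the vertices in order. At step $i$ we will commit to the vertex weight $w(v_i) \in \{1,2\}$ and, for every forward neighbour $v_j$ (i.e.\ $j > i$), to the edge weight $w(v_i v_j) \in \{1,2,3\}$. Writing $S(u) := w(u) + \sum_{e \ni u} w(e)$ for the current weighted sum at $u$, the invariant to be maintained is that after step $i$, the sums $S(u)$ and $S(v)$ are distinct for every pair of adjacent already-processed vertices $u,v \in \{v_1, \ldots, v_i\}$.

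The key local observation driving the algorithm is that incrementing $w(v_i v_j)$ for a forward edge shifts $S(v_i)$ by the same amount while leaving $S(v_h)$ unchanged for every $h < i$ (since $v_i v_j$ is not incident to any such $v_h$); likewise the two choices for $w(v_i)$ shift $S(v_i)$ by $0$ or $1$. Hence the set of achievable values of $S(v_i)$ at step $i$ is a contiguous interval of at least $2 d^+(v_i) + 2$ integers, where $d^+(v_i)$ denotes the number of forward neighbours of $v_i$. The forbidden set $\{S(v_h) : h < i,\ v_h \sim v_i\}$ has size at most $d^-(v_i) = d(v_i) - d^+(v_i)$, so when $2 d^+(v_i) + 2 > d^-(v_i)$ the step succeeds directly.

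The main obstacle is the regime $2 d^+(v_i) + 2 \leq d^-(v_i)$, arising when $v_i$ has many backward neighbours and few forward ones — typical near the end of the ordering. To handle this, the algorithm must carry additional slack forward: rather than freezing $S(v_h)$ to a single integer at step $h$, one tracks a pair of admissible values, corresponding to a deferred choice at $v_h$ (for instance, of $w(v_h) \in \{1,2\}$ or of one weight on a designated edge incident to $v_h$) that may be resolved later. The reserved flexibility at each already-processed neighbour of $v_i$ can then be exploited during step $i$ to trim the effective forbidden set below $2 d^+(v_i) + 2$, guaranteeing a valid choice. The hardest part of the write-up is designing the update rule so that this slack is genuinely preserved for every processed vertex and can be consistently resolved at the end of the algorithm without re-introducing conflicts — precisely the delicate bookkeeping at the heart of Kalkowski's argument. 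Once the algorithm terminates, the resulting edge weights lie in $\{1,2,3\}$ and vertex weights in $\{1,2\} \subseteq \{1,2,3\}$, yielding both the total $(2,3)$-weighting and the consequence $\St(G) \leq 3$.
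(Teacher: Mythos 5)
The survey itself gives no proof of Theorem \ref{12bound} --- it is quoted from Kalkowski's paper, with only the one-sentence description of the algorithmic method accompanying Theorem \ref{sum5} --- so your proposal has to stand on its own, and as written it does not: it is an outline that stops exactly where the theorem becomes nontrivial. Your greedy count (an interval of $2d^+(v_i)+2$ achievable sums versus at most $d^-(v_i)$ forbidden values) only disposes of vertices with enough forward neighbours; for a vertex late in the ordering, and in particular the last one, $d^+(v_i)=0$ and the argument collapses, as you yourself note. Everything then hinges on the ``slack'' mechanism, and that is precisely what you leave unspecified: how each processed vertex retains a two-element set of admissible sums, why modifications of \emph{backward} edges within $\{1,2,3\}$ can always be made without pushing an earlier vertex out of its admissible set, why the number of positions available to $v_i$ always exceeds the number blocked by its processed neighbours, and why the deferred choices can be resolved at the end without creating new conflicts. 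Calling this ``the delicate bookkeeping at the heart of Kalkowski's argument'' is an accurate description of the missing content, but it means the proof is not there.

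Concretely, the standard argument initialises every edge at weight $2$ (not $1$), so that each backward edge can later be moved to $1$ or to $3$; when $v_i$ is processed, each already-processed neighbour $v_h$ carries a fixed pair of two consecutive admissible values containing its current sum $S(v_h)$, and on the edge $v_hv_i$ only the direction of change that keeps $S(v_h)$ inside its pair is permitted. Each backward edge therefore offers two options differing by one, so the achievable sums at $v_i$ form a run of roughly $d^-(v_i)+2$ consecutive integers once the choice $w(v_i)\in\{1,2\}$ is included, and a counting argument shows one can reserve for $v_i$ a pair of admissible values disjoint from the pairs of all its processed neighbours, which is the invariant that makes the final sums proper. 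Your sketch, which adjusts only forward edges at step $i$ and treats the repair of backward neighbours as a black box, never establishes such an invariant; supplying the pair-maintenance rule and the disjointness count is the actual proof of the theorem, not a routine detail to be filled in.
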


This theorem improves previous results from \cite{PW10}, where it is shown that $\St(G) \leq \min\{ \lfloor \chi(G)/2 \rfloor + 1, 11\}$ for any graph $G$, and from \cite{Prz08}, where it is shown that $\St(G) \leq 7$ if $G$ is regular.  
It is shown in \cite{PW10} that the 1-2 Conjecture holds if $G$ is complete, $3$-colourable, or $4$-regular.  

Little else is known when one varies the colouring operation.  Theorem \ref{12bound} implies that $\mt(G) \leq 3$ for any graph $G$.  The following conjecture is proposed on colouring by products from total $k$-weightings:

\begin{conj}[Skowronek-Kazi\'ow \cite{SK08}]\label{SKprod}
For every graph $G$, $\Pt(G) \leq 2$.
\end{conj}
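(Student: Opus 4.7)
The plan is to restrict to weights from $\{1,2\}$, reformulate the conjecture as a purely combinatorial counting statement, and then attempt a Kalkowski-style iterative construction.

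\textbf{Reformulation.}  For a total $\{1,2\}$-weighting $w$ of $G$, set
\[
k(v) \ := \ \bigl|\{x\in\{v\}\cup\{e\in E(G):e\ni v\}\ :\ w(x)=2\}\bigr|.
\]
The product of the weights incident with $v$ (including $w(v)$ itself) is then exactly $2^{k(v)}$, so the weighting is a proper vertex colouring by products if and only if $k(u)\neq k(v)$ for every edge $uv$.  Equivalently, writing $E[v]$ for the set of edges incident to $v$, we seek a subset $S\subseteq V(G)\cup E(G)$ such that the induced local count $k(v)=|S\cap(\{v\}\cup E[v])|$ is a proper vertex colouring.  Note that this is \emph{not} an immediate consequence of the 1-2 Conjecture: a sum-distinguishing $\{1,2\}$-weighting may still satisfy $k(u)=k(v)$ when $d(u)\neq d(v)$, since the sum at $v$ equals $d(v)+1+k(v)$.

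\textbf{Iterative construction.}  I would order $V(G)=\{v_1,\dots,v_n\}$ by breadth-first search from an arbitrary root, writing $d^{-}(v_i)$ and $d^{+}(v_i)$ for the number of neighbours of $v_i$ with smaller and larger index, respectively.  At step $i$, the values $w(v_j)$ and $w(v_j v_i)$ for $j<i$ are already tentatively fixed, while $w(v_i)$ and the weights of the edges $v_i v_j$ with $j>i$ are still free.  These $d^{+}(v_i)+1$ independent binary choices yield up to $d^{+}(v_i)+2$ achievable values of $k(v_i)$, and one need only avoid the $d^{-}(v_i)$ forbidden values $\{k(v_j):j<i,\,v_j v_i\in E(G)\}$.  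Whenever $d^{+}(v_i)\geq d^{-}(v_i)-1$ this is enough, so the main work concerns vertices processed late in the order.

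\textbf{Main obstacle.}  The construction is tightest exactly where it is most likely to fail: at a late-processed vertex $v_i$ with $d^{+}(v_i)=0$ and $d^{-}(v_i)=d(v_i)$ large, only two values of $k(v_i)$ are achievable against potentially many forbidden ones.  My proposed workaround is to allow \emph{retroactive corrections}: when processing $v_i$, flip the weight of an already-set edge $v_i v_j$ and simultaneously flip $w(v_j)$ so that $k(v_j)$ is unchanged.  The crux of the proof is then to verify that these local compensations do not cascade out of control --- for instance, by orienting $G$ along the processing order and exhibiting a potential function on partial weightings that strictly decreases with each correction, or by localising the changes to an auxiliary bipartite subproblem that can be solved by an augmenting-path argument.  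Controlling this cascade is where I expect the real difficulty to lie, and is precisely the feature that blocks a direct deduction of $\Pt(G)\leq 2$ from the stronger known bound $\St(G)\leq 3$ of Theorem~\ref{12bound}.
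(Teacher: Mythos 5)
The statement you are trying to prove is an open conjecture: the paper presents $\Pt(G)\leq 2$ as Skowronek-Kazi\'ow's conjecture and offers no proof, only the partial results that it holds for $3$-colourable and for complete graphs and that $\Pt(G)\leq 3$ in general. So there is no proof in the paper to match yours against, and your submission is explicitly a plan rather than a proof: you yourself flag that the cascade-control step is unresolved. As it stands, the argument has a genuine gap exactly there.

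Your reformulation is correct and worth keeping: with weights in $\{1,2\}$ the colour of $v$ is $2^{k(v)}$, so the problem is equivalent to choosing $S\subseteq V\cup E$ with $|S\cap(\{v\}\cup E[v])|$ proper, and you are right that this does not follow from sum-distinguishing $\{1,2\}$-weightings (nor from Theorem \ref{12bound}, whose $(2,3)$-weighting crucially uses three edge values). But the iterative scheme breaks down precisely where you say it does, and the proposed fix is weaker than it may appear. First, when you process $v_i$, the quantities $k(v_j)$ for earlier neighbours $v_j$ are not yet final (they still depend on edges from $v_j$ to vertices after $v_i$), so ``avoid the forbidden values $k(v_j)$'' is not well defined without a Kalkowski-style invariant that pins each processed vertex to a small set of possible final values; with only two weights per element it is not clear such an invariant can be maintained. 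Second, the retroactive correction ``flip $w(v_iv_j)$ and compensate by flipping $w(v_j)$'' is only available when $w(v_j)$ currently has the opposite value to the new edge weight, and each vertex weight is a single binary resource, so a late vertex with $d^{+}(v_i)=0$ and many backward neighbours may find most compensations already spent or pointing the wrong way. This is exactly the reason the known sum result needs edge lists of size $3$ (the $(2,3)$-weighting of Theorem \ref{12bound}): the third edge value is what lets one move an edge weight while keeping the earlier endpoint's colour inside its reserved two-element set. Until you exhibit a concrete potential function or matching argument that bounds the corrections, the plan does not yield $\Pt(G)\leq 2$; a more modest but attainable target along these lines would be to recover the known bound $\Pt(G)\leq 3$, or the $3$-colourable case via the group-theoretic argument behind Theorem \ref{odd}.
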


In \cite{SK08}, Conjecture \ref{SKprod} was verified for graphs which are 3-colourable or complete, and it was shown that $\Pt(G) \leq 3$ for every graph $G$.



On the topic of total weightings which properly colour vertices by sets, much more attention has been paid to weightings which are themselves proper total colourings.  Recall that, in a proper total colouring of $G$, adjacent vertices, adjacent edges, and incident vertices and edges must all receive different colours.  As an extension of the neighbourhood distinguishing index discussed in Section \ref{ch:intro:var1} (see page \pageref{ndidef}), Zhang et al \cite{CLL+} study proper total colourings which, considered as weightings, also properly colour $V(G)$ by sets (the set of weights consisting of the colour of the vertex itself as well as the colours on all incident edges).  The fewest number of colours required for such a colouring to exist for a graph $G$ is called the {\bf adjacent vertex distinguishing total chromatic number}, which is denoted $\chi'_{at}(G)$.  Since $\chi'_{at}(G) \geq \chi''(G)$, it follows immediately that $\chi'_{at}(G) \geq \Delta(G)+1$ for any graph $G$.  Conversely, it is conjectured that any graph can be so coloured using a constant number of colours greater than its maximum degree.

\begin{conj}[Zhang, Chen, Li, Yao, Lu, Wang \cite{CLL+}]\label{adjstrong}
For any graph $G$, $\Delta(G) + 1 \leq \chi'_{at}(G) \leq \Delta(G) + 3$.
\end{conj}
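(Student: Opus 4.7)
The lower bound $\Delta(G)+1 \le \chi'_{at}(G)$ is already observed in the discussion above, since any proper total colouring assigns distinct colours to a maximum-degree vertex and its $\Delta(G)$ incident edges. So the real target is the upper bound $\chi'_{at}(G) \le \Delta(G)+3$, and my plan is to split the problem by the size of $\Delta(G)$.

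For $\Delta(G)$ large, the natural approach is the Lov\'asz Local Lemma. First I would fix a proper total $(\Delta(G)+2)$-colouring of $G$, which is conjectured to exist by the Total Colouring Conjecture and is actually known to exist for $\Delta(G)$ sufficiently large (with a slight slack, which is where the extra colour gets used). Call the colour set $C$ with $|C| = \Delta(G)+3$. For each vertex $v$, let $S(v)$ be the set of colours on $v$ and its incident edges. The bad events are pairs $uv \in E(G)$ with $S(u)=S(v)$, and I would make $S$ random by re-randomising the choice of the vertex colour of each $v$ from the colours left available at $v$ (there are at least $3$ of them), plus, where needed, a local Kempe-swap of edge colours in a small neighbourhood of $v$. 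A direct counting estimate shows $\pr[S(u)=S(v)]$ is at most $O(\Delta(G)^{-1})$ once $|S(u) \triangle S(v)|$ has a constant chance of being nonempty, while the dependency of the event at $uv$ only involves edges and vertices at distance at most $2$, giving dependency degree $O(\Delta(G)^{3})$. These estimates are in the regime where the general (and, if necessary, asymmetric) Local Lemma applies, so for $\Delta(G)$ larger than some absolute constant $\Delta_0$, a valid colouring exists.

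For $\Delta(G) \le \Delta_0$ I would fall back on a case analysis driven by known partial results cited in the text: Conjecture \ref{adjstrong} is known for cubic and bipartite graphs, and for each specific small value of $\Delta$ one can adapt the swap-and-recolour arguments used by Hatami in Theorem \ref{ndi}'s neighbour-distinguishing analogue, reducing the task to checking finitely many obstruction configurations by hand or by computer.

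The main obstacle is unquestionably the tight constant $3$. A generic Local Lemma execution only yields $\chi'_{at}(G) \le \Delta(G) + C$ for some unspecified constant $C$, and pushing $C$ down to $3$ runs into the same rigid local configurations that plague the adjacent strong edge-colouring conjecture: two adjacent maximum-degree vertices $u,v$ sharing many common neighbours leave almost no freedom in the colours of the edges $uw$ and $vw$, so the set $S(u) \triangle S(v)$ is forced to live in a tiny pool. Discharging-style arguments, structurally analogous to those Hatami uses to shave additive constants, seem unavoidable at this step, and I would expect them to be by far the most delicate part of the proof.
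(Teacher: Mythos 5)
The statement you are trying to prove is not a theorem of this paper at all: it is an open conjecture (due to Zhang, Chen, Li, Yao, Lu, and Wang \cite{CLL+}), and the survey offers no proof of it. The strongest known result in its direction, which the paper explicitly cites, is that of Coker and Johannson \cite{CJ}: $\chi'_{at}(G) \leq \Delta(G) + C$ for some universal constant $C$, obtained by a probabilistic argument on top of the Molloy--Reed bound $\chi''(G) \leq \Delta(G) + 10^{26}$ \cite{MR}. Your sketch, by your own admission, lands exactly at ``$\Delta(G)+C$ for some unspecified constant'' and then defers the real content --- getting the constant down to $3$ --- to unspecified discharging arguments. That deferred step is precisely the open problem, so the proposal does not constitute a proof.

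Beyond that structural issue, several concrete steps would fail as written. First, you start from a proper total $(\Delta(G)+2)$-colouring and say one exists for large $\Delta$ ``with a slight slack''; in fact the Total Colouring Conjecture is open, and the known slack is the enormous additive constant $10^{26}$, not one extra colour, so your palette is already far larger than $\Delta(G)+3$. Second, your Local Lemma numerics do not close: with bad-event probability $p = O(\Delta^{-1})$ and dependency degree $d = O(\Delta^{3})$, the symmetric condition $e\,p\,(d+1) \leq 1$ fails badly ($p\,d \approx \Delta^{2}$), and resampling only the vertex colour from the $\geq 3$ available colours gives events that are nowhere near independent enough for the asymmetric version to rescue this without substantially more machinery (this is why \cite{CJ} is a nontrivial paper rather than a one-page Local Lemma application). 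Third, the fallback for $\Delta(G) \leq \Delta_0$ is not a finite verification: there are infinitely many graphs of each bounded maximum degree, the conjecture is only known for limited classes there, and ``adapt Hatami's swap-and-recolour arguments'' is not an argument --- Hatami's theorem itself only achieves $\Delta + 300$ for the edge version and only for huge $\Delta$. So the proposal identifies the right landscape of tools but contains no route to the conjectured bound $\Delta(G)+3$.
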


This conjecture is reminiscent of Conjecture \ref{ndi} on edge colourings and, more notably, the Total Colouring Conjecture.

\begin{TCC}[Behzad \cite{Beh} and Vizing \cite{Viz68}]
For any graph $G$, $\chi''(G) \leq \Delta(G) + 2$.
\end{TCC}

The best known bound on a graph's total chromatic number is due to Molloy and Reed \cite{MR}, who show that $\chi''(G) \leq \Delta(G) + 10^{26}$.  The Total Colouring Conjecture is notoriously difficult, which suggests that Conjecture \ref{adjstrong} may also be very difficult to solve.  However, Coker and Johannson \cite{CJ} have recently shown that there exists a universal constant $C$ such that $\chi'_{at}(G) \leq \Delta(G) + C$.  Their proof relies on a probabilistic argument to show that $\chi'_{at}(G) \leq \chi''(G) + C'$ for some other constant $C'$, then invokes Molloy and Reed's aforementioned bound on $\chi''(G)$.

Pil\'sniak and Wo\'zniak \cite{PW+} study a similar parameter, where one requires that a proper total colouring of $G$ distinguishes adjacent vertices by sums.  They too conjecture that $\Delta(G) + 3$ colours should suffice for any graph $G$; they verify their conjecture for complete graphs, cycles, bipartite graphs, cubic graphs and graphs with maximum degree at most three.

\subsection{Vertex weightings}\label{vertexweightings}

Having considered edge and total weightings, we now turn our attention to vertex weightings.  A {\bf vertex $k$-weighting} of a graph $G$ is a mapping $w:V(G) \to [k]$; we consider the usual four methods of obtaining a vertex colouring from a vertex $k$-weighting, taking either the sum, product, multiset, or set of weights from vertices adjacent to $v$ for each $v \in V(G)$.  If such a colouring is proper, then the vertex $k$-weighting of $G$ is a {\bf proper vertex colouring by sums, products, multisets or sets}, respectively.  The smallest values of $k$ such that a colouring of each type exists for a graph $G$ are denoted $\Sv(G)$, $\Pv(G)$, $\mv(G)$ and $\sv(G)$, respectively.

A vertex weighting of $G$ which is a proper vertex colouring by sums is also known as an {\bf additive colouring} of $G$ (initially called a {\bf lucky labelling} of $G$), studied in \cite{CGZ} by Czerwi\'nski, Grytczuk, and \.Zelazny.  The value $\Sv(G)$ is called the {\bf additive colouring number} of $G$ (denoted $\eta(G)$ in \cite{CGZ}).  The following conjecture is proposed:

\begin{ACconj}[Czerwi\'nski, Grytczuk, \.Zelazny \cite{CGZ}]\label{lucky}
For every graph $G$, $\Sv(G) \leq \chi(G)$.
\end{ACconj}

It is shown in \cite{ADKM} that it is NP-complete to determine if a graph $G$ has $\Sv(G) = k$ for any $k \geq 2$, and that it is still NP-complete if one considers the subproblem of determining if a $3$-colourable planar graph $G$ has $\Sv(G) = 2$.

As evidence for the Additive Colouring Conjecture, note the following bound on $\mv(G)$:

\begin{prop}[Czerwi\'nski, Grytczuk, \.Zelazny \cite{CGZ}; Chartrand, Okamoto, Salehi, Zhang \cite{COSZ}]\label{multilucky}
For any graph $G$, $\mv(G) \leq \chi(G)$.
\end{prop}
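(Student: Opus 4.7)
The plan is to exhibit an explicit vertex weighting using a proper $\chi(G)$-colouring itself and verify that the induced multiset colouring is proper. Concretely, let $c:V(G)\to[\chi(G)]$ be any proper vertex colouring, and define the vertex weighting $w := c$. For each $v\in V(G)$, let $M(v)$ denote the multiset $\lmulti w(u) : u\in N(v)\rmulti$ of weights of neighbours; this is the colour assigned to $v$ under the multiset variant.

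The key step is then to check that $M(u)\neq M(v)$ whenever $uv\in E(G)$. I would track the multiplicity of a single carefully chosen value, namely $c(u)$, in both multisets. Since $c$ is a proper colouring, no neighbour of $u$ has colour $c(u)$, so the multiplicity of $c(u)$ in $M(u)$ is exactly $0$. On the other hand, $u\in N(v)$, so $c(u)$ appears in $M(v)$ with multiplicity at least $1$. Hence $M(u)\neq M(v)$, and $w$ is a proper vertex colouring by multisets drawn from a set of size $\chi(G)$, giving $\mv(G)\leq \chi(G)$.

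There is no real obstacle here: the argument is a one-line verification once one realises that properness of $c$ itself guarantees the multiplicity-$0$ behaviour at $u$, while edge-adjacency automatically contributes multiplicity at least $1$ at $v$. The only thing to be mildly careful about is using the correct notion of equality of multisets (equality of multiplicity functions), so that the asymmetry in the multiplicity of $c(u)$ is a bona fide witness to $M(u)\neq M(v)$, rather than merely to a difference that might be cancelled out by some other coordinate.
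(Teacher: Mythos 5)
Your argument is correct and is essentially identical to the paper's proof: both use the proper $\chi(G)$-colouring $c$ itself as the vertex weighting and observe that for an edge $uv$ the value $c(u)$ lies in the neighbour multiset of $v$ but not in that of $u$, so the two multisets differ. Your explicit tracking of the multiplicity of $c(u)$ is just a slightly more detailed phrasing of the paper's observation that $c(u) \in m(v) \setminus m(u)$.
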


\begin{proof}
Let $k = \chi(G)$ and let $c: V(G) \rightarrow [k]$ be a vertex weighting which is a proper vertex colouring.  Let $m(u)$ denote the multiset of weights from the vertices in $N_G(u)$.  If $uv \in E(G)$, then $c(u) \in m(v) \setminus m(u)$.  Thus, $m(v) \neq m(u)$ for any $uv \in E(G)$ and so $c$ is a vertex weighting which properly colours $V(G)$ by sums.
\end{proof}

Note that the proof above actually shows the following:
\begin{prop}
For any graph $G$, $\sv(G) \leq \chi(G)$.
\end{prop}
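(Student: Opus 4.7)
The plan is to observe that the proof given for Proposition~\ref{multilucky} works verbatim when multisets are replaced by sets, because the distinguishing element identified in that argument (the weight $c(u)$ itself) is shown to lie in one set and not the other regardless of multiplicities. So I would simply reuse the same construction and verify the set-version of the same membership claim.

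Concretely, let $k=\chi(G)$ and let $c\colon V(G)\to[k]$ be a proper vertex colouring, viewed now as a vertex $k$-weighting. For each $v\in V(G)$, let $S(v)=\{\,c(x) : x\in N_G(v)\,\}$ be the set (not multiset) of weights on neighbours of $v$. Given any edge $uv\in E(G)$, the key observation is twofold: first, $c(u)\in S(v)$, because $u$ is a neighbour of $v$ contributing the value $c(u)$; second, $c(u)\notin S(u)$, because $c$ is a proper colouring, so no neighbour of $u$ shares the colour $c(u)$. Hence $S(u)\neq S(v)$, so $c$ is a vertex $k$-weighting which properly colours $V(G)$ by sets, giving $\sv(G)\leq k=\chi(G)$.

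There is really no obstacle here, which is exactly the authors' point in the remark preceding the statement: the argument used for multisets never uses cardinalities, only membership of the distinguished element $c(u)$, and that information is preserved when the multisets $m(u),m(v)$ are collapsed to their underlying sets $S(u),S(v)$. So the proof proposal is essentially a one-line verification that the witness $c(u)$ continues to witness $S(u)\neq S(v)$.
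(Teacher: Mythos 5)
Your proof is correct and matches the paper's own justification: the paper simply remarks that the membership argument from Proposition~\ref{multilucky} (namely $c(u)$ lies in the neighbour-colour collection of $v$ but not of $u$) already yields the set version, which is exactly what you verify.
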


Furthermore, it is shown in \cite{COZ} that, for any graph $G$, there exists a set $S_G$ of $\chi(G)$ integers such that there exists a vertex weighting $w: V(G) \rightarrow S_G$ which is an additive colouring.  This result is further extended in \cite{COZ2}, where it is shown that the size of the smallest set of reals such that $G$ has an additive colouring is exactly $\mv(G)$ for any connected graph $G$.

It is easily shown that no constant bound is possible for $\Sv(G)$ by noting that $\Sv(K_n) = n$ for any $n \geq 2$.  In \cite{BBC}, it is shown that $\Sv(G) \leq 468$ if $G$ is planar, $\Sv(G) \leq 36$ if $G$ is planar and $3$-colourable, and $\Sv(G) \leq 4$ if $G$ is planar and has girth at least $13$.  
Czerwi\'nski et al.~\cite{CGZ} prove a general upper bound on $\Sv(G)$ in terms of the acyclic chromatic number of $G$, denoted $A(G)$.

\begin{thm}[Czerwi\'nski, Grytczuk, \.Zelazny \cite{CGZ}]\label{luckybound}
For every graph $G$, $\Sv(G) \leq p_1\cdots p_r$ where $p_i$ denotes the $i^{\textup{th}}$ odd prime number and $r = {A(G) \choose 2}$.
\end{thm}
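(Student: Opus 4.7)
The plan is to exploit the structural decomposition provided by an acyclic colouring and combine local solutions via the Chinese Remainder Theorem. Fix an acyclic colouring $c:V(G)\to[k]$ with $k=A(G)$, and write $V_i=c^{-1}(i)$ for the colour classes. By definition of the acyclic chromatic number, for every pair $\{i,j\}\subseteq[k]$ the bipartite subgraph $F_{ij}:=G[V_i\cup V_j]$ is a forest. There are exactly $r=\binom{k}{2}$ such pairs; enumerate them and associate the $t$-th pair to the $t$-th odd prime $p_t$.

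The heart of the argument is the following reduction to a sub-problem on forests: for each forest $F_{ij}$ and its associated odd prime $p=p_{ij}$, find a weighting $w_{ij}:V(G)\to\mathbb{Z}/p\mathbb{Z}$, supported on $V_i\cup V_j$ (zero elsewhere), such that for every edge $uv\in E(F_{ij})$ one has $\sum_{x\in N_{F_{ij}}(u)}w_{ij}(x)\not\equiv\sum_{x\in N_{F_{ij}}(v)}w_{ij}(x)\pmod{p}$. Equivalently, the restriction of $w_{ij}$ to $V_i\cup V_j$ is an additive colouring of the forest $F_{ij}$ modulo $p$. Granting this sub-problem, the Chinese Remainder Theorem produces a single weighting $w:V(G)\to\{1,2,\ldots,p_1p_2\cdots p_r\}$ with $w(v)\equiv w_{ij}(v)\pmod{p_{ij}}$ for every pair $\{i,j\}$.

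To verify that $w$ is a proper additive colouring of $G$, take any edge $uv\in E(G)$ and let $i=c(u),\,j=c(v)$, which are distinct since $c$ is proper; thus $uv\in E(F_{ij})$. Modulo $p_{ij}$,
\[\sum_{x\in N_G(u)}w(x)\equiv\sum_{x\in N_G(u)}w_{ij}(x)=\sum_{x\in N_G(u)\cap V_j}w_{ij}(x)=\sum_{x\in N_{F_{ij}}(u)}w_{ij}(x),\]
where the middle equality uses that $w_{ij}$ vanishes outside $V_i\cup V_j$ together with $N_G(u)\cap V_i=\emptyset$ (since $c$ is proper). The analogous identity holds at $v$, so by the choice of $w_{ij}$ the two neighbourhood sums differ modulo $p_{ij}$ and hence differ in $\mathbb{Z}$, as required.

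The main obstacle is the forest sub-problem: every forest admits an additive colouring modulo any odd prime. A naive leaf-removal induction runs into trouble when the leaf $\ell$ has a high-degree neighbour $u$, for re-inserting $\ell$ forces the single weight $w(\ell)$ to avoid $\deg_F(u)+1$ forbidden residues at once, which may exceed the $p$ available ones. A robust route is the Combinatorial Nullstellensatz applied to $\prod_{xy\in E(F)}(s_x-s_y)$ over $\mathbb{F}_p$, where the challenge is to exhibit a monomial of total degree $|E(F)|$ with each variable exponent at most $p-1$ and nonzero coefficient; a careful orientation of the forest away from its leaves lets one produce such a monomial and evaluate its coefficient combinatorially. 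Alternatively, one may prove the stronger statement $\Sv(F)\leq 2$ for every forest $F$ directly, from which the modular conclusion follows at once by reading the values $1,2$ as residues modulo any odd prime $p\geq 3$.
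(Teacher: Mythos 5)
Your global architecture is the right one, and it is essentially the strategy of the cited proof: fix an acyclic colouring, observe that each pair of colour classes induces a forest, solve each forest modulo a distinct odd prime, and glue the solutions with the Chinese Remainder Theorem. Your verification of the glued weighting is also correct, including the two points that matter (that $w_{ij}$ must vanish outside $V_i\cup V_j$, and that $N_G(u)\cap V_i=\emptyset$ because the colouring is proper). The genuine gap is exactly at what you call the main obstacle: you never prove that every forest admits an additive colouring over $\mathbb{Z}/p\mathbb{Z}$ for every odd prime $p$, and neither of your two suggested routes closes it. The second route is a non sequitur: a weighting $w:V(F)\to\{1,2\}$ whose neighbourhood sums are distinct in $\mathbb{Z}$ need not have sums distinct modulo $p$, since sums can be as large as $2\Delta(F)$; once a degree reaches $p$, adjacent sums such as $2$ and $p+2$ collide mod $p$, and your CRT step genuinely needs the \emph{modular} inequality (the final comparison is made modulo $p_{ij}$, not in $\mathbb{Z}$). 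The first route is worse than unsubstantiated: applying the Combinatorial Nullstellensatz over $\mathbb{F}_p$ to $\prod_{xy\in E(F)}(s_x-s_y)$ provably cannot handle the star $K_{1,p}$, because there the polynomial is $\bigl(\sum_j z_{x_j}-z_u\bigr)^p\equiv \sum_j z_{x_j}^p - z_u^p \pmod p$, so every monomial with nonzero coefficient contains a variable with exponent $p$, which is never smaller than a list size inside $\mathbb{F}_p$; no choice of orientation or monomial can rescue this, even though the statement itself is true for stars.

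What is missing is an actual proof of the modular forest lemma, and it can be done elementarily (this is in the spirit of what Czerwi\'nski, Grytczuk and \.Zelazny do). Induct on a component tree by taking a penultimate vertex $v_1$ on a longest path, deleting \emph{all} of its leaf-children at once rather than a single leaf (this is precisely how one escapes the leaf-removal difficulty you describe), weighting the smaller forest by induction, and then choosing the weights of the deleted leaves so that their total $\sigma$ avoids at most two forbidden residues: writing $v_2$ for the non-leaf neighbour of $v_1$, one needs $w(v_2)+\sigma$ to differ from the (unchanged) sum at $v_2$ and from $w(v_1)$, i.e.\ $\sigma$ must avoid two residues, which is possible since $p\geq 3$; the deleted leaves themselves only require the sum at $v_1$ to differ from $w(v_1)$, which is the same condition. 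With that lemma supplied (together with the trivial base cases), the rest of your argument goes through verbatim and yields the stated bound.
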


Unfortunately, the bound given by Theorem \ref{luckybound} can grow large with $\Delta(G)$.  For instance, it is shown in \cite{AMR} that there exist graphs with maximum degree $\Delta(G) = \Delta$ for which $A(G) \in \Omega\left(\frac{\Delta^{4/3}}{(\log\Delta)^{1/3}}\right)$, and hence the bound in Theorem \ref{luckybound} is the product of $r \gg \Omega(\Delta^2)$ primes.  Much improved bounds in the more general setting of list-weightings are presented in Section \ref{additivelistsection}; in particular, we will see that $\Sv(G)  \in O(\Delta(G)^2)$ for every graph $G$.





As with the previously considered variations on the 1-2-3 Conjecture, one may vary the operation used to obtain a vertex colouring from a vertex weighting.  In \cite{COSZ}, Chartrand, Okamoto, Salehi, and Zhang study vertex weightings which properly colour vertices by multisets.  Aside from establishing the easy upper bound of $\chi(G)$ for $\mv(G)$, exact values of $\mv(G)$ are determined for bipartite graphs, complete multipartite graphs, and powers of cycles.

Colouring by sets are considered in \cite{CORZ} by Chartrand, Okamoto, Rasmussen, and Zhang, who show that 
$\sv(G)$ is bounded below by $\left\lceil \log_2(\chi(G)+1) \right\rceil$ and by $1 + \left\lceil \log_2 \omega(G) \right\rceil$.  Particular values are determined for some graph classes, and the effects of vertex and edge deletions are considered.  This parameter is closely related to a graph's {\bf locally identifying chromatic number}, which is the fewest number of colours needed to properly colour $V(G)$ such that the set of colours in $N_G[u]$ differs from the set of colours in $N_G[v]$ for any adjacent vertices $u, v$ with $N_G[u] \neq N_G[v]$.  This concept was introduced in \cite{Esp+} by Esperet et al., who show that there exist graphs which require $\Omega(\Delta(G)^2)$ colours.  They ask whether or not $O(\Delta(G)^2)$ colours suffices for every admissible graph; this was answered in the affirmative by Foucould et al. in \cite{Fou+}.

\section{Variation III:  List weightings}\label{ch:intro:var3}

A popular variation of many colouring problems is to colour elements from independently assigned lists rather than from one universal colour set.  Research on list-colouring problems often provides great insight into classical colouring problems.  For instance, Thomassen's proof that $\ch(G) \leq 5$ for any planar graph $G$ \cite{CTho} is arguably the most elegant proof of the classical result that every planar graph $G$ has $\chi(G) \leq 5$.  
Similarly, we hope to gain insight on the motivating conjectures presented thus far -- the 1-2-3 Conjecture, the 1-2 Conjecture, and the Additive Colouring Conjecture -- by considering colourings that come from list-weightings. 

Let $G$ be a graph and $k, r, s \in \Z^{+}$.  
Assign to each edge $e \in E(G)$ a list of weights $L_e$ and to each vertex $v$ a list of weights $L_v$.  Let $\mathcal{E} = \cup_{e \in E(G)} L_e$, $\mathcal{V} = \cup_{v \in V(G)} L_v$, and $\mathcal{L} = \mathcal{E} \cup \mathcal{V}$.
An {\bf edge list-weighting} of $G$ is a function $w: E(G) \rightarrow \mathcal{E}$ such that $w(e) \in L_e$ for each $e \in E(G)$, a {\bf vertex list-weighting} of $G$ is a function $w: V(G) \rightarrow \mathcal{V}$ such that $w(v) \in L_v$ for each $v \in V(G)$, and a {\bf total list-weighting} of $G$ is a function $w: E(G) \cup V(G) \rightarrow \mathcal{L}$ such that $w(e) \in L_e$ for each $e \in E(G)$ and $w(v) \in L_v$ for each $v \in V(G)$.  If the size of each list is at most $k$, then each weighting is referred to as an {\bf edge $k$-list-weighting}, {\bf vertex $k$-list-weighting}, or {\bf total $k$-list-weighting}, respectively.  If a total list-weighting has $|L_v| \leq r$ and $|L_e| \leq s$ for each $L_v$ and $L_e$, then the weighting is called a {\bf total $(r,s)$-list-weighting}.

Let $\chSe(G)$ denote the smallest value of $k$ for which assigning a list of size $k$ of permissible weights, called a {\bf $k$-list assignment}, to each edge of a graph permits an edge $k$-list-weighting which is a {\bf proper vertex colouring by sums};  each of the parameters given thus far generalizes similarly.

Note that we refer to lists of ``permissible'' weights begin assigned to edges, vertices, or both in the preceding paragraph.  For most colouring operations, these lists may be chosen freely from $\R$.  However, if one were to colour vertices by products from an edge weighting, it is clear that one would never choose an edge weight to be $0$ as this would colour each of its ends $0$.  Hence, we exclude the edge weight $0$ when colouring by products from edge list-weightings (for total and vertex list-weightings, $0$ {\em is} permissible for vertex lists).

\subsection{Edge list-weightings}\label{edgelist}

We will be primarily motivated by the following conjecture, a strengthening of the 1-2-3 Conjecture:

\begin{list123}[Bartnicki, Grytczuk, Niwcyk \cite{BGN09}]
For every nice graph $G$, $\chSe(G) \leq 3$.
\end{list123}

We first note that it is not true that $\chSe(G) = \Se(G)$ for every nice graph $G$.  For example, let $G$ be a path on $4k+2$ vertices for some positive integer $k$ and assign to each edge the list $\{0,a\}$ for some arbitrary $a \in \R^+$; one can easily check that no edge list-weighting exists which properly colours $V(G)$ by sums.

The authors of the List 1-2-3 Conjecture have developed an approach to the problem which makes use of Alon's Combinatorial Nullstellensatz \cite{A99}.  Let $G = (V,E)$ be a graph, with $E(G) = \{e_1, \ldots, e_m\}$.  Associate with each $e_i$ the variable $x_i$, and let $X_{v_j} = \sum_{e_i \ni v_j} \, x_i$.  For an an orientation $D$ of $G$, define the following polynomial:
	\begin{eqnarray*}
	P_D(x_1, \ldots, x_m) = \prod_{(u,v) \in E(D)}(X_v - X_u) \\
	\end{eqnarray*}
Let $w$ be an edge weighting of $G$.  By letting $x_i = w(e_i)$ for $1 \leq i \leq m$, $w$ is a proper vertex colouring by sums if and only if $P_D(w(e_1), \ldots, w(e_m)) \neq 0$.  We can analyze the polynomial $P_D$ by using the Combinatorial Nullstellsatz:

\begin{CN}[Alon \cite{A99}]
Let $\F$ be an arbitrary field, 
and let $f=f(x_1,\ldots,x_n)$ be a polynomial in $\F[x_1,\ldots,x_n]$.
Suppose the total degree of $f$ is $\sum_{i=1}^{n}t_i$, 
where each $t_i$ is a~nonnegative integer, 
and suppose the coefficient of $\prod_{i=1}^{n}x_i^{t_i}$ in $f$ is nonzero.
If $S_1,\ldots,S_n$ are subsets of $\F$ with $|S_i|>t_i$, then
there are $s_1\in S_1, s_2\in S_2,\ldots,s_n\in S_n$ so that
\[
 f(s_1,\ldots,s_n)\neq 0.
\]
\end{CN}

It can be shown that a term in $P_D$ has nonzero coefficient if and only if a related matrix has nonzero permanent (see \cite{BGN09, PW11, Ben-CN} for details).  The following theorem, which is proven using this approach, gives the best known bound on $\chSe(G)$:

\begin{thm}[Seamone \cite{mythesis,Ben-CN}]\label{chSebound}
If $G$ is a nice graph, then $\chSe(G) \leq 2\Delta(G)+1$.
\end{thm}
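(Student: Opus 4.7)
The plan is to apply the Combinatorial Nullstellensatz to the polynomial $P_D$ defined just above the theorem statement. First, I would fix any orientation $D$ of $G$; then an edge weighting $w$ is a proper vertex colouring by sums exactly when $P_D(w(e_1),\ldots,w(e_m))\neq 0$, so it suffices to show that for every edge-list-assignment with lists of size $2\Delta(G)+1$ there is a selection from the lists on which $P_D$ does not vanish.

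The main task is to identify a suitable monomial of $P_D$ and bound its exponents. Since $P_D$ is a product of $m=|E(G)|$ linear factors it has total degree at most $m$. Moreover, each factor $X_v - X_u$ is a nonzero polynomial: the coefficient of $x_e$ in $X_v - X_u$ is $[e \ni v]-[e \ni u]$, and the vanishing of this coefficient for every edge $e$ would force $u$ and $v$ to be a pair of leaves adjacent only to each other, which is forbidden because $G$ is nice. Thus every factor is nonzero, so $P_D\not\equiv 0$ (polynomial rings are integral domains) and $\deg(P_D)=m$. It follows that some monomial $\prod_i x_i^{t_i}$ with $\sum_i t_i=m$ has nonzero coefficient in $P_D$. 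Now for $e=uv\in E(G)$, the variable $x_e$ can appear in a factor indexed by $(u',v')\in E(D)$ only when $\{u',v'\}\cap\{u,v\}\neq\emptyset$, and the number of such directed edges of $D$ is at most $d_G(u)+d_G(v)\leq 2\Delta(G)$; since each factor contributes degree at most one in $x_e$, this gives $t_e\leq \deg_{x_e}(P_D)\leq 2\Delta(G)$.

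With this monomial in hand the rest is a direct invocation: applying the Combinatorial Nullstellensatz with $S_i = L_{e_i}$ and the $t_i$ above, the hypothesis $|L_{e_i}|\geq 2\Delta(G)+1 > t_i$ yields weights $w(e_i)\in L_{e_i}$ with $P_D(w)\neq 0$, hence $\chSe(G)\leq 2\Delta(G)+1$. The one genuinely delicate point in this plan is the nonvanishing of $P_D$ as a formal polynomial — this is where the hypothesis that $G$ is nice is essential and cannot be removed, as otherwise a $K_2$ component contributes a zero factor. Everything else is routine accounting of how often each variable appears among the $m$ linear factors, together with a single application of the nullstellensatz; the permanent interpretation mentioned in the excerpt could also be used to locate a nonvanishing monomial more explicitly, but the existence argument via $\deg(P_D)=m$ suffices for the stated bound.
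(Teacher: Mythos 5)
Your proposal is correct, and it reaches the stated bound by the same global strategy the survey attributes to the cited proof -- apply the Combinatorial Nullstellensatz to $P_D$ -- but with a different way of certifying the key monomial. The proof in \cite{mythesis,Ben-CN} (as sketched in the survey) locates a usable top-degree term of $P_D$ through the permanent criterion: a monomial's coefficient is nonzero if and only if an associated matrix has nonzero permanent. You bypass that machinery entirely: niceness guarantees that every factor $X_v-X_u$ is a nonzero linear form (the variable $x_{uv}$ cancels, and what remains vanishes identically only for a $K_2$ component), so $P_D$ is a nonzero homogeneous polynomial of degree $m$ over the integral domain $\R[x_1,\ldots,x_m]$, hence some degree-$m$ monomial survives; the exponent bound is then just a count of the factors in which each variable can occur. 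That count is valid, and in fact slightly generous: for $e=uv$ the variable $x_e$ occurs only in factors indexed by edges having exactly one endpoint in $\{u,v\}$, so $t_e\le d_G(u)+d_G(v)-2\le 2\Delta(G)-2$, and your argument actually yields $\chSe(G)\le 2\Delta(G)-1$, which is more than enough for Theorem \ref{chSebound}. What the permanent formulation buys -- and why it is the workhorse in \cite{BGN09,PW11,Ben-CN} -- is control over \emph{which} top-degree monomials have nonzero coefficients, which is what one needs for small absolute constants (e.g.\ exhibiting monomials with all exponents at most $2$ to get $3$-weight-choosability of particular classes) and for the extensions to total $(k,l)$-weight choosability; for a coarse $O(\Delta(G))$ bound, your more elementary existence argument suffices, and the one genuinely essential point, the nonvanishing of $P_D$ via niceness, is exactly where you placed the emphasis.
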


The Combinatorial Nullstellensatz is also used in \cite{BGN09} to show that $\chSe(G) \leq 3$ if $G$ is complete, complete bipartite, or a tree.  Upper bounds which do not rely on $\Delta(G)$ are proven in \cite{mythesis,Ben-CN} for classes of Cartesian products, though these upper bounds are not constant.

While no improved bounds are known for $\chme(G)$, the Lov\'asz Local Lemma can be used to prove the following results for edge-list-weightings which properly colour vertices by sequences.

\begin{thm}[Seamone, Stevens \cite{Ben-LLL}]\label{listsequencebounds}
If $G$ is a nice graph, then
\begin{enumerate}
\item $\chwe(G) \leq 2$, and
\item $\chWe(G) \leq 3$ if $\delta(G) > \log_3(2\Delta(G)^2 - 2\Delta(G) + 1) + 2$.
\end{enumerate}
\end{thm}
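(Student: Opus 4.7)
The plan is to apply the Lov\'asz Local Lemma to a random list-weighting. Fix an ordering of $E(G)$, and for each edge $e$ select $w(e) \in L_e$ independently and uniformly at random. For every edge $uv$ with $\deg(u) = \deg(v)$, let $B_{uv}$ be the bad event that the weight sequences at $u$ and $v$ coincide; if $\deg(u) \neq \deg(v)$ the sequences have different lengths and are automatically distinct, so no event is needed.

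The central estimate is $\pr(B_{uv}) \leq (1/k)^{\delta(G) - 1}$ when each list has size $k$. Writing the incident edges of $u$ and $v$ in global order as $a_1, \ldots, a_d$ and $b_1, \ldots, b_d$ with $uv = a_p = b_q$ (where $d = \deg(u) = \deg(v)$), the event $B_{uv}$ requires $w(a_i) = w(b_i)$ at every position $i$. At each position $i \notin \{p,q\}$ the edges $a_i$ and $b_i$ are distinct (since $uv$ is the unique common incident edge of $u$ and $v$), so their weights are independent and agree with probability at most $1/k$. Taking the product over the $d-1$ positions outside $\{p\}$, and accounting for the additional constraint at position $q$ when $p \neq q$ (which can only strengthen the bound), yields the claimed estimate. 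For the dependency graph, $B_{uv}$ is determined by the weights on edges incident to $\{u,v\}$, so $B_{uv}$ and $B_{u'v'}$ can be dependent only when some edge touches both pairs, which forces $\{u',v'\} \cap (N[u] \cup N[v]) \neq \emptyset$; a direct count then bounds the dependency degree by $2\Delta(G)^2 - 2\Delta(G)$.

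For part 2, setting $k = 3$ and applying the symmetric Lov\'asz Local Lemma, a valid list-weighting exists whenever
\[
e \cdot (1/3)^{\delta - 1} \cdot (2\Delta^2 - 2\Delta + 1) \leq 1,
\]
which rearranges to precisely the stated hypothesis $\delta(G) > \log_3(2\Delta(G)^2 - 2\Delta(G) + 1) + 2$.

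The main obstacle is part 1, where setting $k = 2$ gives only $\pr(B_{uv}) \leq (1/2)^{\delta - 1}$, which is too weak for the Local Lemma on graphs of small minimum degree. The extra leverage must come from the freedom to \emph{choose} the ordering of $E(G)$: pick an ordering so that for every pair $u,v$ with $\deg(u) = \deg(v)$, the positions $p$ and $q$ of $uv$ in the two sequences differ---for instance, by basing the ordering on a rooted spanning forest and forcing each non-root vertex's parent edge to occupy position $1$ of its sequence. When $p \neq q$, the coincidence constraints at positions $p$ and $q$ force three weights to agree rather than two, sharpening the bound to $\pr(B_{uv}) \leq (1/2)^d$; combined with a refined dependency count tailored to the chosen ordering (and, if necessary, a separate handling of small-degree components by a direct combinatorial construction), the symmetric Local Lemma then closes the argument for every nice graph.
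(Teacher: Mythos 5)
Your treatment of part 2 is sound and is, in substance, the approach the survey attributes to the cited work: a uniformly random choice from the lists, the bad event $B_{uv}$ for adjacent equal-degree vertices, the bound $\pr(B_{uv})\leq (1/k)^{\deg(u)-1}$ coming from $d-1$ equality constraints on pairwise disjoint pairs of edges, a dependency degree of order $2\Delta(G)^2$, and the symmetric Local Lemma. (Two small quibbles: your dependency count of $2\Delta(G)^2-2\Delta(G)$ needs a slightly more careful argument than ``a direct count'' -- a naive sum of degrees over $N[u]\cup N[v]$ gives $2\Delta(G)^2-2$ -- and the stated hypothesis does not ``rearrange precisely'' to the LLL condition; it implies $e\,p\,(D+1)<1$ only because $e<3$. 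Neither affects correctness.)

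The genuine gap is part 1, which is exactly the part of the theorem carrying no minimum-degree hypothesis, and your sketch cannot deliver it. Choosing the ordering so that the common edge $uv$ occupies different positions in the two sequences improves the failure probability only from $(1/2)^{d-1}$ to $(1/2)^{d}$, where $d=\deg(u)=\deg(v)$ can be as small as $1$ or $2$ in a nice graph; with a constant per-event probability like $1/4$ and dependency degree at least $4$, the Local Lemma condition fails already for paths and cycles, and in general your argument would still need $\delta(G)\gtrsim 2\log_2\Delta(G)$, i.e.\ a hypothesis of the same shape as in part 2. The fallback you offer -- ``separate handling of small-degree components by a direct combinatorial construction'' -- does not close this, because the obstruction is low-degree \emph{vertices}, not low-degree components: trees, planar graphs, or any graph with a degree-$1$ or degree-$2$ vertex adjacent to a vertex of equal degree can have huge $\Delta(G)$ in the same component, and these are precisely the cases where the probabilistic bound collapses. (Your specific spanning-forest ordering also fails to guarantee distinct positions for non-tree edges and for edges at the roots.) So the unconditional bound $\chwe(G)\leq 2$ requires a genuinely different, essentially constructive exploitation of the freedom to choose the edge ordering, which is the main content of part 1 in the cited paper and is missing from your proposal; as written, your argument proves part 2 but only a degree-conditional weakening of part 1.
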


The best known bound on products is a direct corollary of Theorem \ref{chSebound}; one takes a fixed logarithm of the absolute value of every element in each list, and applies the theorem.

\begin{cor}[Seamone \cite{mythesis,Ben-CN}]\label{chPebound}
If $G$ is a nice graph, then $\chSe(G) \leq 4\Delta(G)+2$.
\end{cor}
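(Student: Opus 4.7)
The plan is to reduce the problem of properly colouring vertices by products from edge list-weightings to the corresponding sum version, and then invoke Theorem \ref{chSebound}. The key idea is to take logarithms of absolute values, with a factor-of-two blow-up in the required list size to compensate for the possible collapse $\{a,-a\} \mapsto \log|a|$.

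First, given an assignment of lists $L_e$ of nonzero reals of size $4\Delta(G)+2$ to the edges of $G$ (recall that $0$ is excluded as a permissible edge weight when colouring by products), I would form the derived assignment $L_e' := \{\log|a| : a \in L_e\}$. Because the map $a \mapsto \log|a|$ identifies $a$ with $-a$ and nothing else, $|L_e'| \geq |L_e|/2 = 2\Delta(G)+1$ for every edge $e$. Next, I would apply Theorem \ref{chSebound} to the list assignment $\{L_e'\}$ to obtain an edge list-weighting $w': E(G) \to \bigcup_e L_e'$ with $w'(e) \in L_e'$ that is a proper vertex colouring by sums.

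Finally, I would lift $w'$ to an edge list-weighting $w: E(G) \to \bigcup_e L_e$ by choosing, for each $e \in E(G)$, some $w(e) \in L_e$ with $\log|w(e)| = w'(e)$ (either sign works if both are present in $L_e$). Then for every $v \in V(G)$,
\[
\log\Bigl|\prod_{e \ni v} w(e)\Bigr| \;=\; \sum_{e \ni v} \log|w(e)| \;=\; \sum_{e \ni v} w'(e).
\]
Since $w'$ yields distinct sums on adjacent vertices, the absolute values of the products of incident weights differ at adjacent vertices, and hence so do the products themselves. There is no real obstacle here: once the logarithm trick is in hand, the only mildly subtle points are the factor-of-two blow-up in list size and the observation that distinguishing by $|\cdot|$ is enough to distinguish the products, both of which are immediate.
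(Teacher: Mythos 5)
Your proof is correct and is precisely the argument the paper gives (in a single line): take a fixed logarithm of the absolute value of every element in each list, apply Theorem \ref{chSebound} to the resulting lists of size at least $2\Delta(G)+1$, and lift back, noting that distinct log-sums force distinct products. You also correctly read the statement as bounding the product parameter $\chPe(G)$, which is what the label, the surrounding text, and the summary table intend despite the typo in the displayed symbol.
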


One may easily extend the concept of vertex colouring edge weightings of graphs to the realm of digraphs.  For an arc-weighting of a digraph $D$, the natural way to define a vertex colouring by sums is to define the colour of a vertex as the sum of the weights of its incident incoming arcs less the sum of the weights of its incident outgoing arcs.  Such vertex colouring edge weightings are completely understood.

\begin{thm}[Bartnicki, Grytczuk, Niwcyk \cite{BGN09}, Khatirinejad, Naserasr, Newman, Seamone, Stevens \cite{Ben2}]\label{BGNdigraph}
For any digraph $D$,  $\chSe(D) \leq 2$.
\end{thm}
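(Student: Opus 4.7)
The plan is to adapt the Combinatorial Nullstellensatz approach used just before Theorem \ref{chSebound} to the digraph setting. To each arc $e \in E(D)$ associate a variable $x_e$, and to each vertex $v$ associate the linear form $X_v = \sum_{e \text{ enters } v} x_e - \sum_{e \text{ leaves } v} x_e$, so that substituting $x_e = w(e)$ recovers the colour of $v$. Set
\[
P_D(x_1, \ldots, x_m) = \prod_{(u,v) \in E(D)} (X_v - X_u);
\]
an arc weighting $w$ is a proper vertex colouring by sums precisely when $P_D$ evaluates to a nonzero value under the substitution. Since every list has size $2$, the Combinatorial Nullstellensatz will produce the required weighting as soon as I exhibit a nonzero coefficient of some monomial $\prod_e x_e^{t_e}$ with every $t_e \leq 1$. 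The total degree of $P_D$ is exactly $|E(D)|$, so the only candidate monomial is $\prod_e x_e$, in which every variable appears exactly once.

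The coefficient of $\prod_e x_e$ in $P_D$ can be read off as a permanent. Writing each factor as $X_{v_a} - X_{u_a} = \sum_e M_{a,e}\, x_e$, expanding and collecting the monomial $\prod_e x_e$ gives the sum, over bijections $\phi$ from arcs to arcs, of $\prod_a M_{a,\phi(a)}$, which is precisely $\per M$. I would then verify that $M = N^{\top} N$, where $N$ is the signed vertex-arc incidence matrix of $D$ (with $N_{v,e} = +1$ if $v$ is the head of $e$, $-1$ if $v$ is the tail of $e$, and $0$ otherwise): indeed,
\[
M_{a,e} = N_{v_a,e} - N_{u_a,e} = \sum_{w \in V(D)} N_{w,a}\, N_{w,e}.
\]
So $M$ is a real symmetric Gram matrix, and therefore positive semidefinite, with diagonal entries $M_{a,a} = \sum_w N_{w,a}^{2} = 2$.

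The decisive ingredient is Marcus' permanental inequality: for any positive semidefinite Hermitian matrix $H$ one has $\per H \geq \prod_i H_{ii}$. Applied to $M = N^{\top} N$, this yields $\per M \geq 2^{|E(D)|} > 0$, so the coefficient of $\prod_e x_e$ in $P_D$ is nonzero, and the Combinatorial Nullstellensatz furnishes a weighting from the given lists that properly colours $V(D)$ by sums.

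The delicate point I expect is identifying the coefficient as $\per(N^{\top} N)$: the sign conventions attached to $N$ must be chosen so that the contributions from $+X_v$ and $-X_u$ assemble correctly into $(N^{\top} N)_{a,e}$. Once that bookkeeping is done, Marcus' inequality eliminates any concern about cancellation among the many signed terms contributing to the permanent, and the remainder is a formal application of the Combinatorial Nullstellensatz.
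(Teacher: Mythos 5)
Your proposal is correct, and it is essentially the approach the survey attributes to Khatirinejad, Naserasr, Newman, Seamone, and Stevens \cite{Ben2}: the survey itself only sets up the polynomial $P_D$ and cites the two proofs (one constructive, one via the Combinatorial Nullstellensatz), and your argument is the standard Nullstellensatz route, with the coefficient of $\prod_e x_e$ identified as $\per(N^{\top}N)$ and its positivity guaranteed by Marcus' permanental analogue of Hadamard's inequality. The bookkeeping you flag does check out: $M_{a,e}=N_{v_a,e}-N_{u_a,e}=\sum_{w}N_{w,a}N_{w,e}$, so $M$ is a Gram matrix with diagonal entries $2$, giving $\per M\geq 2^{|E(D)|}>0$ and hence a valid weighting from any lists of size $2$.
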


The first set of authors cited in Theorem \ref{BGNdigraph} proved this result by a constructive method; the second set used an adaptation of the Combinatorial Nullstellensatz method described above.


\subsection{Total list-weightings}

Just as the 1-2-3 Conjecture has a natural list generalization, so too does the 1-2 Conjecture.

\begin{list12}[Przyby{\l}o, Wozniak \cite{PW11}]
For every graph $G$, $\chSt(G) \leq 2$.
\end{list12}

In \cite{WZ}, Wong and Zhu study {\bf $(k,l)$-total list-assignments}, which are assignments of lists of size $k$ to the vertices of a graph and lists of size $l$ to the edges.  If any $(k,l)$-total list-assignment of $G$ permits a total weighting which is a proper vertex colouring by sums, then $G$ is {\bf $(k,l)$-weight choosable}.  Obviously, if a graph $G$ is $(k,l)$-weight choosable, then $\chSt(G) \leq \max\{k,l\}$.  Wong and Zhu make the following two conjectures which, if true, would be stronger than the List 1-2-3 and List 1-2 Conjectures:

\begin{conj}[Wong, Zhu \cite{WZ}]
Every graph is $(2,2)$-weight choosable.  Every nice graph is $(1,3)$-weight choosable.
\end{conj}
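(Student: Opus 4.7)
The plan is to attack both conjectures via the Combinatorial Nullstellensatz method that underlies Theorem~\ref{chSebound}. For $(1,3)$-weight choosability, fix an orientation $D$ of $G$ and a vertex list-assignment with $|L_v|=1$ (so that the vertex weight $w(v)$ is forced). Associate a variable $x_i$ to each edge $e_i$, write $X_v = \sum_{e_i \ni v} x_i$, and consider
\[
P_D(x_1,\ldots,x_m) \;=\; \prod_{(u,v)\in E(D)} \bigl(X_v - X_u + (w(v)-w(u))\bigr).
\]
Since the vertex shifts are constants, the coefficient of any top-degree monomial of $P_D$ coincides with its coefficient in $Q_D = \prod(X_v-X_u)$. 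So by the Combinatorial Nullstellensatz it suffices to exhibit a monomial $\prod x_i^{t_i}$ of total degree $|E(G)|$ in $Q_D$ with every $t_i\le 2$ and nonzero coefficient. The $(2,2)$-case is similar: enlarge the variable set with $y_v$ for each vertex, form $P_D = \prod(X_v + y_v - X_u - y_u)$, and seek a monomial of the appropriate degree in which every $x_i$-exponent and every $y_v$-exponent is at most $1$.

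The second step is the standard reformulation in which the coefficient being sought equals (up to sign) the permanent of an explicit incidence-type matrix associated with $D$ and the chosen exponent vector (see \cite{BGN09,PW11,Ben-CN}). I would attempt to build a favourable orientation $D$ from structural features of $G$: for graphs with all even degrees, an Eulerian orientation balances in- and out-degree at every vertex and makes natural candidate exponent vectors visible; for graphs with odd-degree vertices, one pre-processes by pairing them off along an auxiliary path system. The hope is that the resulting incidence matrix is close enough to doubly stochastic (after scaling) that Schrijver-style or van der Waerden-style permanent lower bounds prove nonvanishing.

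The main obstacle, and the reason neither half of the conjecture is settled, is exactly this last coefficient computation. The existing Nullstellensatz bounds degrade linearly in $\Delta(G)$ precisely because the available monomials are forced to carry exponents as large as $2\Delta(G)$; capping every exponent at $2$ (or at $1$) is a severe combinatorial restriction that the permanent formulation does not obviously respect. I would therefore first carry out the programme on graph classes where the orientation is essentially canonical --- complete graphs, complete bipartite graphs, regular bipartite graphs, trees, and Cartesian products --- extending the special cases treated in \cite{BGN09,mythesis,Ben-CN} to the $(1,3)$ and $(2,2)$ settings; the vertex-list variables in the $(2,2)$ problem should in fact give extra slack, making that case the more approachable of the two.

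If the algebraic route stalls, a parallel line of attack is to adapt the algorithmic argument of Kalkowski--Karo\'nski--Pfender behind Theorem~\ref{sum5} to the list setting, processing vertices in a linear order and, at each step, adjusting a small number of incident edge and vertex weights chosen from their lists to repair conflicts with previously processed vertices. Analysing the local adjustments with the Lov\'asz Local Lemma --- in the spirit of Theorem~\ref{listsequencebounds} --- looks like the most realistic route to a near-constant bound on $\chSt(G)$, which in turn would at least reduce both conjectures to finite checks on small cases.
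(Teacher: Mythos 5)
This statement is an open conjecture of Wong and Zhu; the survey records it without proof, and your text is, by its own admission, a research plan rather than a proof. The genuine gap is exactly the step you defer: to apply the Combinatorial Nullstellensatz you must exhibit, for \emph{every} graph (or every nice graph), a monomial of top degree $|E(G)|$ (resp.\ $|E(G)|+$ the relevant vertex degree count in the $(2,2)$ setting) whose coefficient is nonzero and whose exponents are capped at $2$ (resp.\ at $1$ on each variable). That existence claim \emph{is} the content of the conjecture in the polynomial formulation, and you give no argument for it beyond hope. Moreover, the specific tool you propose for the nonvanishing --- van der Waerden/Schrijver-type permanent lower bounds --- does not apply: those bounds concern nonnegative (doubly stochastic) matrices, whereas the matrices arising from the permanent reformulation of the coefficient have entries of both signs, and cancellation in exactly such signed permanents is the reason the known bounds (Theorem~\ref{chSebound}, Theorem~\ref{chStbound}) degrade with $\Delta(G)$ rather than yielding the constants $2$ and $3$. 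Choosing an Eulerian or otherwise balanced orientation does not remove this obstruction; it only normalizes degrees, not signs.

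Two smaller points. First, your reduction of the $(1,3)$ case to the homogeneous polynomial $Q_D=\prod(X_v-X_u)$ is fine as far as coefficients go, but note that it silently discards the vertex lists entirely, so what you are really attempting is $(1,3)$-choosability via edge variables alone --- i.e.\ a strengthening of the List 1-2-3 Conjecture restricted to exponents at most $2$, which is not known even for quite modest classes beyond those already listed (complete graphs, trees, generalized theta graphs, etc.). Second, the fallback of adapting the Kalkowski--Karo\'nski--Pfender algorithm (Theorem~\ref{sum5}) or Kalkowski's $(2,3)$-argument to lists is known to be problematic: those algorithms exploit the arithmetic structure of an interval of consecutive integers (the ability to shift a weight up or down by $1$ while keeping it admissible), which arbitrary real lists do not provide; this is precisely why the best known list bounds are $\chSe(G)\le 2\Delta(G)+1$ and $\chSt(G)\le\lceil\frac{2}{3}\Delta(G)\rceil+1$ rather than constants. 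So neither branch of your plan closes the gap; as written it establishes nothing beyond the special cases already in the literature.
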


It is easy to prove that every graph $G$ is $(\ch(G),1)$-weight choosable, and hence we have the ``trivial'' upper bound of $\chSt(G) \leq \ch(G) \leq \Delta(G)+1$.  The polynomial method outlined in Section \ref{edgelist} may be adapted to the problem of total weightings (see \cite{PW11} for details).  This approach is used to prove the following improvement on the trivial upper bound on $\chSt(G)$:

\begin{thm}[Seamone \cite{mythesis,Ben-CN}]\label{chStbound}
Any graph $G$ is $(\lceil\frac{2}{3}\Delta(G)\rceil + 1, \lceil\frac{2}{3}\Delta(G)\rceil + 1)$-weight choosable, and hence $\chSt(G) \leq \lceil\frac{2}{3}\Delta(G)\rceil + 1$.
\end{thm}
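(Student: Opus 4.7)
The plan is to apply the Combinatorial Nullstellensatz via the polynomial method for total weightings developed by Przyby{\l}o and Wozniak \cite{PW11}, extending the edge-weighting framework from Section \ref{edgelist}. Associate with each edge $e_i$ a variable $x_i$ and with each vertex $v_j$ a variable $y_j$, and set $Y_{v_j} = y_j + \sum_{e_i \ni v_j} x_i$, which represents the colour that would be assigned to $v_j$ under a total weighting. Fix an orientation $D$ of $G$ and consider the graph polynomial
\[
P_D(x_1, \ldots, x_m, y_1, \ldots, y_n) = \prod_{(u,v) \in E(D)} (Y_v - Y_u).
\]
A total weighting $w$ is a proper vertex colouring by sums precisely when $P_D$ evaluates to a nonzero value at $w$. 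By the Combinatorial Nullstellensatz, it is enough to exhibit a monomial $\prod_i x_i^{a_i} \prod_j y_j^{b_j}$ with nonzero coefficient in $P_D$ such that every exponent is at most $\lceil \frac{2}{3}\Delta(G)\rceil$; such a monomial forces $(k,k)$-weight choosability of $G$ with $k = \lceil \frac{2}{3}\Delta(G)\rceil + 1$, and hence the bound on $\chSt(G)$.

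The first structural observation is that in each factor $(Y_v - Y_u)$ the variable $x_{uv}$ cancels, so the factor is a signed sum of $d(u) + d(v)$ terms of the form $\pm y_w$ (with $w \in \{u,v\}$) and $\pm x_f$ (for each $f \neq uv$ incident to $u$ or $v$). Expanding $P_D$ corresponds to choosing exactly one such term from every factor. I would next choose an orientation $D$ of $G$ that balances in- and out-degrees at each vertex (for instance, by Eulerian decomposition), and then describe a ``selection scheme'' that allocates terms so that each edge-variable exponent $a_e$ and each vertex-variable exponent $b_v$ remains at or below $\lceil \frac{2}{3}\Delta(G)\rceil$. The ratio $2/3$ arises because each vertex $v$ participates in $d(v)$ factors, and the monomial must distribute these $d(v)$ slots between the single variable $y_v$ and the edge-variables $x_e$ with $e \ni v$ in a way that neither type consumes more than roughly two-thirds of the available degree budget at any vertex or edge.

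The main obstacle will be showing that the coefficient of the monomial constructed above is genuinely nonzero. This coefficient is a signed count, taken over all selection schemes producing the same monomial, of the product of signs of the chosen terms, and can be rewritten (up to sign) as the permanent of a suitable $0/1$ incidence matrix that records which factors of $P_D$ contribute each variable slot. The crux of the proof will be showing that this permanent does not vanish; this typically requires either an Alon-Tarsi style parity argument exploiting the chosen orientation, or a direct construction of the incidence matrix whose permanent can be argued to be odd (and hence nonzero) by a combinatorial identity. Once this nonvanishing is established, the Combinatorial Nullstellensatz immediately yields a valid choice of weights from any total $(\lceil \frac{2}{3}\Delta(G)\rceil + 1,\lceil \frac{2}{3}\Delta(G)\rceil + 1)$-list-assignment, completing the proof.
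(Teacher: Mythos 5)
Your setup is the right one, and it is the same framework the paper points to: associate variables $x_i$ to edges and $y_j$ to vertices, form $P_D=\prod_{(u,v)\in E(D)}(Y_v-Y_u)$ for an orientation $D$, note the cancellation of $x_{uv}$ in its own factor, and reduce $(k,k)$-weight choosability with $k=\lceil\frac{2}{3}\Delta(G)\rceil+1$ to exhibiting a monomial of $P_D$ with nonzero coefficient in which every exponent is at most $\lceil\frac{2}{3}\Delta(G)\rceil$ (since $P_D$ is homogeneous of degree $|E(G)|$, the total-degree hypothesis of the Combinatorial Nullstellensatz is automatic). But the proposal stops exactly where the theorem's content begins. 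You never construct the monomial: the ``selection scheme'' that is supposed to keep every $a_e$ and $b_v$ below $\lceil\frac{2}{3}\Delta(G)\rceil$ is not described, and the heuristic about distributing the $d(v)$ factors at a vertex between $y_v$ and the incident edge variables does not by itself yield such an allocation, let alone one whose coefficient survives cancellation. Likewise, the nonvanishing of the coefficient is deferred to ``an Alon--Tarsi style parity argument'' or ``a combinatorial identity'' showing an odd permanent, neither of which is given nor known to apply here; the coefficient is (up to a product of factorials) the permanent of a matrix with entries in $\{-1,0,1\}$ determined by the incidence structure and the orientation, not a $0/1$ incidence matrix, and there is no general reason for it to be odd. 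Since the trivial argument only gives $\chSt(G)\le\ch(G)\le\Delta(G)+1$, everything separating $\Delta(G)+1$ from $\lceil\frac{2}{3}\Delta(G)\rceil+1$ lives precisely in the two steps you left open, so the proposal as written is a restatement of the standard Nullstellensatz/permanent reduction rather than a proof.

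A secondary concern: the appeal to a balanced (Eulerian-type) orientation is unmotivated. Changing the orientation of $D$ only changes $P_D$ by a global sign, so the multiset of monomial coefficients is orientation-independent up to sign; an argument that genuinely exploits orientation (as Alon--Tarsi does for the graph polynomial of list colouring) would need a different polynomial or a different accounting than the one you set up. To complete the proof you would need a concrete combinatorial construction (in the cited work this is done by analyzing the permanent of the associated matrix and bounding its ``monomial index'' by roughly $\frac{2}{3}\Delta(G)$), together with an explicit verification that the chosen entry pattern contributes a nonzero permanent.
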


The other known results on $(k,l)$-weight choosability focus on graph classes which are $(k,l)$-weight choosable for small values of $k$ and $l$ (see \cite{BGN09}, \cite{PY}, \cite{PW11}, \cite{WZ}, \cite{WWZ}, \cite{WYZ}).  Table \ref{wctable} on page \pageref{wctable} summarizes these results, most of which are also proven using the Combinatorial Nullstellensatz approach described in Section \ref{edgelist}.

\begin{table}[h!]
\begin{center}
	\begin{tabular}{| c | c | }
    	\hline
    	Type of graph & $(k,l)$-weight choosability  \\ \hline
	$K_2$ & $(2,1)$  \\
	$K_n$, $n \geq 3$ & $(1,3)$, $(2,2)$   \\
	$K_{n,m}$, $n \geq 2$ & $(1,2)$ \\
	$K_{m,n,1,\ldots,1}$ & $(2,2)$   \\
	trees & $(1,3)$, $(2,2)$   \\
	unicyclic graphs & $(2,2)$  \\
	generalized theta graphs & $(1,3)$, $(2,2)$  \\
	wheels & $(2,2)$, $(1,3)$  \\
	hypercubes $Q_d$, $d$ even or $d=3$ & $(1,3)$ \\
	planar & $(1,10)$ \\
	outerplanar & $(1,4)$ \\
	$s$-degenerate graphs $s \geq 2$ & $(1,2s)$ \\
	\hline
  	\end{tabular}
  \caption{$(k,l)$-weight choosability of classes of graphs}
  \label{wctable}
\end{center}
\end{table}

Hor\u{n}\'ak and Wo\'zniak \cite{HW} consider the list variation of $\se(G)$ (or $\textup{gndi}(G)$).  They denote this parameter $\textup{glndi}(G)$, while we denote it $\chse(G)$.  They show that $\chse(G) = \se(G)  = \lceil{\log_2{\chi(G)}}\rceil + 1$ in the case where $G$ is a path or cycle, and that $\chse(T) \leq 3$ for any tree $T$.  If the edge weighting is required to give a proper edge colouring as well, they denote the corresponding parameter $\textup{lndi}(G)$ in correspondence with the parameter $\textup{ndi}(G)$ for the non-list version of the problem.  They show that $\textup{lndi}(G) = \textup{ndi}(G)$ if $G$ is a cycle or a tree and conjecture that $\textup{lndi}(G) = \textup{ndi}(G)$ for every graph $G$, a conjecture reminiscent of the List Colouring Conjecture (see \cite{JT} for a history of the List Colouring Conjecture).

We have already noted that $\chSt(G) \leq \ch(G)$ for any graph $G$.  This upper bound also holds for colourings by multisets and by products.  It follows that the following graphs, which all have low choosability number, have small values of $\chSt(G)$, $\chPt(G)$ and $\chmt(G)$ (references stated are for the bound on $\ch(G)$ used to obtain the result on weighting parameters):

\begin{prop}  Let $n$ be a positive integer.
\begin{enumerate}
\item \textup{(Erd{\H o}s, Rubin, Taylor \cite{ERT})} If $G$ is a graph whose core\footnote{The core of a graph is the subgraph obtained by repeated deletion of vertices of degree 1 until none remain.  This is one of two standard definitions for the core of a graph, and should not be confused with the definition from the study of graph homomorphisms.} is $K_1$, $C_{2n+2}$, or $\theta_{2,2,2n}$, then $\chSt(G), \chPt(G),$ and $\chmt(G)$ are all at most $2$.
\item \textup{(Akbari, Ghanbari, Jahanbekam, Jamaali \cite{AGJJ})} If all cycles of $G$ have length divisible by an integer $k \geq 3$, then $\chSt(G), \chPt(G),$ and $\chmt(G)$ are all at most $3$.
\item \textup{(Thomassen \cite{CTho})} If $G$ is planar, then $\chSt(G), \chPt(G),$ and $\chmt(G)$ are all at most $5$.
\item \textup{(Alon, Tarsi \cite{AT92})} If $G$ is bipartite and planar, then $\chSt(G), \chPt(G),$ and $\chmt(G)$ are all at most $3$.
\end{enumerate}
\end{prop}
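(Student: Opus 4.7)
The plan is to reduce every statement to a known list-chromatic bound by way of the chain $\chmt(G) \leq \chSt(G) \leq \ch(G)$ (with an analogous bound for $\chPt(G)$), which is already asserted in the paragraph immediately preceding the proposition. First I would briefly recall why these three weighting parameters are each bounded above by $\ch(G)$. Given any total list-assignment with lists of size $\ch(G)$, fix an arbitrary weight $w(e) \in L_e$ for each edge and set $S_v := \sum_{e \ni v} w(e)$. Choosing vertex weights $w(v) \in L_v$ so that the total sums are distinct along edges is precisely the problem of finding a proper list colouring from the translated lists $L_v' := \{x + S_v : x \in L_v\}$; since $|L_v'| = |L_v| \geq \ch(G)$, such a colouring exists, giving $\chSt(G) \leq \ch(G)$. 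The product case is analogous with $L_v'' := \{P_v \cdot x : x \in L_v\}$, where $P_v := \prod_{e \ni v} w(e) \neq 0$, invoking the convention from the start of Section \ref{ch:intro:var3} that $0$ is forbidden from edge lists for the product variant. For multisets, equal multisets have equal sums, so any total weighting that properly colours by sums also properly colours by multisets, whence $\chmt(G) \leq \chSt(G)$.

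With these reductions in hand, each part follows by inserting the cited choosability bound:
\begin{enumerate}
\item Erd{\H o}s, Rubin, and Taylor characterised $2$-choosable graphs, yielding $\ch(G) \leq 2$ whenever the core of $G$ is $K_1$, $C_{2n+2}$, or $\theta_{2,2,2n}$.
\item Akbari, Ghanbari, Jahanbekam, and Jamaali proved $\ch(G) \leq 3$ for any graph in which every cycle has length divisible by an integer $k \geq 3$.
\item Thomassen proved $\ch(G) \leq 5$ for every planar graph.
\item Alon and Tarsi proved $\ch(G) \leq 3$ for every planar bipartite graph.
\end{enumerate}
Combining these with $\chmt(G) \leq \min\{\chSt(G), \chPt(G)\} \leq \ch(G)$ gives the bounds $2$, $3$, $5$, and $3$ claimed in the four parts.

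The proof poses essentially no obstacle, as it is a direct composition of established list-chromatic results with the comparison already recorded in the preceding paragraph. The only point requiring a moment of care is the product reduction: it relies on $P_v \neq 0$, which is precisely why the paper adopts the convention at the start of Section \ref{ch:intro:var3} that $0$ is not a permissible edge weight when colouring by products.
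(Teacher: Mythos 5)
Your proposal is correct and follows exactly the paper's intended route: the paper derives the proposition from the observation in the preceding paragraph that $\chSt(G)$, $\chPt(G)$, and $\chmt(G)$ are all at most $\ch(G)$ (via the $(\ch(G),1)$-weight-choosability argument you spell out), combined with the cited choosability bounds of Erd{\H o}s--Rubin--Taylor, Akbari et al., Thomassen, and Alon--Tarsi. Your added care about excluding $0$ from edge lists in the product case matches the convention adopted at the start of Section~\ref{ch:intro:var3}, so nothing is missing.
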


Furthermore, every graph $G$ has the following two upper bounds on $\chSt(G)$, $\chPt(G)$, and $\chmt(G)$:

\begin{prop}\label{chbound}
If $G$ is a graph on $n$ vertices, then
	\begin{enumerate}
	\item \textup{(Erd{\H o}s, Rubin, Taylor \cite{ERT})} each of $\chPt(G)$ and $\chmt(G)$ is at most $\Delta(G) + 1$, and
	\item \textup{(Eaton \cite{Eaton})} each of $\chSt(G), \chPt(G),$ and $\chmt(G)\}$ is at most $\chi(G)\log{n}$.
	\end{enumerate}
\end{prop}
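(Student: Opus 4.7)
Both parts reduce to known choosability-style bounds via the following uniform observation. Fix any edge weighting from the edge lists, and write $s_v=\sum_{e\ni v}w(e)$, $p_v=\prod_{e\ni v}w(e)$, and $M_v$ for the multiset of edge weights at $v$. For each of the three colouring regimes (sums, products, multisets), the requirement that $v$'s colour differ from that of an already-weighted neighbour $u$ forbids at most \emph{one} value of $w(v)\in L_v$. For sums, $w(v)+s_v=w(u)+s_u$ pins $w(v)$ to a unique value. For products, $p_v\neq 0$ since $0\notin L_e$, so $w(v)\cdot p_v=w(u)\cdot p_u$ does the same. For multisets, if $\deg(u)\neq\deg(v)$ the multisets trivially differ by size; if $\deg(u)=\deg(v)$, then $\{w(v)\}\cup M_v=\{w(u)\}\cup M_u$ forces $w(v)$ to be the unique element of $\{w(u)\}\cup M_u$ ``missing'' from $M_v$, counted with multiplicity. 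I expect the multiset case to be the main technical point and would verify it by comparing multiplicities element by element.

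With this one-forbidden-value-per-neighbour property in hand, part (1) follows by the greedy degree-choosability argument that underlies ERT's theorem $\ch(G)\le\Delta(G)+1$: process the vertices in any order and, at each vertex, select a weight from its list of size $\Delta(G)+1$ avoiding the at most $\Delta(G)$ values forbidden by already-weighted neighbours. Equivalently, for products one may scale each $L_v$ by $p_v$ to reduce the problem to standard list colouring of $G$ with lists of size $\Delta(G)+1$ and invoke ERT directly; for multisets no analogous reduction to standard list colouring is available, but the direct greedy step still works.

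For part (2), the sum and product cases reduce by a shift (for sums, replace $L_v$ by $L_v+s_v$) or a scale (for products, replace $L_v$ by $p_v L_v$) to standard list colouring of $G$ from lists of size $\chi(G)\log n$, which is handled by Eaton's theorem. The main obstacle is the multiset case, which does not reduce to standard list colouring. I would adapt Eaton's probabilistic argument to the modified regime, noting that his proof depends only on the list sizes and the structure of $G$; it therefore extends to any constraint system in which each edge forbids at most one pair of weights at its endpoints — precisely the structure established in the first paragraph.
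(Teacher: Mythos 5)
Your handling of sums and products is correct and is essentially the paper's route: fix the edge weights arbitrarily from their lists, then shift (for sums) or scale (for products, using that $0$ is excluded from edge lists so $p_v\neq 0$) the vertex lists and invoke the Erd\H{o}s--Rubin--Taylor and Eaton bounds on $\ch(G)$; the paper simply records this as $\chSt(G),\chPt(G)\leq\ch(G)$ and applies those two choosability bounds. The gap is the multiset case of part (2). First, your final paragraph mischaracterizes the constraint you actually derived: it is not ``at most one forbidden pair per edge.'' When the fixed incident-edge multisets satisfy $M_u=M_v$ (which happens, e.g., whenever the two endpoints see the same edge weights), the condition $\{w(u)\}\cup M_u\neq\{w(v)\}\cup M_v$ is exactly $w(u)\neq w(v)$, i.e.\ a full matching between the two lists. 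What your first paragraph really establishes is that each edge imposes a \emph{partial matching} of forbidden pairs between the endpoint lists --- the setting of correspondence (DP-) colouring --- and Eaton's argument does not carry over to that setting: his proof randomly partitions the colours among the $\chi(G)$ colour classes and relies on adjacent vertices drawing from disjoint colour pools, so that distinctness of values is automatic; under matching-type constraints distinct values can still conflict. Indeed the claimed extension is false in the generality you state it: $K_{n,n}$ has $\chi(G)\log n = O(\log n)$, yet its correspondence chromatic number is $\Omega(n/\log n)$ (a result of Bernshteyn). So ``adapting Eaton'' would require a genuinely new argument, which you have not supplied.

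The repair is one line, and it is what the paper does: a total weighting whose sums properly colour the vertices also properly colours them by multisets, since equal multisets have equal sums; hence $\chmt(G)\leq\chSt(G)\leq\ch(G)$, and the multiset bounds in both parts follow from the sum case you already proved (this is the total-weighting analogue of Proposition~\ref{multibound}). Your greedy argument for part (1) is fine as written, because there the only property needed is ``at most one forbidden value of $w(v)$ per already-weighted neighbour,'' which you verified correctly for all three operations.
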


Note that the bound $\chSt(G) \leq \Delta(G) + 1$ is omitted since a stronger result is given in Theorem \ref{chStbound}.

We conclude with a brief consideration of colouring vertices by products from list-weightings.  By excluding $0$ from our ground set of potential edge weights, it remains possible that $\chPe(G) \leq 3$ (were we not to exclude $0$, the lists $\{-1,0,1\}$ would not permit a vertex colouring for a nice non-bipartite graph).  However, by assigning every vertex and edge the list $\{1,-1\}$, one sees that graphs which are not bipartite cannot be totally weighted from these lists in such a way as to properly colour vertices by products.  So, while the List 1-2 Conjecture posits that $\chSt(G) \leq 2$ for any graph $G$, a similar bound for total list-weightings is not possible for all graphs when colouring by products.

\subsection{Vertex list-weightings}\label{additivelistsection}

Denote by $\chSv(G)$ the smallest $k$ such that $G$ has an additive colouring from any assignment of lists of size $k$ to the vertices of $G$, and call $\chSv(G)$ the {\bf additive choosability number} of $G$.  Such list-weightings were considered by Czerwi\'nski et al. \cite{CGZ}, who show that 
if $T$ is a tree, then $\chSv(T) \leq 2$, and if $G$ is a bipartite planar graph, then $\chSv(G) \leq 3$.  

The following conjecture follows in the spirit of the List 1-2-3 Conjecture:

\begin{AClistconj}[Seamone \cite{mythesis, Ben-CN}]
For any graph $G$, $\chSv(G) = \Sv(G)$.
\end{AClistconj}

While there is no known bound on $\chSv(G)$ in terms of $\Sv(G)$ (a bound in terms of $\ch(G)$ or $\chi(G)$ would also be of interest), one may bound $\chSv(G)$ by a function of the maximum degree of $G$:

\begin{thm}[Akbari, Ghanbari, Manaviyat, Zare \cite{AGMZ}]\label{listluckybound}
For any graph $G$, $\chSv(G) \leq \Delta(G)^2 - \Delta(G) + 1$.
\end{thm}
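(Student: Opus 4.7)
The plan is to apply Alon's Combinatorial Nullstellensatz to the graph polynomial
$$P = \prod_{uv \in E(G)} (X_u - X_v), \qquad X_u := \sum_{x \in N(u)} y_x,$$
where $y_v$ is an indeterminate for each $v \in V(G)$. A vertex weighting $w : V(G) \to \R$ is an additive colouring precisely when $P(w) \neq 0$, so it suffices to exhibit a monomial $\prod_v y_v^{t_v}$ of $P$ with nonzero coefficient whose exponents satisfy $t_v \leq \Delta(G)^2 - \Delta(G)$ for every $v \in V(G)$; the Combinatorial Nullstellensatz will then supply an additive colouring from any list assignment of size $\Delta(G)^2 - \Delta(G) + 1$.

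First I would verify that $P$ is not the zero polynomial. For every edge $uv \in E(G)$, since $G$ is simple, $u \in N(v) \setminus N(u)$ and $v \in N(u) \setminus N(v)$; hence $y_u$ and $y_v$ appear in $X_u - X_v$ with coefficients $-1$ and $+1$ respectively. Each factor is therefore a nonzero linear form and $P$ is a product of nonzero polynomials in the integral domain $\R[y_v : v \in V(G)]$.

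Next I would analyse the individual-variable degree of $P$. The variable $y_v$ has nonzero coefficient in the factor $X_x - X_y$ exactly when $v$ is adjacent to precisely one of $x, y$; equivalently, when the edge $xy$ lies in the cut $(N(v),\, V(G) \setminus N(v))$. Summing over the endpoints of this cut in $N(v)$ gives
$$\sum_{u \in N(v)} (\deg(u) - |N(u) \cap N(v)|) \leq \deg(v) \cdot \Delta(G) \leq \Delta(G)^2.$$
This naive bound already yields $\chSv(G) \leq \Delta(G)^2 + 1$, so the crux of the theorem is to shave a factor of $\Delta(G)$ from this estimate.

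The main obstacle is that I need to locate a top-degree monomial of $P$ whose exponents are individually at most $\Delta(G)^2 - \Delta(G)$, rather than merely to bound the maximum individual-variable degree. The hard case is when $v$ attains the maximum degree and $G[N(v)]$ is independent, since the cut can then reach $\Delta(G)^2$. Fortunately, in precisely these extremal cases the polynomial $P$ acquires substantial structure (as is visible in $K_{\Delta,\Delta}$, where $P$ collapses to $(Y-U)^{\Delta(G)^2}$ with $Y,U$ the two sides' variable sums, admitting monomials with exponents as low as $\Delta(G)/2$). I would complete the argument by an Alon--Tarsi-style orientation argument, in the spirit of Seamone's proof of Theorem \ref{chSebound}: choose a suitable orientation of $G$ and use a signed-count (permanent) calculation to show that some monomial $\prod_v y_v^{t_v}$ with $t_v \leq \Delta(G)^2 - \Delta(G)$ has nonzero coefficient in $P$.
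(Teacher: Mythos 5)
Your setup is sound as far as it goes: the polynomial $P=\prod_{uv\in E(G)}(X_u-X_v)$ with $X_u=\sum_{x\in N(u)}y_x$ is homogeneous of degree $|E(G)|$, each factor is a nonzero linear form (it contains $y_v-y_u$), and the degree of $y_v$ in $P$ is at most the number of edges with exactly one end in $N(v)$, hence at most $\Delta(G)^2$. By the Combinatorial Nullstellensatz this yields $\chSv(G)\le \Delta(G)^2+1$, which one also gets from the degeneracy bound of Theorem \ref{additivechoosability}. But everything past that point --- which is the actual theorem --- is missing. The whole content of the bound $\Delta(G)^2-\Delta(G)+1$ is the existence, for \emph{every} graph, of a monomial of $P$ with nonzero coefficient in which every exponent is at most $\Delta(G)^2-\Delta(G)$, and your final step merely asserts that ``an Alon--Tarsi-style orientation argument'' with ``a signed-count (permanent) calculation'' will produce one. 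No such argument is given, and it is not clear the tools you point to apply: reorienting an edge only replaces the factor $X_u-X_v$ by its negative, so orientations change $P$ by a global sign and give no control over which monomials survive (unlike the genuine Alon--Tarsi setting for $\prod(y_u-y_v)$, where orientations enter the computation of coefficients). Likewise the permanent machinery behind Theorem \ref{chSebound} is tailored to the edge-weighting polynomial, whose variables are indexed by edges and each occur in only about $2\Delta(G)$ factors; here a single variable can occur in $\Delta(G)^2$ factors, and shaving the per-variable exponent by $\Delta(G)$ without cancellation is precisely the open step.

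Your heuristic that the extremal cases ``acquire substantial structure'' is also unsupported. $K_{\Delta,\Delta}$ is special, but consider, say, the incidence graph of a projective plane: it is $\Delta$-regular, triangle- and $C_4$-free, every neighbourhood is independent, and every variable $y_v$ occurs in exactly $\Delta^2$ factors of $P$, yet $P$ exhibits no product collapse of the kind you exploit in $K_{\Delta,\Delta}$; such graphs already require the full monomial-selection argument you have deferred. Note finally that this survey states the theorem with a citation to Akbari, Ghanbari, Manaviyat, and Zare \cite{AGMZ} and does not reproduce their proof, so I cannot certify that your Nullstellensatz framing matches theirs in detail; but as written your proposal reproves only the easy $\Delta(G)^2+1$ bound and stops exactly where the real work begins, so it cannot be accepted as a proof of the stated result.
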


The following general bound can be proven with a straightforward inductive argument:

\begin{thm}\label{additivechoosability}
If $G$ is a $d$-degenerate graph, then $\chSv(G) \leq d\Delta(G) + 1$.
\end{thm}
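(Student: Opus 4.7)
The plan is to proceed by induction on $n = |V(G)|$, exploiting the fact that $d$-degeneracy supplies a vertex of degree at most $d$. The base case $n=1$ is trivial. For the inductive step, pick $v \in V(G)$ with $\deg_G(v) \leq d$ and set $G' := G - v$. Then $G'$ is $d$-degenerate with $\Delta(G') \leq \Delta(G)$, so given any list assignment $L$ to $V(G)$ with $|L(u)| = d\Delta(G) + 1$ for each $u$, the inductive hypothesis applied to $G'$ (with $L$ restricted to $V(G')$, whose list sizes dominate $d\Delta(G') + 1$) yields an additive colouring $w'$ of $G'$. The remaining task is to extend $w'$ to an additive colouring $w$ of $G$ by choosing $w(v) \in L(v)$ appropriately.

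To analyze the constraints on $w(v)$, write $f'(u) = \sum_{x \in N_{G'}(u)} w'(x)$ for $u \in V(G')$. After extension, the colour of $u \in V(G')$ is $f(u) = f'(u) + w(v)\cdot\mathbf{1}[u \in N(v)]$, while $f(v) = \sum_{x \in N(v)} w'(x)$ does not depend on $w(v)$. I partition the edges of $G$ into three types. First, for $xy \in E(G')$ with either $\{x,y\} \cap N(v) = \varnothing$ or $\{x,y\} \subseteq N(v)$, the $w(v)$ contributions to $f(x)$ and $f(y)$ cancel or are absent, so $f(x) \neq f(y)$ reduces to $f'(x) \neq f'(y)$, which already holds. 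Second, for $uv' \in E(G')$ with exactly one endpoint (say $u$) in $N(v)$, the constraint becomes $w(v) \neq f'(v') - f'(u)$, forbidding one value. Third, for edges $vu$ with $u \in N(v)$, the constraint becomes $w(v) \neq \sum_{x \in N(v)} w'(x) - f'(u)$, again forbidding one value.

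Counting the forbidden values: each $u \in N(v)$ contributes at most $\deg_G(u) - 1 \leq \Delta(G) - 1$ edges of the second type (neighbours of $u$ in $G'$, i.e.\ $N_G(u) \setminus \{v\}$), so across the $|N(v)| \leq d$ choices of $u$ this type excludes at most $d(\Delta(G) - 1)$ values; the third type excludes at most $|N(v)| \leq d$ further values. The total is at most $d(\Delta(G) - 1) + d = d\Delta(G)$, and since $|L(v)| = d\Delta(G) + 1$, a valid choice of $w(v)$ exists, completing the inductive extension.

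The main (mild) obstacle is the first-type case: edges of $G'$ with \emph{both} endpoints in $N(v)$ must be shown to contribute no constraint on $w(v)$, because the $w(v)$ additions to the two endpoint colours cancel. Without this cancellation the count in the preceding paragraph would balloon beyond $d\Delta(G)$ and the list size $d\Delta(G) + 1$ would not suffice; careful verification of this case analysis is the crux of the argument.
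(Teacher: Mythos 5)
Your proof is correct, and it is precisely the ``straightforward inductive argument'' the paper alludes to without writing out: delete a vertex $v$ of degree at most $d$, apply induction to $G-v$, and observe that choosing $w(v)$ must only avoid at most $\sum_{u\in N(v)}\deg_G(u)\le d\Delta(G)$ forbidden values (one per edge leaving $N(v)$ to a non-neighbour of $v$ and one per edge at $v$, with edges inside $N(v)$ imposing no constraint since the $w(v)$ contribution cancels), which the list size $d\Delta(G)+1$ accommodates. Your handling of the cancellation for edges with both ends in $N(v)$ and the monotonicity remark for applying the induction hypothesis with the larger lists are exactly the points that make the count close, so nothing is missing.
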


It the easily follows that $\chSv(G) \leq \Delta(G)\left(\chi(G) - 1\right) + 1$ if $G$ is chordal, $\chSv(G) \leq 2\Delta+1$ if $G$ is $K_4$-minor free, and $\chSv(G) \leq 5\Delta(G)+1$ if $G$ is planar.
It is also shown in \cite{mythesis,Ben-CN} that $\chSv(G) \leq \chi(G)$ when $G$ is a complete multipartite graph (in fact, a slightly stronger statement is proven, one which generalizes a result of Chartrand, Okamoto, and Zhang \cite{COZ} who considered $\Sv(G)$ for regular, complete multipartite graphs).

\section{Variation IV:  Edge colourings}\label{ch:intro:var4}

Each section so far has dealt with the problem of properly colouring the {\em vertices} of a graph given an edge weighting, vertex weighting, or total weighting.  We now define how one might similarly define edge colourings and total colourings from a weighting.

Given an edge $k$-weighting, we colour an edge $e \in E(G)$ with the sum, product, multiset, or set of weights from edges adjacent to $e$.  If the resulting edge colouring is proper, then the $k$-edge weighting of $G$ is a {\bf proper edge colouring by sums, products, multisets or sets}, respectively.  The smallest $k$ for which such colourings exist for a graph $G$ are denoted $\eSe(G)$, $\ePe(G)$, $\eme(G)$ and $\ese(G)$, respectively.

These parameters do not appear in the present literature, but a few bounds are easily obtained by noting that each is a particular case of its vertex-weighting vertex-colouring analog, e.g., $\eSe(G) = \Sv(L(G))$, $\echSe(G) = \chSv(L(G))$, etc.  In the particular case of $\eSe(G)$, we have a few easy to prove upper bounds.  Theorem \ref{luckybound} bounds $\eSe(G)$ in terms of the acyclic chromatic number of $L(G)$.  The case of proper colouring by multisets also has a bound which follows from previous work.

\begin{prop}
For any graph $G$, $\eme(G) \leq \chi'(G)$.
\end{prop}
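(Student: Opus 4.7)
The plan is to directly mimic the proof of Proposition \ref{multilucky}, transposing the argument from the vertex setting to the edge setting (morally, by applying that proposition to the line graph $L(G)$, since $\chi'(G) = \chi(L(G))$).

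First, I would take a proper edge $\chi'(G)$-coloring $c \colon E(G) \to [\chi'(G)]$ and reinterpret it as an edge $\chi'(G)$-weighting of $G$. For each edge $e \in E(G)$, let $m(e)$ denote the multiset of weights of the edges adjacent to $e$. It suffices to show $m(e) \neq m(f)$ whenever $e$ and $f$ share an endpoint.

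The key observation is that $c(e) \notin m(e)$ for every $e$: since $c$ is a proper edge coloring, every edge sharing an endpoint with $e$ receives a color different from $c(e)$. Now suppose $e$ and $f$ are adjacent. Then $f$ itself is an edge adjacent to $e$, so $c(f) \in m(e)$; on the other hand, by the key observation, $c(f) \notin m(f)$. Hence $c(f) \in m(e) \setminus m(f)$, so $m(e) \neq m(f)$, as required.

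There is essentially no obstacle here: the argument is the line-graph translation of Proposition \ref{multilucky}, and the conventions of Section \ref{ch:intro:var4} (that adjacency of edges means sharing an endpoint, and that the multiset at $e$ is drawn from edges adjacent to $e$) ensure the translation is literal. The only minor sanity check is the degenerate situation of isolated edges or $K_2$-components, where $m(e)$ is empty; but then $e$ has no neighbor edges to distinguish from, so nothing is required.
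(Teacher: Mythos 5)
Your proof is correct and is essentially the paper's argument: the paper simply cites Proposition \ref{multilucky} applied to $L(G)$ together with the identities $\eme(G) = \mv(L(G))$ and $\chi'(G) = \chi(L(G))$, which is exactly the line-graph translation you spell out.
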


\begin{proof}
By Proposition \ref{multilucky}, $\mv(L(G)) \leq \chi(L(G))$.  Since $\eme(G) = \mv(L(G))$, the result follows.
\end{proof}

One may also colour edges from a vertex or total weighting, or by list weightings of each type; such scenarios have yet to be studied.



A total $k$-weighting of a graph $G$, $w: V(G) \cup E(G) \rightarrow [k]$ is a {\bf proper edge colouring by sums} if the edge colouring function $c(uv) =  w(u) + w(v) + w(uv)$ is proper. 
The smallest value of $k$ such that there exists a total $k$-weighting which is a proper edge colouring by sums is denoted $\eSt(G)$.
While one may easily define similar parameters for proper colourings by products, multisets, or sets, only $\eSt(G)$ has been explicitly studied in published literature.  

\begin{prop}[Brandt, Budajov\'a, Rautenbach, Stiebitz \cite{BBRS}]
If a graph $G$ has maximum degree $\Delta(G) = \Delta$, then $\eSt(G) \leq \frac{\Delta}{2} + O(\sqrt{\Delta\log{\Delta}})$. 
\end{prop}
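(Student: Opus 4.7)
The plan is to use a randomized construction combined with concentration inequalities, in the spirit of Kalkowski-type arguments for analogous problems. Set $k = \lceil \Delta/2 \rceil + C\sqrt{\Delta \log \Delta}$ for a sufficiently large absolute constant $C$. First observe that $c$ is a proper edge colouring if and only if, for every vertex $u$ and every pair of distinct incident edges $e_1 = uv$, $e_2 = uw$, one has $w(v) + w(e_1) \neq w(w) + w(e_2)$; equivalently, defining $l_u(e) := w(v) + w(e)$ for $e = uv \ni u$, the values $\{l_u(e) : e \ni u\}$ must be pairwise distinct. Each such value lies in $[2, 2k]$, giving a ``palette'' of size $2k-1 \approx \Delta + 2C\sqrt{\Delta \log \Delta}$ for at most $\Delta$ edges at each vertex.

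I would first assign the vertex weights $w(v) \in [k]$ uniformly and independently at random. Since the expected number of neighbours $v$ of $u$ with $w(v) = t$ is at most $\Delta/k \leq 2 + o(1)$, a Chernoff bound applied at each pair $(u,t)$, followed by a union bound over all $O(|V|\cdot k)$ such pairs, shows that with positive probability no value $t$ is attained by more than $\Delta/k + O(\sqrt{(\Delta/k) \log \Delta}) = O(\sqrt{\log \Delta})$ neighbours of any vertex. Fix such a ``balanced'' vertex weighting $w|_V$.

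Conditional on this choice, it remains to produce edge weights $w(e) \in [k]$ so that the sums $l_u(e)$ are distinct at each vertex $u$. Partitioning the edges incident to $u$ by the value of $w(v)$ at the other endpoint, edges within a common part need distinct edge weights, while edges across different parts must avoid specific ``shifted'' collisions of the form $w(e_1) - w(e_2) = w(w) - w(v)$. By the concentration step, each part at $u$ has size $O(\sqrt{\log \Delta})$, and the total number of pairwise constraints on a single edge $e$ is controlled by this quantity multiplied by the degree at each endpoint. I would then apply the Lov\'asz Local Lemma to random edge weights drawn from $[k]$, with one bad event per conflicting pair, to conclude that a valid edge weighting exists provided $k \geq \Delta/2 + \Omega(\sqrt{\Delta \log \Delta})$.

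The main obstacle will be the coupling between the two endpoints of an edge: the weight $w(e)$ participates simultaneously in the palettes at both $u$ and $v$, so vertex-by-vertex reasoning of Vizing type cannot be invoked directly. Controlling the overall dependency density in the LLL application, and balancing it against the modest slack in $k$, is where the $\sqrt{\Delta \log \Delta}$ error term is forced: the concentration term must be large enough to absorb both the conflict count per edge (dictated by the vertex-weight collision bound) and a $\log \Delta$ factor from the LLL dependency count, but small enough to preserve the leading $\Delta/2$. Making this trade-off precise is the core technical step.
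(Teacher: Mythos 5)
Your reformulation of properness as injectivity of $e\mapsto w(v)+w(e)$ over the edges at each vertex is correct, but the final step --- independent uniform edge weights from $[k]$ plus the Lov\'asz Local Lemma with one bad event per adjacent pair --- cannot work, and no choice of the constant $C$ repairs it, because the obstruction sits in the leading order rather than in the error term. For adjacent edges $e_1=uv$, $e_2=uw$ the conflict $w(e_1)-w(e_2)=w(w)-w(v)$ has probability $(k-|w(w)-w(v)|)/k^2$, which is $\Theta(1/k)=\Theta(1/\Delta)$ on average over the vertex labels; each edge lies in about $2\Delta$ such pairs, so the symmetric LLL quantity is $p\cdot D\approx (1/k)\cdot 4\Delta\approx 8$, far above $1/e$, and the expected number of conflicts at a single vertex is $\Theta(\Delta)$. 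Making $p\cdot D$ small enough for any standard form of the Local Lemma forces $k=\Omega(\Delta)$ with a constant well above $1/2$, i.e.\ it could only yield a bound of the form $c\Delta$ with $c>1/2$, not $\Delta/2+O(\sqrt{\Delta\log\Delta})$. The balancing preprocessing does not help here: it only limits the equal-label (zero-shift) pairs at a vertex, while the $\Theta(\Delta)$ nonzero-shift pairs are, up to constants, just as likely to be violated. (A secondary issue: with mean $\Delta/k\approx 2$, the maximum number of neighbours sharing a label is $\Theta(\log\Delta/\log\log\Delta)$ with inverse-polynomial failure probability, not $O(\sqrt{\log\Delta})$, and to get a statement independent of $|V|$ that step should itself be a Local Lemma application rather than a union bound over $O(|V|\cdot k)$ pairs; but this is minor compared with the collapse of the last step.)

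A route that does achieve the stated bound (this survey only cites \cite{BBRS} and does not reproduce the proof, but the argument is of the following type) avoids one-shot randomness for the edge weights entirely. Use only the two extreme vertex labels $1$ and $k$: a uniformly random $\{1,k\}$-labelling, analysed with a Chernoff bound and the Local Lemma (each vertex event depends only on events within distance two, so dependency degree at most $\Delta^2$), yields a labelling in which every vertex has at most $\Delta/2+O(\sqrt{\Delta\log\Delta})$ neighbours of each label. This splits $E(G)$ into three subgraphs according to the endpoint labels, and the admissible values of $c(uv)=w(u)+w(v)+w(uv)$ on the three subgraphs lie in three length-$k$ intervals that become pairwise disjoint after discarding $O(1)$ boundary values. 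Within each subgraph the requirement is then exactly a proper edge colouring from a palette of size $k-O(1)$; since each subgraph has maximum degree at most $\Delta/2+O(\sqrt{\Delta\log\Delta})$, Vizing's theorem provides it, and the edge weight is recovered as $w(uv)=c(uv)-w(u)-w(v)\in[k]$, giving $\eSt(G)\le\frac{\Delta}{2}+O(\sqrt{\Delta\log\Delta})$. The essential difference from your plan is that the hard injectivity constraints are delegated to an exact edge-colouring theorem applied to pieces of maximum degree roughly $\Delta/2$, rather than to a single round of independent random edge weights, which is precisely the step the Local Lemma cannot rescue.
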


Brandt et al. also establish bounds for 
trees and complete graphs, and give necessary and sufficient conditions for a cubic graph $G$ to have $\eSt(G) = 2$.

The topic of weightings and list-weightings which are proper total colourings via binary operations remains unstudied.

\section{Distinguishing colourings and irregularity strength}\label{global}

As mentioned, one motivation for the 1-2-3 Conjecture is the study of graph irregularity strength, which is is the smallest positive integer $k$ for which there exists an edge weighting $w: E(G) \to [k]$ such that $\sum_{e \ni u} w(e) \neq \sum_{e \ni v} w(e)$ for all $u,v \in V(G)$.  
Irregularity strength and its variations have generated a significant amount of interest since the topic's inception in 1986.  We will only highlight a few of the more significant results in the field here; the reader is referred to \cite{survey}, \cite{Leh91}, and \cite{West_IS} for a more complete survey of results on graph irregularity strength.

The original notation for the irregularity strength of a graph $G$, given by Chartrand et al. \cite{CJL+}, is $\textup{s}(G)$; to maintain consistency with the notation developed so far, we will denote it $\textup{s}_{\Sigma}^e(G)$.  There is a natural ``global" variation of each of the ``local'' parameters defined so far.  For the local parameters $\chi_x^y(G)$, ${\chi'}_x^y(G)$, and ${\chi''}_x^y(G)$, where $x \in \left\{\sum, \prod, m, s, \sigma, \sigma^* \right\}$ and $y \in \left\{e,v,t\right\}$, let $\textup{s}_x^y(G)$, ${\textup{s}'}_x^y(G)$, and ${\textup{s}''}_x^y(G)$ denote their respective ``global" counterparts.  Similarly, let $\textup{ls}_x^y(G)$, ${\textup{ls}'}_x^y(G)$, and ${\textup{ls}''}_x^y(G)$ denote the global versions of $\ch_x^y(G)$, ${\ch'}_x^y(G)$, and ${\ch''}_x^y(G)$, respectively.

A major conjecture on the topic of irregularity strength was one of Aigner and Triesch \cite{AT90}, who asked if $\textup{s}_{\Sigma}^e(G) \leq |V(G)|-1$ for every nice graph.  This conjecture was answered in the affirmative by Nierhoff \cite{Nie}.  The following is the best known irregularity strength bound:

\begin{thm}[Kalkowski, Karo\'nski, Pfender \cite{KKP3}]
For any nice graph $G$, $\textup{s}_{\Sigma}^e(G) \leq \lceil 6n/\delta(G) \rceil$.
\end{thm}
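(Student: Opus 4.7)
The plan is to mimic the algorithmic style of Kalkowski, Karo\'nski, and Pfender's proof of Theorem \ref{sum5}. Let $n = |V(G)|$, $\delta = \delta(G)$, and set $k = \lceil 6n/\delta \rceil$; the goal is to produce an edge weighting $w : E(G) \to [k]$ whose vertex sums are pairwise distinct.

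The construction has three ingredients. First, I would fix a vertex order $v_1, v_2, \ldots, v_n$, most naturally in non-decreasing order of degree, so that the low-degree vertices (which have the least sum-flexibility) are handled first. Second, I would pre-assign to each $v_i$ a distinct target value $t_i$ drawn from a carefully chosen set of admissible sums; since the final sum at a vertex of degree $d$ must lie in $[d, kd]$, the targets for low-degree vertices must fit inside $[\delta, k\delta]$, which has length $k\delta \geq 6n$. This ample room lets me spread $n$ targets so that consecutive values are separated by roughly $k\delta/n \approx k/6$. Third, I would process vertices one by one: when handling $v_i$, the weights on edges to already-processed vertices are frozen, while the weights on edges to unprocessed vertices $v_j$ with $j>i$ remain adjustable inside a prescribed sub-window of $[1,k]$. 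These remaining weights are chosen so that $S(v_i) = t_i$ exactly. Once fixed, $S(v_i)$ never changes again since later adjustments only touch edges between pairs of unprocessed vertices.

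The key lemma to establish is that at every step $i$, the achievable values of $S(v_i)$ cover an interval of width at least the target spacing. The standard device is to restrict weight modifications at each step to a fixed sub-window of $[1,k]$ of width roughly $k/3$, so that each of the $\deg_G^{>i}(v_i) \geq \delta - (i-1)$ unprocessed incident edges contributes $\Theta(k)$ units of flexibility, while still leaving headroom so that subsequent re-adjustments at $v_j$ ($j>i$) cannot push any edge outside $[1,k]$. For vertices processed late (where few unprocessed incident edges remain), the niceness hypothesis and the initial choice of a median edge weight ensure that the residual flexibility still comfortably exceeds the $k/6$ spacing.

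The main obstacle, and the source of the constant $6$, is precisely this bookkeeping. A naive version of the argument establishes only $k \geq cn/\delta$ for some larger constant $c$; extracting $c=6$ requires balancing three quantities simultaneously: the target spacing ($\approx k\delta/n$), the size of the per-step weight-modification window ($\approx k/3$), and the number of unprocessed incident edges still available at each step. A two-sided adjustment window centred at the median weight $\lceil (k+1)/2\rceil$ is essential for ensuring that even after many edges at $v_i$ have been frozen (because their other endpoints came earlier in the order), the remaining edges collectively permit bidirectional sum adjustments large enough to hit $t_i$. Verifying this trade-off carefully is the crux of the proof.
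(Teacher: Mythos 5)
This survey does not actually prove the statement; it is quoted from Kalkowski, Karo\'nski, and Pfender \cite{KKP3}, so your sketch can only be judged on its own merits against the strategy of that paper. The general flavour is right (a one-pass algorithm in a degree-based vertex order, with sums steered into well-separated values and edge weights kept adjustable within sub-windows of $[1,k]$), but as written the core mechanism does not work, and the write-up is an outline rather than a proof.

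The fatal gap is your ``set $S(v_i)=t_i$ exactly and freeze it'' scheme. At step $i$ the only weights you allow yourself to choose are those on edges from $v_i$ to unprocessed vertices; but some vertices --- certainly the last one in the order, and in general any vertex all of whose neighbours precede it --- have \emph{no} such edges, so at their step there is zero adjustability and no way to hit a pre-assigned target. Invoking niceness and a ``median initial weight'' gives no flexibility once every incident edge is frozen at an exact value. The actual Kalkowski-style arguments (compare the proof idea behind Theorem \ref{sum5}) avoid this precisely by \emph{not} pinning earlier sums to exact values: edges to already-processed vertices remain modifiable within a limited window, and each processed vertex's sum is only confined to a small prescribed set or interval, chosen so that later modifications cannot create collisions. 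You gesture at this (``headroom so that subsequent re-adjustments at $v_j$ cannot push any edge outside $[1,k]$'') but it directly contradicts your claim that $S(v_i)$ never changes after step $i$; the proposal never resolves which regime it is in. The quantitative bookkeeping is also off: with $k=\lceil 6n/\delta\rceil$, packing $n$ targets into $[\delta,k\delta]$ gives spacing about $k\delta/n\approx 6$, a constant, not $k/6\approx n/\delta$, so the advertised balance between spacing, a window of width $\approx k/3$, and per-edge flexibility $\Theta(k)$ does not cohere, and the constant $6$ is never actually derived. Finally, achievability of each target given the frozen backward contributions, and distinctness of sums across different degree classes, are asserted rather than verified. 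In short, this is a plausible guess at the shape of the argument in \cite{KKP3}, but it has a genuine hole at the late-processed vertices and inconsistent bookkeeping, and so does not constitute a proof of the bound.
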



Recently, Anholcer and Cichacz proved a group analog of Aigner and Triesch's conjecture.  They define the {\bf group irregularity strength} of a graph $G$, $s_g(G)$, to be the smallest value of $s$ such that, for any abelian group $(\Gamma,+)$ of order $s$, there exists an edge weighting of $G$ from $\Gamma$ such that distinct vertices receive distinct sums of edge weights.

\begin{thm}[Anholcer, Cichacz \cite{AC2}]
For any connected graph of order $n \geq 3$, $n \leq s_g(G)  \leq n+2$.
\end{thm}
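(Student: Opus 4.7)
The lower bound $s_g(G) \geq n$ is immediate: a valid weighting induces an injection $f : V(G) \to \Gamma$, which forces $|\Gamma| \geq n$.

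For the upper bound, the plan is to find some $s \in \{n, n+1, n+2\}$ such that every abelian group $\Gamma$ of order $s$ admits a valid edge weighting. The central observation is that among any three consecutive integers at least one is odd; I would take $s^*$ to be such an odd value. Every abelian group of order $s^*$ then has odd order, so the doubling map $g \mapsto 2g$ is a bijection on $\Gamma$. This eliminates the parity obstruction arising from $\sum_v f(v) = 2\sum_e w(e)$, which is precisely what blocks certain even-order cyclic groups (for instance $\mathbb{Z}_n$ with $n \equiv 2 \pmod 4$, where $\sum_{g \in \mathbb{Z}_n} g$ fails to lie in $2\mathbb{Z}_n$).

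The construction I have in mind uses a spanning tree $T$ of $G$. I would weight all non-tree edges by $0_\Gamma$ so that only the edges of $T$ contribute to the vertex sums, and then root $T$ at some vertex $r$. Processing the remaining $n-1$ vertices in post-order (children before their parent), at each $v \neq r$ the parent edge $e_v$ is the unique tree edge at $v$ whose weight is still undecided; since $w(e_v)$ may be any element of $\Gamma$, I can force $f(v)$ to equal any desired group element. Because $|\Gamma| = s^* \geq n$, at each step I can pick $f(v)$ distinct from every previously chosen sum, and I would deliberately reserve one or two unused elements of $\Gamma$ to preserve flexibility for the final step.

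The main obstacle is the root $r$: once every other edge is weighted, $f(r)$ is forced by the identity $f(r) = 2\sum_e w(e) - \sum_{v \neq r} f(v)$, and nothing prevents an a priori collision with one of the $n-1$ already placed sums. To cope with this, I would exploit the freedom afforded by odd-order $\Gamma$: because $g \mapsto 2g$ is a bijection, the total $\sum_e w(e)$ can be shifted by reweighting non-tree edges (paired with compensating changes on $T$), and such joint adjustments shift $f(r)$ while leaving the previously fixed non-root sums intact. The heart of the argument is showing that these combined adjustments sweep out enough values of $f(r)$ to avoid all $n-1$ forbidden ones. When $G$ has few or no non-tree edges (the extremal case being $G$ itself a tree), the two-element slack in $|\Gamma| \leq n+2$ becomes essential: if $s^* = n$ does not produce a valid weighting for some $\Gamma$, one passes instead to $s^* \in \{n+1, n+2\}$ of the appropriate parity to gain the additional room. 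Verifying this reduction cleanly across all odd $s^* \leq n+2$ and all abelian groups of that order is where I expect the main technical work to lie.
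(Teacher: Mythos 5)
The survey states this theorem of Anholcer and Cichacz without proof, so I am judging your argument on its own terms. Your lower bound is correct, your identification of the parity obstruction (the unique involution of $\Z_n$ when $n \equiv 2 \pmod 4$, via $\sum_v f(v) = 2\sum_e w(e)$) is correct, and restricting to an odd $s^* \in \{n, n+1, n+2\}$ is a legitimate simplification for an upper bound. The spanning-tree construction also works up to the last step: processing non-root vertices in post-order does let you realize any $n-1$ distinct values as the non-root sums. But the proof has a genuine gap exactly where you yourself place ``the main technical work'': you never show that the forced value $f(r)$ can be made to avoid the $n-1$ sums already placed. Moreover, the mechanism you propose for this is not obviously adequate. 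Adding $g$ to a non-tree edge $uv$ and cancelling it along the tree paths from $u$ and from $v$ back to $r$ changes $f(r)$ by $\pm g \pm g$, i.e.\ by an element of $\{2g, 0, -2g\}$ depending on the parities of the two path lengths, so in unfavourable parity configurations the net effect on $f(r)$ can be identically zero; no argument is given that these adjustments sweep out enough values. Worse, when $G$ is a tree there are no non-tree edges at all, so the mechanism disappears precisely in the extremal case you single out.

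The fallback ``pass to $s^* \in \{n+1, n+2\}$ of the appropriate parity'' does not rescue this as stated. Having committed to odd order, when $n$ is even your only option is $s^* = n+1$ (one spare element), and when $n$ is odd your options are $n$ (no spare) or $n+2$ (two spares); so the ``two-element slack'' is not generally available. Even granted a spare value $x$, the natural repair --- re-targeting the unique colliding vertex $v$ to $x$ --- changes the weights along the entire $v$--$r$ tree path and therefore shifts $f(r)$ by $\pm(x - f(v))$ as well, so a single swap can create a new collision; resolving this requires a genuine counting or case analysis (possibly over the choice of which vertex to re-target, or which element to reserve), and none is supplied. In short: the skeleton is plausible and the reductions (injectivity lower bound, odd-order choice, tree-based forcing of $n-1$ sums) are sound, but the root-collision step is the actual content of the theorem and it is missing. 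Note also that the original argument of Anholcer and Cichacz does not reduce to odd-order groups, so completing your route would give a different (though less informative) proof than theirs; as written, however, it is not yet a proof.
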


As with their result on group sum chromatic numbers (Theorem \ref{groupsum}), Anholcer and Cichacz characterize which graphs have group irregularity strength $n$, $n+1$, and $n+2$.
  
Note that irregular weightings are closely related to antimagic labelings.  A graph $G$ with $m$ edges is called {\bf antimagic} if there exists a bijection between $E(G)$ and $\{1, \ldots, m\}$ such that every vertex has a distinct sum of edge weights from its adjacent edges.  Such a bijection is called an {\bf antimagic labeling}.  Antimagic labelings and their variations are widely studied; a thorough survey may be found in Chapter 6 of \cite{survey}.

Aigner et al. \cite{ATT} introduce a version of irregularity strength where each vertex is assigned the multiset of edge weights from incident edges rather than the sum; the corresponding parameter is $\textup{s}_{m}^e(G)$ in our notation and is called the {\bf multiset irregularity strength} of $G$.  The following result is proven using probabilistic methods:

\begin{thm}[Aigner, Triesch, Tuza \cite{ATT}]\label{ATT}
If $G$ is a $d$-regular graph, then $\textup{ls}_{m}^e(G) \in \Theta(n^{1/d})$.
\end{thm}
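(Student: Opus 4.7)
The plan is to prove matching lower and upper bounds of order $n^{1/d}$. For the lower bound I would argue purely by counting: any weighting of $G$ from a ground set of size $k$ assigns to each vertex a multiset of size $d$ drawn from $[k]$, and the number of such multisets is $\binom{k+d-1}{d}$. For the $n$ vertices to receive pairwise distinct multisets this quantity must be at least $n$, and using $\binom{k+d-1}{d}\leq (k+d)^d/d!$ one obtains $k \geq (d!\,n)^{1/d}-d = \Omega(n^{1/d})$ for fixed $d$. Since any list assignment is at least as restrictive as a fixed ground set of size $k$, this bound transfers directly to $\textup{ls}_{m}^{e}(G)$.

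For the upper bound I would set $k = C_d\,n^{1/d}$ for a suitable constant depending on $d$, fix an arbitrary assignment $\{L_e\}_{e\in E(G)}$ with $|L_e|=k$, and weight each edge uniformly at random and independently from its own list. Let $A_{u,v}$ be the bad event that $u$ and $v$ receive identical multisets of incident weights. The single-event probabilities split into two regimes. If $u,v$ are non-adjacent, conditioning on the multiset at $u$, the multiset at $v$ can be realised by at most $d!$ sequences, each of probability at most $1/k^d$, so $\Pr[A_{u,v}] \leq d!/k^d$. If $u,v$ share an edge $e$, conditioning on $w(e)$ reduces the problem to $d-1$ independent weights on each side, yielding $\Pr[A_{u,v}]\leq (d-1)!/k^{d-1}$.

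To force all bad events to fail simultaneously, I would apply the asymmetric Lov\'asz Local Lemma. Each $A_{u,v}$ depends only on the at most $2d$ edges incident to $u$ or $v$, so it is mutually independent of every $A_{u',v'}$ with $\{u',v'\}$ disjoint from $N[u]\cup N[v]$, bounding the dependency degree by $O(dn)$. Assigning weights $x_e = 4(d-1)!/k^{d-1}$ to adjacent-pair events and $x_n = 4\,d!/k^d$ to non-adjacent-pair events, the LLL inequalities reduce, via the standard estimate $(1-t)^m \geq 1/2$ when $mt \leq 1/2$, to a single constraint of the form $k^d \geq C'_d\,n$, which is satisfied for $k \geq C_d\, n^{1/d}$ with $C_d$ depending only on $d$.

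The main obstacle is the adjacent-pair case. The per-event probability $(d-1)!/k^{d-1}$ is asymptotically larger than $d!/k^d$ once $k \gg d$, and a naive symmetric LLL application would therefore yield only $k = O(n^{1/(d-1)})$, off by a factor of $n^{1/(d(d-1))}$. The asymmetry is resolved by observing that each adjacent-pair event is adjacent (in the dependency graph) to only $O(d)$ other adjacent-pair events through its two endpoints, while its $O(dn)$ non-adjacent neighbours each contribute the smaller weight $x_n = O(1/n)$; separating these two families as above is what preserves the correct exponent $1/d$, and the remaining algebra is routine once the constants depending on $d$ are tuned to keep every factor $(1-x)^m$ bounded below by $1/2$.
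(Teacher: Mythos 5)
Your argument is sound, but it cannot be compared line-by-line with the paper's proof because the survey gives none for Theorem \ref{ATT}: it only cites Aigner, Triesch and Tuza, notes that their proof is probabilistic, and remarks that it translates to the list setting. Your reconstruction --- a counting lower bound $\binom{k+d-1}{d}\ge n$ together with an asymmetric Local Lemma upper bound that separates adjacent pairs (probability at most $(d-1)!/k^{d-1}$, but only constantly many of them, in terms of $d$, in any dependency neighbourhood) from non-adjacent pairs (probability at most $d!/k^{d}$, up to $O(dn)$ of them) --- is exactly the kind of probabilistic argument the survey alludes to, and the LLL computation does close with $k=C_d\,n^{1/d}$: with $x_e=4(d-1)!/k^{d-1}$ and $x_n=4\,d!/k^{d}$ both products $(1-x_e)^{O(d^2)}(1-x_n)^{O(dn)}$ stay above $1/4$ once $k^{d}\ge C_d'\,n$, so both families of LLL inequalities hold. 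Three points to tighten. First, the counting lower bound applies to weightings from a common ground set $[k]$, so it should be transferred via the monotonicity $\textup{s}_m^e(G)\le \textup{ls}_m^e(G)$ (the uniform assignment $L_e\equiv[k]$ is one admissible list assignment), rather than the claim that ``any list assignment is at least as restrictive'': an arbitrary list assignment may have a large union of lists, and the multiset count does not apply to it directly. Second, an adjacent-pair event $A_{uv}$ is dependent on every edge-event $u'v'$ having an endpoint in $N[u]\cup N[v]$, of which there are $O(d^2)$, not $O(d)$; this is immaterial for fixed $d$ but should be stated correctly. Third, the statement implicitly requires $d\ge 2$ (for $d=1$ the two ends of a matching edge always receive the same multiset), a restriction your proof, like the theorem as quoted, leaves tacit.
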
 

Note that the original result published in \cite{ATT} does not refer explicitly to weighting from lists, however their proofs all directly translate to this setting.  The constant in the upper bound suppressed by the $\Theta$ notation can be improved if one relaxes the colouring method to sequences given any ordering of the edges \cite{Ben-LLL} ($\textup{ls}_{\sigma}^e(G)$ is the {\bf general sequence irregularity strength} of $G$; see Section \ref{ch:intro:var1} for more on colouring by sequences).  More generally, it is shown in \cite{Ben-LLL} that if $G$ is a graph on $n$ vertices with minimum degree $\delta(G) > c\log{n}$ for large enough $c=c(k)$, then $\textup{ls}_\sigma^e(G) \leq k$.  Similar results are also given for total list-weightings.

Anholcer \cite{Anh} studied the {\bf product irregularity strength} of a graph, or $\textup{s}_{\Pi}^e(G)$.  He establishes bounds on $\textup{s}_{\Pi}^e(G)$ when $G$ is a cycle, path or grid.  Pikhurko \cite{Pik} has shown that if a graph is sufficiently large, then $\textup{s}_{\Pi}^e(G) \leq |E(G)|$ (in antimagic labelling terminology, he shows that sufficiently large graphs are product antimagic).

A {\bf vertex-distinguishing edge-colouring} of a graph $G$ is a colouring of the edges such that for any two vertices $u$ and $v$, the set of colours assigned to the set of edges incident to $u$ differs from the set of colours assigned to the set of edges incident to $v$.  Harary and Plantholt \cite{HP85} introduced the study of vertex-distinguishing {\em proper} edge-colourings in 1985; they use $\chi_0(G)$ to denote the smallest value of $k$ such that a graph $G$ has such an edge-colouring, and they call the parameter the {\bf point-distinguishing chromatic index} of $G$.  Note that $\chi_0(G)$ is a global version of $\se(G)$ with the added constraint that the edge weighting is a proper edge-colouring, and hence is an upper bound on $\textup{s}_s^e(G)$.   

\begin{thm}[Burris, Schelp \cite{BS97}]\label{setirrstrength}
If $n_i$ denotes the number of vertices of degree $i$ in a graph $G$, then for any nice graph $G$, $\chi_0(G) \leq C\max\{n_i^{1/i} \,:\, 1 \leq i \leq \Delta(G)\}$, where $C$ is a constant relying only on $\Delta(G)$.
\end{thm}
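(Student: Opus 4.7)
The plan is to prove this bound by a two-stage random construction combined with the asymmetric Lov\'asz Local Lemma. Set $k = C\cdot \max_i n_i^{1/i}$, where $C = C(\Delta)$ is polynomial in $\Delta$ (explicitly, $C = \Theta(\Delta^2)$ will suffice), so that in particular $k \geq \Delta + 1$. By Vizing's theorem, fix a proper edge-colouring $\pi_0: E(G) \to \{1,\ldots,\Delta+1\}$, and then independently assign to each edge $f$ a uniform random refinement $r_f \in \{1,\ldots,m\}$ with $m = \lfloor k/(\Delta+1)\rfloor$. The composite colour $(\pi_0(f), r_f)$ is then a proper edge-colouring of $G$ using at most $k$ colours; the goal is to show that, with positive probability, no two vertices of the same degree receive the same colour set.

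For distinct vertices $u,v$ of common degree $i$, let $B_{u,v}$ be the event that they share a colour set. Since $\pi_0$ is proper, if the base colour sets at $u$ and $v$ differ, then $B_{u,v}$ cannot hold; otherwise, writing $e_c(x)$ for the edge of base colour $c$ at $x$, the event $B_{u,v}$ holds iff $r_{e_c(u)} = r_{e_c(v)}$ for each of the $i$ shared base colours $c$, and (using the nice hypothesis to rule out the degenerate $i=1$ case, and noting that for $i\ge 2$ the vast majority of the $i$ edges involved are distinct) this yields $P(B_{u,v}) \leq (1/m)^i \leq ((\Delta+1)/k)^i$. The key structural observation is that all randomness sits in the mutually independent variables $r_f$, so $B_{u,v}$ depends only on the edges incident to $u$ or $v$, and is therefore independent of every $B_{u',v'}$ with $\{u',v'\}$ disjoint from the closed neighbourhood of $\{u,v\}$ in $G$.

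Feeding this into the asymmetric LLL with weights $x_i = \mathrm{e}\cdot((\Delta+1)/k)^i$ and noting that each $B_{u,v}$ is dependent on at most $(2\Delta+2)\, n_{i'}$ events of each type $i'$, the hypotheses reduce to the single inequality $\sum_i n_i\left((\Delta+1)/k\right)^i = O(1/\Delta)$. Substituting $k \geq C n_i^{1/i}$ turns the left-hand side into a geometric series $\sum_i ((\Delta+1)/C)^i$, which is $O((\Delta+1)/C) = O(1/\Delta^2)$ whenever $C = \Omega(\Delta^2)$. The LLL then guarantees a choice of refinements avoiding every $B_{u,v}$; since vertices of distinct degrees are automatically distinguished by the sizes of their colour sets, the resulting colouring $\pi$ is the desired vertex-distinguishing proper edge $k$-colouring of $G$, establishing $\chi_0(G) \leq k = C\max_i n_i^{1/i}$.

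The principal obstacle is the dependency structure of a naive random proper edge-colouring: the global propriety constraint couples the colour choices on distant edges and blocks any direct application of the Local Lemma. Pre-committing to a Vizing colouring and then randomising only the within-class labels sidesteps this obstruction by restoring genuine independence across edges, at the cost of shrinking the usable palette by a factor of $\Delta+1$. The remaining delicate step is the bookkeeping in the dependency graph, where each $B_{u,v}$ must be charged against all degree-$i'$ bad pairs whose vertex set meets the closed neighbourhood of $\{u,v\}$ in $G$; this accounting is what pins down the final polynomial dependence of $C$ on $\Delta$.
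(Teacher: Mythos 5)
Since this survey states the Burris--Schelp theorem without proof (it is only cited to \cite{BS97}), I can only judge your argument on its own terms. The overall strategy is sound and is the standard probabilistic route to bounds of this type: fix a Vizing colouring $\pi_0$ with $\Delta+1$ classes, randomise only the within-class refinements $r_f$ independently and uniformly over $m=\lfloor k/(\Delta+1)\rfloor$ labels, and kill the bad events $B_{u,v}$ (equal colour sets at two vertices of common degree) with the asymmetric Local Lemma. Your dependency bookkeeping is correct: the events live on disjoint sets of the independent variables $r_f$ unless the pairs meet in $N[u]\cup N[v]$, and there are at most $(2\Delta+2)n_{i'}$ neighbouring events of each type $i'$; vertices of different degrees are separated for free because the composite colouring is proper.

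The one step that fails as written is the probability estimate for adjacent pairs. If $uv\in E(G)$, $\deg u=\deg v=i$, and the two base-colour sets coincide, then the equality coming from the shared base colour of the edge $uv$ holds automatically, so $P(B_{u,v})=(1/m)^{i-1}$, not $(1/m)^{i}$; the parenthetical remark about ``the vast majority of the $i$ edges'' does not repair this. Consequently your reduction of the LLL hypotheses to the single inequality $\sum_i n_i\bigl((\Delta+1)/k\bigr)^i=O(1/\Delta)$ does not cover these events, and weakening the exponent to $i-1$ uniformly would reinstate a stray factor $k/C$ and destroy the geometric-series bound. The patch is routine but necessary: treat adjacent same-degree pairs as a separate event class with weight $e(1/m)^{i-1}$, note that $i\ge 2$ for such pairs (the case $i=1$ is exactly the $K_2$ components excluded by niceness), and observe that each vertex lies in at most $\Delta$ such pairs, so a dependency neighbourhood contains at most $(2\Delta+2)\Delta$ of them, each with $x$-value at most $e(\Delta+1)/k$ up to constants. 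Their total contribution is $O(\Delta^{3}/k)$, which is harmless once $C=\Omega(\Delta^{3})$; with that adjustment (so $C=\Theta(\Delta^{3})$ rather than your claimed $\Theta(\Delta^{2})$, still a function of $\Delta$ alone) the Local Lemma applies and the stated bound follows. Two minor points to record: you need $m\ge 2$, which holds since $k\ge C\ge 2(\Delta+1)$, and the implicit assumption of at most one isolated vertex is an issue inherited from the statement as quoted, not from your argument.
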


Burris and Schelp also make the following two conjectures:

\begin{conj}[Burris, Schelp \cite{BS97}]
\begin{enumerate}
\item Let $G$ be a simple graph with no more than one isolated vertex and no connected component isomorphic to $K_2$.  If $k$ is the minimum integer such that ${k \choose d} \geq n_d$ for all $\delta(G) \leq d \leq \Delta(G)$, then $\chi_0(G) \in \{k, k+1\}$.
\item If $G$ is a simple graph with no more than one isolated vertex and no connected component isomorphic to $K_2$, then $\chi_0(G) \leq |V(G)| + 1$.
\end{enumerate}
\end{conj}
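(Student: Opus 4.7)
My plan would be to attack the two parts separately, since part (i) is a tightness statement while part (ii) is a universal bound and is likely considerably easier. The lower bound $k \leq \chi_0(G)$ in part (i) is immediate: each vertex of degree $d$ must be assigned a distinct $d$-subset of the palette, forcing $\binom{k}{d} \geq n_d$ for every $d$ with $n_d > 0$. The real content is therefore the matching upper bounds $\chi_0(G) \leq k+1$ and $\chi_0(G) \leq |V(G)|+1$.

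For part (ii), I would start from a proper edge-coloring of $G$ guaranteed by Vizing's theorem using at most $\Delta(G)+1 \leq |V(G)|$ colors, reserving one extra color (number $|V(G)|+1$) that does not initially appear on any edge. I would then examine equivalence classes of vertices that share a color-set (``conflict classes'') and resolve them by local recolorings using the reserve color. The first challenge is ensuring each recoloring preserves properness; the standard Vizing fan / Kempe-chain swap technology is the natural tool. The subtler difficulty is containing the cascade effect, since a swap used to split one conflict may merge another. I would attempt to handle this by induction on the total number of conflict pairs, ordered by vertex degree (highest first, as those have the largest supply of available subsets), using a potential function that strictly decreases with each swap. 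As a fallback, a Lovász Local Lemma argument is available: refine the Vizing coloring by a uniformly random shift from the $|V(G)|+1$ colors and bound the probability of a pair of vertices having identical color-sets, with dependency controlled by the fact that only pairs with overlapping neighborhoods interact.

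For part (i) I would try to generalize the probabilistic scheme behind Theorem \ref{setirrstrength}. Burris and Schelp's own bound carries a $\Delta$-dependent constant; the conjectured bound removes this constant entirely. My strategy would be a two-stage random coloring: use a handful of ``anchor'' colors to distinguish the few high-degree vertices deterministically (feasible because $\binom{k}{d}$ is enormous when $d$ is large relative to $n_d$), then randomly extend to the remaining edges using all $k+1$ colors and apply the Local Lemma with bad events indexed by pairs of equal-degree vertices whose color-sets collide and by pairs of adjacent edges with equal colors. The probability of a set-collision event for a pair of degree-$d$ vertices is of order $1/\binom{k+1}{d}$, and the gap between $\binom{k+1}{d}$ and $n_d \leq \binom{k}{d}$ supplies the needed slack.

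The main obstacle will be the tightness in part (i): when $n_d$ is essentially $\binom{k}{d}$ for some $d$, the problem reduces to finding a near-perfect system of distinct $d$-subsets realized as link-sets of a proper edge-coloring, which is a Hall/design-type existence question with almost no probabilistic slack. In such extremal cases I expect a purely random approach to fail and a structural argument will be required---showing that the near-equality $n_d \approx \binom{k}{d}$ forces $G$ to be close to regular with a rigid degree profile, for which an explicit (perhaps algebraic or recursive) construction can be carried out. Reconciling the flexible probabilistic regime with these rigid extremal cases is, in my view, the crux of the conjecture.
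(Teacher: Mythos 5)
You should first note that the statement you are addressing is a \emph{conjecture} of Burris and Schelp, not a theorem of this survey: the paper offers no proof of either part, it only records that part (2) was later settled in the affirmative by Bazgan et al.\ \cite{Baz+}, while part (1) remains open. So there is no ``paper proof'' to match, and your text should be judged as a standalone proof attempt. As such it is a research plan, not a proof, and by your own admission the crux of part (1) --- the extremal regime $n_d \approx \binom{k}{d}$, where there is no probabilistic slack --- is left unresolved. A Local Lemma argument with collision probabilities of order $1/\binom{k+1}{d}$ cannot close a conjecture whose whole content is that $k+1$ colours suffice even when the counting bound is essentially tight; moreover your probability estimate ignores the properness constraint (the colour-set at a vertex is not a uniformly random $d$-subset once the colouring must be proper), and the dependency structure among collision events for vertices of the same degree is global, not neighbourhood-local, so the standard symmetric Local Lemma setup you describe does not apply as stated.

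Part (2) has the same difficulty in a sharper form. Your fallback LLL argument breaks down exactly where the bound $|V(G)|+1$ is tight: a graph may have close to $|V(G)|$ vertices of degree $1$, whose colour-sets are singletons drawn from only $|V(G)|+1$ colours, so there is essentially no slack and no room for a union-bound or Local Lemma computation; this regime forces a structural, essentially exact argument (which is what Bazgan et al.\ supply, and what your sketch does not). Your primary route --- Vizing colouring plus one reserve colour, Kempe-type swaps, and a potential function that ``strictly decreases with each swap'' --- names the right obstacle (cascading conflicts) but supplies no candidate potential and no argument that a swap splitting one conflict class cannot create more conflicts than it removes; without that, the induction has no base to stand on. The correct lower-bound observation $\binom{k}{d} \geq n_d \Rightarrow k \leq \chi_0(G)$ is fine, but everything beyond it is programmatic, so the attempt should not be regarded as a proof of either part.
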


The second conjecture was answered in the affirmative by Bazgan et al. \cite{Baz+}.



Ba\v{c}a, Jendrol', Miller, and Ryan introduced the {\bf total edge irregularity strength} of a graph $G$ in \cite{BJMR}, which is defined to be the smallest $k$ for which there is a total $k$-weighting $w$ such that $w(u) + w(v) + w(uv) \neq w(x) + w(y) + w(xy)$ for any distinct $uv, xy \in E(G)$.  In our notation, this parameter is ${\textup{s}'}_\Sigma^t(G)$.  They show that if $G$ has $m$ edges, then $\left\lceil\frac{m+2}{3}\right\rceil \leq {\textup{s}'}_\Sigma^t(G) \leq m$.  They also make the following conjecture:

\begin{conj}[Ba\v{c}a, Jendrol', Miller, Ryan \cite{BJMR}]
If $G \ncong K_5$, then $${\textup{s}'}_\Sigma^t(G) = \max\left\{ \left\lceil \dfrac{\Delta(G)+1}{2} \right\rceil, \left\lceil\dfrac{|E(G)|+2}{3}\right\rceil \right\}.$$
\end{conj}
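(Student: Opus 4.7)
The plan is to first verify the lower bound and then attack the upper bound by a structured greedy construction. For the lower bound, set $k={\textup{s}'}_\Sigma^t(G)$ and observe that any admissible total $k$-weighting produces edge sums in the integer interval $[3,3k]$, which has only $3k-2$ elements; since these sums are pairwise distinct, $|E(G)|\leq 3k-2$, giving $k\geq\lceil(|E(G)|+2)/3\rceil$. Independently, at any vertex $v$ of degree $\Delta(G)$ the $\Delta(G)$ values $w(u_i)+w(vu_i)$ over the neighbours $u_i$ of $v$ must be distinct elements of $[2,2k]$ (since the edge sums incident to $v$ share the term $w(v)$), forcing $\Delta(G)\leq 2k-1$ and hence $k\geq\lceil(\Delta(G)+1)/2\rceil$.

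For the (much harder) upper bound, let $k=\max\{\lceil(\Delta(G)+1)/2\rceil,\lceil(|E(G)|+2)/3\rceil\}$. The idea is to commit to a pre-computed target sum $t_\ell\in[3,3k]$ for each edge $e_\ell$, for instance the natural bijection $e_\ell\mapsto\ell+2$ induced by a suitable edge ordering, and then show that vertex weights and edge weights in $[1,k]$ can be chosen so that $w(u)+w(v)+w(e_\ell)=t_\ell$ for every $e_\ell=uv$. To carry this out I would fix a vertex ordering $v_1,\ldots,v_n$ (a degeneracy ordering, or BFS from a maximum-degree vertex), order the edges so that those whose larger endpoint is $v_j$ form a contiguous block with the blocks arranged by increasing $j$, and process the edges greedily: at each step choose $w(e_\ell)\in[1,k]$ (and possibly the weight of a not-yet-committed endpoint) so that $c(e_\ell)$ hits a still-available target. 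A cleaner alternative is to encode feasibility as a bipartite matching between edges and target sums, or as a polynomial non-vanishing statement amenable to the Combinatorial Nullstellensatz approach surveyed in Section \ref{ch:intro:var3}.

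The main obstacle is the tight case where both bounds are nearly saturated simultaneously: at every vertex of degree $\Delta(G)$ the pair-sums $w(u_i)+w(vu_i)$ must exhaust almost all of the $2k-1$ available values in $[2,2k]$, and simultaneously the global edge sums must nearly tile $[3,3k]$ when $|E(G)|$ is close to $3k-2$. Coordinating these two local/global near-equalities across all high-degree vertices, while respecting that each edge weight is shared between its two endpoints, is precisely what has kept the conjecture open beyond special classes such as trees, complete graphs, and cubic graphs; it also explains the sporadic exception $K_5$, where the two counting constraints coincide pathologically.
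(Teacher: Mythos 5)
The statement you are trying to prove is an open conjecture: the survey does not prove it, it only records it (due to Ba\v{c}a, Jendrol', Miller and Ryan) together with the partial confirmations known to date --- graphs with $|E(G)| > 111\,000\,\Delta(G)$, random graphs, graphs that are not stars (with a weaker bound), and sparse graphs with $|E(G)| \leq \tfrac{3}{2}|V(G)| - 1$. So no complete argument along the lines you sketch exists in the literature, and your proposal should not be read as a proof of the equality. Your lower-bound paragraph is fine and is exactly the standard counting argument from the original paper: the $|E(G)|$ edge sums are distinct integers in $[3,3k]$, giving $k \geq \lceil (|E(G)|+2)/3 \rceil$, and at a vertex of maximum degree the quantities $w(u_i)+w(vu_i)$ are distinct integers in $[2,2k]$, giving $k \geq \lceil (\Delta(G)+1)/2 \rceil$.

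The genuine gap is the upper bound, which is the entire content of the conjecture, and your plan does not close it. Prescribing target sums $t_\ell = \ell + 2$ and assigning weights greedily fails concretely: once both endpoints of an edge $e_\ell = uv$ have had their weights committed by earlier edges, the only free parameter is $w(e_\ell)$, and the required value $t_\ell - w(u) - w(v)$ need not lie in $[1,k]$; conversely, an edge ordering in which endpoints are never both committed in advance does not exist for graphs with cycles. Reformulating feasibility as a bipartite matching between edges and target sums, or as a Combinatorial Nullstellensatz non-vanishing statement, merely restates the problem --- one still has to exhibit a system of distinct representatives, or a nonzero coefficient of the relevant monomial, and no such argument is known in general; this is precisely where the known partial results (e.g.\ the proof for $|E(G)| > 111\,000\,\Delta(G)$, which uses much heavier machinery) stop short of the full conjecture. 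Your closing paragraph in fact concedes that the coordination of the local constraint at high-degree vertices with the global packing of $[3,3k]$ is unresolved, so what you have is a correct lower bound plus a proof strategy, not a proof. Incidentally, your gloss on $K_5$ is also not an argument: one must actually verify that ${\textup{s}'}_\Sigma^t(K_5) = 5$ exceeds the formula's value $4$, which is a separate finite check, not a consequence of the two counting bounds ``coinciding.''
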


This conjecture was recently confirmed in \cite{BMR} for graphs with $|E(G)| > 111\,000\Delta(G)$ and for a random graph from $\mathcal{G}(n,p)$ for any $p = p(n)$.  It is also shown in \cite{BMR} that ${\textup{s}'}_\Sigma^t(G) \leq \left\lceil \frac{|E(G)|}{2}\right\rceil$ for any graph $G$ which is not a star.  The conjecture is also verified in \cite{BMRRR} for any graph $G$ having $|E(G)| \leq \frac{3}{2}|V(G)| - 1$ and $\Delta(G) \leq \left\lceil\frac{|E(G)|+2}{3}\right\rceil$.

Ba\v{c}a et al. \cite{BJMR} also introduce the {\bf total vertex irregularity strength} of a graph $G$, denoted ${\textup{s}}_\Sigma^t(G)$, which is the smallest $k$ such that $G$ has a total $k$-weighting $w$ such that $w(u) + \sum_{e \ni u}w(e) \neq w(v) + \sum_{e \ni v} w(e)$ for any distinct $u,v \in V(G)$.  The best known bound on ${\textup{s}}_\Sigma^t(G)$ was given by Anholcer, Kalkowski, and Przyby{\l}o \cite{AKP}, who showed that ${\textup{s}}_\Sigma^t(G) \leq 3\left\lceil n/\delta \right\rceil + 1$ for a graph on $n$ vertices with minimum degree $\delta(G) = \delta$.  For a survey of results on total edge irregularity strength and total vertex irregularity strength, see Section 7.17 in \cite{survey}.

Chartrand, Lesniak, VanderJagt, and Zhang \cite{CLVZ} have very recently considered vertex weightings for which the multiset of colours of the vertices in $N(u)$ differs from the multiset of colours of the vertices of $N(v)$ for any distinct vertices $u$ and $v$.  The smallest $k$ such that a graph $G$ has such a vertex $k$-weighting is denoted ${\textup{s}}_m^v(G)$.  Such weightings are also known as {\bf recognizable colourings} and ${\textup{s}}_m^v(G)$ is the {\bf recognition number} of $G$.  Recognition numbers are determined in \cite{CLVZ} for a number of classes of graphs.

An {\bf edge-distinguishing vertex weighting} of a graph $G$ is a weighting $w:V(G) \to [k]$ such that $\{w(u),w(v)\} \neq \{w(x),w(y)\}$ for any two distinct edges $uv, xy \in E(G)$.  Such weightings were introduced by Frank, Harary and Plantholt \cite{FHP}, where they were called {\bf line-distinguishing vertex colourings}.  The smallest value of $k$ for which a graph $G$ has an edge-distinguishing vertex-weighting is called the {\bf line-distinguishing chromatic number}, and is denoted ${\textup{s}'}_{s}^v(G)$ (note that $\{w(u),w(v)\} \neq \{w(x),w(y)\}$ as sets if and only if they are distinct as multisets, and hence we could also use ${\textup{s}'}_{m}^v(G)$).  Harary and Plantholt \cite{HP83} conjectured that ${\textup{s}'}_{m}^v(G) \geq \chi'(G)$ for every graph $G$, which was proven by Salvi \cite{Salvi}.

%

\section{Summary}

Although the ultimate goal of showing that $\Se(G) \leq 3$ remains out of reach for the moment, a good deal of progress has been made in many of the natural generalizations.  A summary of the major results contained in the thesis are found in Table \ref{summaryofbounds}.  In each case, assume that $G$ is a nice graph if necessary, and that $D$ is a digraph.  Not every bound is explicitly stated; for instance, since $\We(G) \leq \me(G)$ and no bound for $\We(G)$ is known other than the best known bound on $\me(G)$, $\We(G)$ is omitted from the table. \\

\begin{table}[htb]
\begin{center}
	\begin{tabular}{| c | c | c | c |}
    	\hline
    	Parameter & Conjectured upper bound & Known upper bound & Reference   \\ \hline
	$\Se(G)$ & 3 & 5 & \cite{KKP1} \\
	$\me(G)$ & 3 & 4 & \cite{AADR05} \\
	$\Pe(G)$ & 3 & 5 & \cite{SK08} \\
	$\se(G)$ & -- & $\left\lceil \log_2\chi(G)\right\rceil + 1 $ & \cite{GP09} \\
	\hline
	$\St(G)$ & 2 & 3 & \cite{Kal} \\
	$\Pt(G)$ & 2 & 3 & \cite{SK08} \\
	\hline
	$\chSe(G)$ & 3 & $2\Delta(G)+1$ &  \cite{mythesis, Ben-CN} \\
	$\chSt(G)$ & 2 & $\left\lceil \frac{2}{3}\Delta(G) \right\rceil + 1$ & \cite{mythesis, Ben-CN} \\
	$\chPe(G)$ & -- & $4\Delta(G)+2$ & \cite{mythesis, Ben-CN} \\
	$\chwe(G)$ & 2 & 2 & \cite{Ben-LLL} \\
	\hline
	$\chSe(D)$ & 2 & 2 & \cite{BGN09}, \cite{Ben2} \\
	\hline
	$\chSv(G)$ & $\Sv(G)$, $\chi(G)$ & $\Delta(G)^2 + 1$ & \cite{mythesis, Ben-CN} \\
	\hline
	$\eSt(G)$ & -- & $\frac{\Delta(G)}{2} + O(\sqrt{\Delta(G)\log{\Delta(G)}})$ & \cite{BBRS} \\
	\hline
	\end{tabular}
  \caption{Summary of derived colouring parameter values}
  \label{summaryofbounds}
\end{center}
\end{table}

\section{Acknowledgements}

Much of this work was compiled during the writing of the author's doctoral thesis.  As such, a great debt is owed to Brett Stevens for his supervisory role, Carleton University for their institutional support, and to the Natural Sciences and Engineering Research Council of Canada and the Fonds de recherche Qu\'ebec - Nature et technologies for their financial support.

\bibliographystyle{plain}
\bibliography{references}

\end{document}